\def\normo#1{\left\|#1\right\|}
\def\aabs#1{\left|#1\right|}
\def\brk#1{\left(#1\right)}
\def\half#1{\frac{#1}{2}}
\def\norm#1{\|#1\|}
\def\wt#1{\widetilde{#1}}
\newcommand{\T}{{\mathbb T}}
\newcommand{\R}{{\mathbb R}}
\newcommand{\C}{{\mathbb C}}
\newcommand{\Z}{{\mathbb Z}}
\newcommand{\ft}{{\mathcal{F}}}
\newcommand{\les}{{\lesssim}}
\newcommand{\ges}{{\gtrsim}}
\newcommand{\sgn}{{\mbox{sgn}}}
\def\half#1{\frac{#1}{2}}
\def\norm#1{\|#1\|}
\def\normo#1{\left\|#1\right\|}
\def\wt#1{\widetilde{#1}}
\def\aabs#1{\left|#1\right|}
\def\ve#1{\mathbf{#1}}
\newcommand{\F}{\mathcal{F}}
\newcommand{\cir}{\mathbb{S}}
\newcommand{\e}{\varepsilon}
\newcommand{\Des}{\Delta_\theta}
\newcommand{\p}{\partial}
\newcommand{\I}{\infty}
\newcommand{\Del}[1]{}
\numberwithin{equation}{section}
\newtheorem{thm}{Theorem}[section]
\newtheorem{lem}[thm]{Lemma}
\theoremstyle{remark}
\newtheorem{rem}{Remark}
\theoremstyle{remark}
\theoremstyle{definition}
\begin{document}
\subjclass[2010]{35L70, 35Q55}
\keywords{Angular regularity, derivative Schr\"odinger equation, Schr\"odinger map}

\title[2D cubic derivative Schr\"odinger equation]{Spherically averaged maximal function and scattering for the 2D cubic derivative Schr\"odinger equation
}
\author[Z. Guo]{Zihua Guo}
\address{School of Mathematical Sciences, Monash University, VIC 3800, Australia \& LMAM, School of Mathematical Sciences, Peking
University, Beijing 100871, China}
\email{zihua.guo@monash.edu}

\begin{abstract}
We prove scattering for the 2D cubic derivative Schr\"odinger
equation with small data in the critical Besov space with one degree angular regularity.
The main new ingredient is that we
prove a spherically averaged maximal function estimate for the 2D
Schr\"odinger equation. We also prove a global well-posedness result
for the 2D Schr\"odinger map in the critical Besov space with one degree angular regularity.
The key ingredients for the latter results are the spherically averaged maximal function estimate,
null form structure observed in \cite{Bej}, as well as the
generalised spherically averaged Strichartz estimates obtained in \cite{Guo2} in order to exploit the null form structure.
\end{abstract}

\maketitle

\tableofcontents

\section{Introduction}

In this paper, we study the Cauchy problem for the cubic derivative
Schr\"odinger equation
\begin{align}\label{eq:cDNLS}
\begin{cases}
i\partial_tu+\Delta u=|u|^2\ve{a}\cdot \nabla u+u^2\ve{b}\cdot \nabla \bar u,\\
u(x,0)=u_0(x)
\end{cases}
\end{align}
where $u(x,t):\R^n\times\R \to \C$, $\ve{a}, \ve{b}\in \C^2$. The
equation \eqref{eq:cDNLS} arises from the strongly interacting
many-body systems near criticality as recently described in terms of
nonlinear dynamics \cite{CT}. The Schr\"odinger equation with
derivative in the nonlinearity of the form
\begin{align}\label{eq:DNLS}
i\partial_tu+\Delta u=F(u,\bar u, \nabla u,\nabla \bar u),
\end{align}
has been studied extensively, e.g. see the introduction of
\cite{KPV,Wang1} for the history of the study. Besides equation
\eqref{eq:cDNLS} and the well-known one-dimension derivative
Schr\"odinger equation, \eqref{eq:DNLS} contains another important
model known as the Schr\"odinger maps
\begin{align}\label{eq:Schmap}
\p_t s=s\times \Delta_x s,\quad s(0)=s_0,
\end{align}
where $s:\R^n\times \R\to \cir^2\hookrightarrow \R^3$. It was known
that under stereographic projection (see Section 5 below)
\eqref{eq:Schmap} is equivalent to
\begin{align}\label{eq:Schmap2}
i\partial_tu+\Delta u=\frac{2\bar
u}{1+|u|^2}\sum_{i=1}^n(\p_{x_i}u)^2,
\end{align}
and we see that the cubic term $\bar u (\p_{x_i}u)^2$ serves as the
first term in the Taylor expansion of the above nonlinear term.

Note that the equation \eqref{eq:cDNLS} is invariant under the
following scaling transform: for $\lambda>0$
\[u(x,t)\to \lambda^{1/2} u(\lambda x,\lambda^2t),\quad u_0(x)\to \lambda^{1/2} u_0(\lambda x),\]
then we see that the critical space for \eqref{eq:cDNLS} is $\dot
H^{\frac{n-1}{2}}$ in the sense of scaling. Because of the loss of
the derivative, the usual Strichartz analysis as for the power type
nonlinearity doesn't work for \eqref{eq:cDNLS}. One needs some
estimates with stronger smoothing effect. Kenig, Ponce, and Vega
\cite{KPV} introduced for the first time a method to obtain local
well-posedness for general derivative Schr\"odinger equations. This
method combines ``local smoothing estimates", ``inhomogeneous local
smoothing estimates", which give the crucial gain of one derivative,
and ``maximal function estimates". In the study of Schr\"odinger
map, Ionescu and Kenig \cite{IK,IK2} introduced the anisotropic
local smoothing and maximal function estimates for Schr\"odinger
equation. It was proved in \cite{IK2} that the following local
smoothing estimates hold
\begin{align*}
\norm{e^{it\Delta}P_{k,\ve e_1}f}_{L_{x_1}^\infty L_{\bar x,
t}^2}\les& 2^{-k/2}\norm{f}_2,\\
\normo{\int_0^te^{i(t-s)\Delta}P_{k,\ve e_1}g}_{L_{x_1}^\infty
L_{\bar x, t}^2}\les& 2^{-k}\norm{g}_{L_{x_1}^1 L_{\bar x, t}^2},
\end{align*}
where $P_{k,\ve e_1}f=\ft^{-1}1_{|\xi_1|\sim 2^k}1_{|\xi|\sim
2^k}\hat f$ (roughly, see Section 2 for the definition). In order to
apply these estimates to deal with the cubic nonlinear terms, the
following maximal function appears naturally
\begin{align}\label{eq:max}
\norm{e^{it\Delta}P_{k}f}_{L_{x_1}^2 L_{\bar x, t}^\infty}\les&
2^{(n-1)k/2}\norm{f}_2.
\end{align}
It was proved in \cite{IK2} that \eqref{eq:max} holds if $n\geq 3$.
These estimates played key roles in the consequent study of
Schr\"odinger map, e.g. in \cite{BIKT}. For \eqref{eq:cDNLS}, in
three dimensions and higher, one could gather these estimates to
obtain global well-posedness and scattering in the critical Besov
space, see \cite{ywang} which also generalized the estimates and
results to the non-elliptic case. In \cite{Wang1,Wang2} these
estimates were generalized to the modulation space and sharp global
well-posedness in modulation spaces for \eqref{eq:cDNLS} (also in
the non-elliptic case) with $n\geq 3$ were obtained.

However, if $n=2$, \eqref{eq:max} fails. Thus the cubic nonlinear
term in two dimensions is more difficult. To the author's knowledge,
there are two approaches to deal with this difficulty. The first one
was developed in \cite{BIKT} which uses the Galilean invariance of
the Schr\"odinger propagator. The space $L_{x_1}^2L_{x_2,t}^\infty$
is replaced with a sum of Galilean transforms of it. The idea of
using such sums of spaces as substitutes is due to Tataru
\cite{Tataru}. The space is defined for any finite time interval
$[-T,T]$, but the estimates in \cite{BIKT} are independent of $T$.
The second one was developed in \cite{Wang1} which proves the
following estimate via the Gabor frame representation of linear
Schr\"odinger solution: for $1\leq r<2$
\begin{align*}
\norm{e^{it\Delta}P_{0}f}_{L_{x_1}^2 L_{\bar x, t}^\infty}\les&
\norm{f}_r.
\end{align*}
Then with this well-posedness and scattering for \eqref{eq:cDNLS}
with $n=2$ were proved for suitable data in some modulation space.

In this paper, we take an another approach. Our ideas are inspired
by \cite{Tao} and the recent work \cite{Guo}. First, since
\eqref{eq:max} only fails ``logarithmically" for $n=2$, we find that
the spherically averaged maximal function estimate holds. This is
like the spherically averaged endpoint Strichartz estimate for the
2D Schr\"odinger equation that was studied in \cite{Tao}. Note that,
the Strichartz space $L_t^2L_x^\infty$ is rotational invariant,
however the anisotropic space $L_{x_1}^2 L_{\bar x, t}^\infty$ is
not. It is a bit surprising that we have
\begin{thm}\label{thm:max}
There exists $C>0$ such that for $k\in \Z$, $u_0\in L^2(\R^2)$, one
has
\begin{align}
\norm{e^{it\Delta}P_k u_0}_{L^2_{x_1}L_{x_2,t}^\infty L_\theta^2}
\leq C 2^{k/2}\norm{u_0}_2.
\end{align}
\end{thm}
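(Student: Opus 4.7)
The plan is to reduce, via angular Fourier decomposition and parabolic scaling, to a uniform single-mode maximal estimate, and then to prove the latter using Bessel function asymptotics together with the Schr\"odinger time oscillation. First, the Schr\"odinger scaling $(x,t)\mapsto(2^k x,2^{2k}t)$, under which $\|e^{it\Delta}P_k u_0\|_{L^2_{x_1}L^\infty_{x_2,t}L^2_\theta}$ scales as $2^{-k/2}$ and $\|u_0\|_2$ as $2^{-k}$ (while the frequency rescales from $2^k$ to $1$), leaves the claimed inequality invariant, so it suffices to treat $k=0$. Expanding $u_0=\sum_{m\in\Z}u_{0,m}$ into its angular Fourier modes $u_{0,m}(r,\theta)=g_m(r)e^{im\theta}$ gives $\|u_0\|_2^2=\sum_m\|u_{0,m}\|_2^2$ by Parseval in $\theta$, and because $e^{it\Delta}$ commutes with rotations, each $v_m:=e^{it\Delta}P_0 u_{0,m}=e^{im\theta_x}h_m(r,t)$ stays in mode $m$ and $|v_m(x,t)|$ is radial in $x$.

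Interpreting the innermost $L^2_\theta$ norm of $v=\sum_m v_m$ as the $\ell^2$ aggregate $\|v\|_{L^2_\theta}^2(x,t)\simeq\sum_m|v_m(x,t)|^2$ (the modes being orthogonal in $\theta$), the elementary scalar inequality $\sup_y(\sum_m|a_m(y)|^2)^{1/2}\le(\sum_m\sup_y|a_m(y)|^2)^{1/2}$ together with Fubini yields
\[
  \|e^{it\Delta}P_0 u_0\|^2_{L^2_{x_1}L^\infty_{x_2,t}L^2_\theta}\;\lesssim\;\sum_{m\in\Z}\|v_m\|^2_{L^2_{x_1}L^\infty_{x_2,t}}.
\]
It therefore suffices to establish the \emph{uniform-in-$m$} single-mode bound
\[
  \|v_m\|_{L^2_{x_1}L^\infty_{x_2,t}}\;\lesssim\;\|u_{0,m}\|_2,\qquad m\in\Z,
\]
and sum in $m$.

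For the single-mode estimate, the radial profile admits the Hankel representation $h_m(r,t)=\int_0^\infty e^{-it\rho^2}J_m(r\rho)\tilde g_m(\rho)\rho\,d\rho$, where $\tilde g_m$ is the Hankel transform of $g_m$, supported in $\rho\sim 1$ after $P_0$-localization, with $\|\tilde g_m\|_{L^2(\rho\,d\rho)}\sim\|u_{0,m}\|_2$. Since $|v_m|$ is radial,
\[
  \|v_m\|^2_{L^2_{x_1}L^\infty_{x_2,t}}=2\int_0^\infty\sup_{r\ge s,\,t}|h_m(r,t)|^2\,ds,
\]
which I would analyze using the classical Bessel asymptotics: $J_m(z)$ is essentially exponentially small for $z\ll|m|$, of size $|m|^{-1/3}$ in the transition region $z\sim|m|$, and of size $z^{-1/2}$ with oscillatory phase $\pm(z-|m|\pi/2-\pi/4)$ for $z\gg|m|$. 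The near region $r\ll|m|$ contributes only an exponentially small term; the far region $r\gg|m|$ is handled by stationary phase in $\rho$, combining the $z^{-1/2}$ Bessel oscillation with $e^{-it\rho^2}$.

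The principal obstacle is the Bessel transition region $r\sim|m|$: the pointwise bound $|J_m|\lesssim|m|^{-1/3}$ by itself costs a factor of $|m|^{1/3}$ when integrated in $s$ across a window of length $\sim|m|$. Overcoming this loss requires genuinely exploiting the Schr\"odinger time oscillation $e^{-it\rho^2}$ alongside the Bessel structure, e.g.\ via a $TT^*$ / square-function argument or by invoking the generalised spherically averaged Strichartz estimates of \cite{Guo2}. This is precisely the mechanism behind Tao's spherically averaged endpoint Strichartz estimate \cite{Tao}, where angular averaging exactly compensates for the logarithmic failure of the standard 2D endpoint.
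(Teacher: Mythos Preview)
Your overall strategy---scaling to $k=0$, angular Fourier decomposition, the Minkowski reduction to a uniform-in-$m$ single-mode maximal bound, and the Hankel representation---is correct and is exactly the route the paper takes (Step~2 of Lemma~\ref{lem:max2d}). The gap is that you stop at precisely the hard step: you correctly flag the transition region $r\sim|m|$ as the obstacle and name $TT^*$ as the tool, but you do not carry it out, and the ingredients you gesture at do not close the argument. The pointwise bound $|J_m|\lesssim|m|^{-1/3}$ is too crude; what the paper actually uses is the sharper uniform estimate $|J_m(r)|\lesssim(1+|r^2-m^2|)^{-1/4}$, and then a $TT^*$ argument on $f\mapsto\int e^{-it\rho^2}J_m(r\rho)\chi_0(\rho)f(\rho)\,d\rho$ in $L^\infty_{r\sim m,\,t}$ whose key kernel bound is
\[
\sup_{r,r'\sim m}\int_{\rho\sim 1}(1+|r^2\rho^2-m^2|)^{-1/4}(1+|r'^2\rho^2-m^2|)^{-1/4}\,d\rho\ \lesssim\ m^{-1}.
\]
This yields $\|T_m f\|_{L^\infty_{r\sim m,t}}\lesssim m^{-1/2}\|f\|_2$, which exactly cancels the factor $m^{1/2}$ coming from the $L^2_{x_1}$ integration over $|x_1|\lesssim m$. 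Without this specific Bessel input and kernel computation, the single-mode estimate does not follow.

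Your two proposed fallbacks do not rescue the argument: the spherically averaged Strichartz estimates of \cite{Guo2} live in isotropic $L^p_tL^q_xL^r_\theta$ norms and say nothing about the anisotropic maximal norm $L^2_{x_1}L^\infty_{x_2,t}$, and Tao's endpoint argument \cite{Tao} is for $L^2_tL^\infty_x$, again a different norm. Also, your one-line treatment of the far region $r\gg|m|$ (``stationary phase in $\rho$'') is too optimistic: the paper needs the precise Bessel phase $\theta(r)=(r^2-m^2)^{1/2}-m\arccos(m/r)-\pi/4$ and a second $TT^*$ argument with a case split $|x|\sim|x'|$, $|x|\gg|x'|$, $|x|\ll|x'|$ (plus a dyadic decomposition in $\rho/x$) to obtain the required $L^2_{x_1}$ bound. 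In short, the skeleton is right and matches the paper, but the two $TT^*$ computations that constitute the actual content of the proof are missing.
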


See Section 2 for the definition of the space
$L^2_{x_1}L_{x_2,t}^\infty L_\theta^2$ and $P_k$. Then we use an
argument of \cite{ywang} (which is in the spirit of \cite{BIKT}) to
derive the corresponding inhomogeneous estimate. To use this norm to
the equation \eqref{eq:cDNLS}, as in \cite{Guo} we assume sufficient
regularity on the sphere variable such that the space on the sphere
is an algebra. Not like the Strichartz space, the local
smoothing/maximal function space is anisotropic in $x$ which makes
it not very compatible with the spherical average. For example, we
do not have compare between $L^2_{x_1}L_{x_2,t}^\infty L_\theta^2$
and $L^2_{x_1}L_{x_2,t}^\infty$. Fortunately, we can still close the
iteration arguments in these spaces. We show

\begin{thm}\label{thm:2cDNLS}
Assume $n=2$, $u_0\in \dot B_{2,1,\theta}^{1/2,1}$ with
$\norm{u_0}_{\dot B_{2,1,\theta}^{1/2,1}}=\e_0\ll 1$. Then there
exists a unique global solution $u$ to \eqref{eq:cDNLS} such that
$\norm{u}_{F^{1/2}}\les \e_0$. Moreover, the map $u_0\to u$ is
Lipshitz from $\dot B_{2,1,\theta}^{1/2,1}$ to $C(\R;\dot
B_{2,1,\theta}^{1/2,1})$, and scattering holds in this space.
\end{thm}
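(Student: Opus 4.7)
The plan is to run a Picard iteration in a resolution space $F^{1/2}$ adapted to the anisotropic maximal function norm of Theorem \ref{thm:max}, equipped additionally with a dyadic decomposition in the angular variable to accommodate the $L^2_\theta$ factor appearing in that norm. For each spatial frequency $2^k$, each angular frequency $2^\ell$, and each direction $\ve{e}_j$, the localised piece $P_k\Omega_\ell u$ will be measured simultaneously in the energy norm $L^\infty_t L^2_x$, in the Ionescu--Kenig local smoothing norm $L^\infty_{x_j}L^2_{\bar x,t}$, in the spherically averaged maximal function norm $L^2_{x_j}L^\infty_{\bar x,t}L^2_\theta$ of Theorem \ref{thm:max}, and in the standard Strichartz norms. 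These pieces are then aggregated with $\ell^1_k \ell^1_\ell$ weights $2^{k/2}2^\ell$ matching the scale-invariant Besov norm $\dot B^{1/2,1}_{2,1,\theta}$ of the data.

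For the homogeneous linear flow $e^{it\Delta}u_0$, each ingredient norm is controlled by $\|u_0\|_{\dot B^{1/2,1}_{2,1,\theta}}$: energy conservation and Strichartz are classical; the local smoothing pieces come from the Ionescu--Kenig estimates recalled in the introduction; and the maximal function pieces come from Theorem \ref{thm:max} applied to each angular Littlewood--Paley piece. For the Duhamel contribution, the dual local smoothing estimate supplies the crucial gain of one derivative, while the inhomogeneous counterpart of Theorem \ref{thm:max} is obtained from the Galilean substitute-of-spaces construction of \cite{BIKT}, in the variant used in \cite{ywang}.

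The core nonlinear step is a trilinear estimate of Schr\"odinger-source type for the nonlinearity $|u|^2\ve{a}\cdot\nabla u+u^2\ve{b}\cdot\nabla\bar u$. After frequency decomposition, one places the factor carrying the derivative, which by symmetry may be taken of the highest frequency $2^{k_{\max}}$, into the dual local smoothing space $L^1_{x_j}L^2_{\bar x,t}$, so that the $2^{-k_{\max}}$ gain supplied by the inhomogeneous local smoothing estimate exactly absorbs the derivative; the remaining two factors are put into the spherically averaged maximal function space of Theorem \ref{thm:max}. Dyadic summation in the spatial frequency produces no logarithmic loss provided one has a good enough H\"older inequality in the angular variable.

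The main obstacle is precisely this angular H\"older step. The maximal function norm carries only $L^2_\theta$ on $\cir$, so the product of two such factors is not in a useful $\theta$-space and a product of three such factors needs at least one $L^\infty_\theta$ ingredient. This is where the one degree of angular regularity enters: since $H^s(\cir)\hookrightarrow L^\infty(\cir)$ for $s>1/2$ and the $\ell^1$ summation over angular frequencies supplies exactly this gain, $\dot B^{1/2,1}_{2,1,\theta}$ is an algebra on the sphere. Because the spatial norms are anisotropic in $x$ (and in particular Theorem \ref{thm:max} is not rotationally invariant), one cannot simply quote the algebra property in a rotationally invariant framework; instead, one performs the H\"older inequality in $\theta$ pointwise in $(x,t)$, then takes the outer $L^2_{x_j}L^\infty_{\bar x,t}$ or $L^1_{x_j}L^2_{\bar x,t}$ norms, and finally sums angular dyadic pieces in $\ell^1_\ell$. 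Once this angular bookkeeping is set up, the trilinear estimate closes.

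Contraction in a ball of radius $2C\varepsilon_0$ in $F^{1/2}$ then yields a unique global solution together with Lipschitz dependence on the data. Scattering in $\dot B^{1/2,1}_{2,1,\theta}$ is a direct corollary: the same trilinear estimate gives global integrability of the Duhamel source, so $e^{-it\Delta}u(t)$ is Cauchy in the critical Besov space as $t\to\pm\infty$.
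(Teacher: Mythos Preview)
Your overall strategy is correct and matches the paper: build a resolution space from energy, local smoothing, the spherically averaged maximal function of Theorem~\ref{thm:max}, and Strichartz norms; use angular regularity to upgrade $L^2_\theta$ to $L^\infty_\theta$ where needed; close by Picard iteration. Your angular Littlewood--Paley bookkeeping $(\Omega_\ell,\ell^1_\ell)$ is a harmless variant of the paper's choice of tracking $u$ and $\partial_\theta u$ directly.

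There is, however, a real gap in your trilinear mechanism. You propose to place the derivative factor in the local smoothing space and the other two in the maximal space, then H\"older directly into the source space $L^{1}_{x_j}L^2_{\bar x,t}$. The problem is a direction mismatch: the local smoothing bound on $P_{k_3}\partial w$ only holds after the angular localisation $P_{k_3,\ve e}$, i.e.\ in the \emph{specific} direction $\ve e$ aligned with the frequency, whereas the inhomogeneous maximal function estimate (Lemma~\ref{lem:max2din}) forces the source norm to be $\sup_{\ve e}\|\cdot\|_{L^{1,2}_{\ve e}}$. A direct triple H\"older cannot supply the bound uniformly in $\ve e$, and your phrase ``H\"older in $\theta$ pointwise in $(x,t)$'' does not resolve this because the anisotropic $L^{p,q}_{\ve e}$ norms are not comparable across directions.

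The paper's fix is to insert an intermediate \emph{bilinear} $L^2_{t,x}$ estimate (Lemma~\ref{lem:L2est}): one writes $\|P_{k_1}u\,P_{k_2}\bar v\|_{L^2_{t,x}}=\|P_{k_1}A_\beta u\cdot P_{k_2}A_\beta\bar v\|_{L^2_{t,x}L^2_\beta}$ using the rotation invariance of $L^2_{t,x}$, then does H\"older in $\beta$ with one factor in $L^\infty_\beta$ (via $H^1_\theta\hookrightarrow L^\infty_\theta$) and the other---the local smoothing factor---in $L^2_\beta$. Because the output lives in the rotation-invariant space $L^2_{t,x}$, the direction dependence disappears; the third factor is then paired via H\"older in whatever direction $\ve e$ is demanded by the source norm. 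This is also why the paper's $F_k$ carries the \emph{spherically averaged} local smoothing component $\|P_{j,\ve e}(A_\beta f)\|_{L^{\infty,2}_{\ve e}L^2_\beta}$ rather than the bare Ionescu--Kenig norm you propose.

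A secondary omission: the paper's trilinear proof splits into three frequency-interaction cases and uses an additional $L^{6,3}_{\ve e}$/$L^{3/2,6/5}_{\ve e}$ pair for the high--high$\to$medium regime; your sketch describes only the high--high$\to$high configuration. This is repairable (Strichartz $L^4$ together with the bilinear $L^2$ estimate suffices), but it does not come for free from the single estimate you describe.
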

\begin{rem}
In Theorem \ref{thm:2cDNLS}, $u_0\in \dot B_{2,1,\theta}^{1/2,1}$
means that $u_0\in \dot B_{2,1}^{1/2}$ and its spherical derivative
$\partial_\theta u_0\in \dot B_{2,1}^{1/2}$, and $\norm{u_0}_{\dot
B_{2,1,\theta}^{1/2,1}}=\norm{u_0}_{\dot
B_{2,1}^{1/2}}+\norm{\p_\theta u_0}_{\dot B_{2,1}^{1/2}}$. We do not
need $X^{s,b}$-structure for the proof of Theorem \ref{thm:2cDNLS},
and see Section 4 for the definition of
$F^s$. Note that if $u_0\in \dot B_{2,1}^{1/2}$ is radial, then $u_0
\in \dot B_{2,1,\theta}^{1/2,1}$. This is a bit surprising that for
radial data the problem is relatively simpler even though the radial
symmetry is not preserved under the flow of \eqref{eq:cDNLS}.
\end{rem}

Now we turn to the study of the Schr\"odinger map. It has also been
studied extensively (also in the case in which the sphere $\cir^2$
is replaced by more general targets). Based on variants of the
energy method, the local existence of the sufficiently smooth
solutions were obtained, even for large data, see, for example,
\cite{SSB,CS,Ding,KPST} and the references therein. Similarly as
\eqref{eq:cDNLS}, by the scaling we see the critical space for
\eqref{eq:Schmap} is $\dot H^{d/2}$. Local well-posedness were
obtained \cite{IK} for small data in $H^s_Q(\R^n:\cir^2)$,
$s>(n+1)/2$. This was improved to $s>n/2$ by Bejenaru \cite{Bej}.
Bejenaru observed for the first time in the setting of Schr\"odinger
maps, that the gradient part of the nonlinearity in
\eqref{eq:Schmap2} has a certain null structure. Global
well-posedness for small data in the critical Besov space in
dimensions $n\geq 3$ were obtained in \cite{IK2}, and independently
in \cite{Bej2}. Recently, global well-posedness for small data in
the critical Sobolev space were proved in \cite{BIK} first for
$n\geq 4$, and in \cite{BIKT} for $n\geq 2$ where some state of art
techniques were built. We revisit the case $n=2$ using the new
maximal function estimate. We prove

\begin{thm}\label{thm:Schmap}
Assume $n=2$, the Schr\"odinger map initial value problem
\eqref{eq:Schmap} is globally well-posed for small data $s_0\in
\dot{B}^{1,1}_Q(\R^2;\cir^2)$, $Q\in \cir^2$.
\end{thm}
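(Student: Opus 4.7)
The plan is to reduce the geometric Schr\"odinger map equation to the scalar derivative NLS \eqref{eq:Schmap2} via stereographic projection, and then to run a contraction argument in a suitable $F^1$-type resolution space built from the Strichartz estimates, the anisotropic local smoothing estimates of \cite{IK2}, the new spherically-averaged maximal function estimate of Theorem \ref{thm:max}, and the generalised spherically-averaged Strichartz estimates of \cite{Guo2}. Concretely, since $s_0 \in \dot B^{1,1}_Q$ is small about the base point $Q\in\cir^2$, stereographic projection from $-Q$ is well-defined on the relevant neighborhood and produces a scalar map $u_0 \in \dot B^{1/2,1}_{2,1,\theta}$-style space at the correct critical scaling (critical regularity $n/2=1$, with one degree of angular regularity), with comparable norm, as recorded in Section~5.

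After projection, I would Taylor expand the rational factor as
\EQ{\frac{2\bar u}{1+|u|^2}\sum_i(\p_i u)^2 = 2\bar u\sum_i(\p_i u)^2 - 2\bar u|u|^2\sum_i(\p_i u)^2 + \cdots,}
so that the leading term is the cubic $\bar u (\p_i u)^2$ and all further terms carry extra factors of $u$ with no extra derivatives. The higher-order terms are therefore subordinate in the iteration and can be handled by the same multilinear estimates used for the cubic model problem of Theorem \ref{thm:2cDNLS}, once the cubic term is controlled. The essential work is thus the cubic null-form term, for which I would invoke the null structure identified in \cite{Bej}: after an algebraic/integration-by-parts rewriting, $\bar u (\p_i u)^2$ splits into pieces whose high-high-to-low frequency interactions exhibit cancellation. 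Crucially, the generalised spherically-averaged Strichartz estimates from \cite{Guo2} were designed precisely to exploit this null structure, which is why they appear in the author's ingredient list.

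The function-space setup is modeled on the $F^1$ space of Section~4 used in Theorem \ref{thm:2cDNLS}, enriched so that the angular derivative $\p_\theta u$ also lives in an $F^1$-type norm; the one-degree angular regularity makes the space on $\cir^1$ an algebra, so that products of functions with $\p_\theta$-regularity remain controlled. The dyadic frequency pieces of $u$ are then estimated in a combination of: Strichartz norms $L^q_t L^r_x$ (which are rotation-invariant and happily absorb $\p_\theta$); the local smoothing norm $L^\infty_{x_1}L^2_{x_2,t}$ giving one derivative gain on the high-frequency factor; the spherically-averaged maximal norm $L^2_{x_1}L^\infty_{x_2,t}L^2_\theta$ supplied by Theorem \ref{thm:max}; and the spherically-averaged Strichartz norms from \cite{Guo2}. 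Combining these, the same type of trilinear estimates used for \eqref{eq:cDNLS} in Theorem \ref{thm:2cDNLS} should close, now with the null-form cancellation making up for the fact that two of the three factors carry derivatives.

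The principal obstacle, as the author explicitly warns, is that the anisotropic norms tied to a distinguished axis are not comparable with the rotationally symmetric spherical average: there is no direct embedding between $L^2_{x_1}L^\infty_{x_2,t}L^2_\theta$ and $L^2_{x_1}L^\infty_{x_2,t}$. Consequently, when estimating the null-form triple product, one cannot simply place all three factors in the anisotropic maximal space and transfer angular regularity pointwise in $\theta$. Instead, the angular derivative must be moved, via Leibniz, onto whichever factor is placed in a rotation-invariant (Strichartz-type) norm, with the remaining two factors allocated between the local smoothing norm and the new maximal norm. Verifying that such an allocation is always possible for every frequency interaction type (high-high-to-high, high-high-to-low, high-low-to-high) in the null form — including the quintic and higher-order Taylor corrections — is the main technical content of the argument, and the new maximal estimate of Theorem \ref{thm:max} is what makes this feasible in two dimensions.
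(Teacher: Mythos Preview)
Your reduction via stereographic projection and Taylor expansion is correct, and you correctly identify that the essential difficulty is the null-form term $\bar u(\p_i u)^2$ with two derivatives. However, there is a genuine gap in the function-space setup that would make the iteration fail.

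You propose to work in ``the $F^1$ space of Section~4'', but the paper explicitly notes that no $X^{s,b}$ structure is needed for Theorem~\ref{thm:2cDNLS}, whereas $X^{s,b}$ structure \emph{is} required for Theorem~\ref{thm:Schmap}. The null form is exploited via the identity
\[
2\nabla v\cdot\nabla w=(i\p_t-\Delta)v\cdot w+v\cdot(i\p_t-\Delta)w-(i\p_t-\Delta)(vw),
\]
which trades the two derivatives for the Schr\"odinger operator $L=i\p_t-\Delta$. To use this you must control $\|LP_k u\|_{L^2_{t,x}}$ (i.e.\ the $X^{0,1}$ norm) and, dually, place outputs in $X^{0,-1/2,1}$; you also need modulation projectors $Q_{\le j}$, $Q_{\ge j}$ to separate the resonant and non-resonant interactions. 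None of this is available in $F_k$. The paper therefore builds a different dyadic space $Z_k$ containing $X^{0,1/2,\infty}$ and $2^{-k}X^{0,1}$ components, and the nonlinearity space $W_k$ contains both $X^{0,-1/2,1}$ and $2^{-k}L^2_{t,x}$. Without these, the high-high $\to$ low interaction in $\bar u(\p_i u)^2$ cannot be closed: the maximal/local-smoothing pairing only recovers one derivative, not two.

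A secondary point: you misattribute the role of the \cite{Guo2} estimates. They are not used ``to exploit the null structure'' directly; rather, they furnish the component $2^{k\e/2}\|f\|_{L^4_tL^{4/(1+\e)}_xL^3_\theta}$ in $Z_k$, which yields a bilinear $L^2$ estimate with an off-diagonal gain $2^{\e(\min k_i-\max k_i)/2}$ (Lemma~\ref{lem:L2est2}). This gain is what makes the $Y^1$ algebra property summable and handles the logarithmic loss in the high-high $\to$ low $L^2$ estimate that is specific to $n=2$. So the angular regularity is needed twice: once for the maximal function, and once for this bilinear refinement.
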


\begin{rem}
The space $\dot{B}^{s,1}_Q$ is defined by
\[\dot{B}^{s,1}_Q=\{f: \R^2\to \R^3; f-Q\in \dot{B}^{s,1}_{2,1,\theta},\, |f(x)|\equiv 1 \mbox{ a.e. in } \R^2\}.\]
In the proof of Theorem \ref{thm:Schmap}, we will use $X^{s,b}$-type
space in order to exploit the null structure as in \cite{Bej,IK}. In two dimension, there is a logarithmic
problem to exploit the null structure which does not appear in 3D and higher. Fortunately, we can use the generalised
spherically averaged Strichartz estimates obtained in \cite{Guo2} to overcome it.  So the additional angular regularity is not only needed for the new maximal function, but also for using null structure.
\end{rem}

\section{Definitions and Notations}

For $x, y\in \R$, $x\les y$ means that there exists a constant $C$
such that $x\leq Cy$, and $x\sim y$ means that $x\les y$ and $y\les
x$. We use $\ft(f)$, $\hat{f}$ to denote the space-time Fourier
transform of $f$, and $\ft_{x_i,t}f$ to denote the Fourier transform
with respect to $x_i,t$.

Let $\eta: \R\to [0, 1]$ be an even, non-negative, radially
decreasing smooth function such that: a) $\eta$ is compactly
supported in $\{\xi:|\xi|\leq 8/5\}$; b) $\eta\equiv 1$ for
$|\xi|\leq 5/4$. For $k\in \Z$ let
$\chi_k(\xi)=\eta(\xi/2^k)-\eta(\xi/2^{k-1})$ and $\chi_{\leq
k}(\xi)=\eta(\xi/2^k)$, $\widetilde
\chi_k(\xi)=\sum_{l=-9}^9\chi_{k+l}(\xi)$ and then define the
Littlewood-Paley projectors $P_k, P_{\leq k}, P_{\geq k}$ on $L^2(\R^2)$ by
\[\widehat{P_ku}(\xi)=\chi_k(|\xi|)\widehat{u}(\xi),\quad \widehat{P_{\leq
k}u}(\xi)=\chi_{\leq k}(|\xi|)\widehat{u}(\xi),\]
and $P_{\geq k}=I-P_{\leq k-1}$. Let $\cir^1$ be
the unit circle in $\R^2$. For $\ve e\in \cir^1$, define
$\widehat{P_{k,\ve e}u}(\xi)=\widetilde\chi_{k}(|\xi\cdot \ve
e|)\chi_k(|\xi|)\widehat{u}(\xi)$. Since for $|\xi|\sim 2^k$ we have
$\sum_{l=-5}^5\chi_{k+l}(\xi_1)+\sum_{l=-5}^5\chi_{k+l}(\xi_2)\sim
1$, then let
\[\beta_k^j(\xi)=\frac{\sum_{l=-5}^5\chi_{k+l}(\xi_j)}{\sum_{l=-5}^5\chi_{k+l}(\xi_1)+\sum_{l=-5}^5\chi_{k+l}(\xi_2)}\cdot \sum_{l=-1}^1\chi_{k+l}(|\xi|), \quad j=1,2.\]
Define the operator $\Theta_k^j$ on $L^2(\R^2)$ by
$\widehat{\Theta_k^jf}(\xi)=\beta_k^j(\xi)\hat f(\xi)$, $j=1,2$. Let $\ve
e_1=(1,0)$, $\ve e_2=(0,1)$. Then we have
\begin{align}\label{eq:Pkdec}
P_k=P_{k,\ve e_1}\Theta_k^1+P_{k,\ve e_2}\Theta_k^2.
\end{align}
For any $k\in \Z$, we define the modulation projectors $Q_k, Q_{\leq k}, Q_{\geq k}$ on $L^2(\R^2\times \R)$ by
\[\widehat{Q_ku}(\xi,\tau)=\chi_k(\tau+|\xi|^2)\widehat{u}(\xi,\tau),\quad \widehat{Q_{\leq
k}u}(\xi,\tau)=\chi_{\leq k}(\tau+|\xi|^2)\widehat{u}(\xi,\tau),\]
and $Q_{\geq k}=I-Q_{\leq k-1}$.

For any $\ve e\in \cir^1$, we can decompose $\R^2=\lambda \ve e
\oplus H_{\ve e}$, where $H_{\ve e}$ is the line with normal vector
$\ve e$, endowed with the induced measure. For $1\leq p, q< \infty$,
we define $L_{\ve e}^{p,q}$ the anisotropic Lebesgue space by
\[\norm{f}_{L_{\ve e}^{p,q}}=\brk{\int_\R\brk{\int_{H_{\ve e}\times \R} |f(\lambda \ve e+y,t)|^qdydt}^{p/q}d\lambda}^{1/p}\]
with the usual definition if $p=\infty$ or $q=\infty$. We write
$L_{\ve e_1}^{p,q}=L_{x_1}^pL_{x_2,t}^q$, $L_{\ve
e_2}^{p,q}=L_{x_2}^pL_{x_1,t}^q$.

We use $\theta\in \cir^1$ to denote the spherical variable. Let
$\Des$ be the Laplace operator on $\cir^1$, $\p_\theta$ be the
spherical derivative and $\Lambda_\theta=\sqrt{1-\Des}$. We identify
$\cir^1=\R/(2\pi \Z):=\T^1$. Denote by
$H^{s,p}_\theta=\Lambda_\theta^{-s}L^p$ the standard $L^p$ Sobolev
space on $\T^1$. We define $L_{\ve e}^{p,q}H_\theta^{s,r}$ by the
norm
\[\norm{f}_{L_{\ve e}^{p,q}H_\theta^{s,r}}=\normo{\norm{f(|x|\cos\theta ,|x|\sin \theta,t)}_{H_\theta^{s,r}}}_{L_{\ve e}^{p,q}}.\]
By the $SO(2)$ integration, we will also use the following form
\[\norm{f}_{L_{\ve e}^{p,q}H_\theta^{s,r}}=\normo{\brk{\int_0^{2\pi}|\Lambda_\theta ^s[f(A_\theta\cdot x, t)]|^rd\theta}^{1/r}}_{L_{\ve e}^{p,q}},\]
where $A_\theta=\begin{pmatrix} \cos\theta & -\sin \theta \\
\sin\theta & \cos\theta
\end{pmatrix}
$. For any function $f$, we denote the action $A_\beta
f(x)=f(A_\beta \cdot x)$. It's easy to see that $\p_\beta[A_\beta
f]=A_\beta (\p_\theta f)$. We use $\dot{B}^s_{p,q}$ to denote the
homogeneous Besov spaces on $\R^2$ which is the completion of the
Schwartz function under the norm
\[\norm{f}_{\dot{B}^s_{p,q}}=(\sum_{k\in \Z}2^{qsk}\norm{P_kf}_{L^p}^q)^{1/q}.\]
We define $\dot B^{s,\alpha}_{p,q,\theta}$ to be the space with the
norm $\norm{f}_{\dot
B_{p,q,\theta}^{s,\alpha}}=\norm{\Lambda_\theta^{\alpha}f}_{\dot
B_{p,q}^{s}}$. Then it's easy to see that $\norm{f}_{\dot
B_{p,q,\theta}^{s,1}}\sim \norm{f}_{\dot
B_{p,q}^{s}}+\norm{\p_\theta f}_{\dot B_{p,q}^{s}}$.

For the Schr\"odinger map, we need to use the Bourgain-type space associated to the Schr\"odinger equation.
In this paper we use the modulation-homogeneous version as in \cite{Bej2}. We define $X^{0,b,q}$ to be the completion of the space of Schwartz functions with the norm
\begin{align}\label{eq:Xbq}
\norm{f}_{X^{0,b,q}}=(\sum_{j\in \Z}2^{jbq}\norm{Q_j f}_{L^2_{t,x}}^q)^{1/2}.
\end{align}
If $q=2$ we simply write $X^{0,b}=X^{0,b,2}$. By the Plancherel's equality we have $\norm{f}_{X^{0,1}}=\norm{(i\p_t+\Delta)f}_{L^2_{t,x}}$. Since $X^{0,b,q}$  is not closed under conjugation, we also define the space $\bar X^{0,b,q}$ by the norm $\norm{f}_{\bar X^{0,b,q}}=\norm{\bar f}_{X^{0,b,q}}$, and similarly write $\bar X^{0,b}=\bar X^{0,b,2}$.
It's easy to see that $X^{0,b,q}$ function is unique modulo solutions of the homogeneous Schr\"odinger equation. For a more detailed description of the $X^{0,b,p}$ spaces we refer the readers to \cite{Tataru98} and \cite{Tao01}.

For any
space-time norm $X$, we define $XH^{s,p}_\theta$ by the norm
\[\norm{f}_{XH^{s,p}_\theta}=\norm{A_\theta f}_{XH^{s,p}_\theta}.\]
We conclude this section by a convolution property of the spherical
average space which implies that $P_k, \Theta_k^1,\Theta_k^2$ are bounded operators
in $XL^q_\theta$ if $X$ is space translation invariant.

\begin{lem}\label{lem:Pkbound}
Let $X$ be a space-time function space on $\R^2\times \R$ that is
space translation invariant. Then for $1\leq q\leq \infty$
\[\norm{f*g}_{XL_\theta^q}\leq C \norm{f}_{L_x^{1}L_\theta^{\infty}}\norm{g}_{XL_\theta^{q}}.\]
\end{lem}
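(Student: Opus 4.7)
The plan is to reduce the inequality to Young's convolution inequality in the $x$-variable, after exploiting rotational symmetry to move the convolution structure inside the rotated picture. The key observation is that Euclidean convolution commutes with rotations: for each fixed $\theta \in \cir^1$, performing the change of variables $y = A_\theta z$ in the integral gives
\[
(f*g)(A_\theta x, t) \;=\; \int f(A_\theta(x-z))\,g(A_\theta z, t)\,dz \;=\; \bigl((A_\theta f) *_x (A_\theta g)\bigr)(x,t),
\]
where $*_x$ denotes convolution only in the spatial variable.

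From here I would take the $L_\theta^q$-norm in $\theta$ with $(x,t)$ fixed, apply Minkowski's inequality to move the norm inside the spatial integral, and then use H\"older in $\theta$ in the form $\|h_1 h_2\|_{L_\theta^q}\leq \|h_1\|_{L_\theta^\infty}\|h_2\|_{L_\theta^q}$, obtaining the pointwise estimate
\[
\bigl\|(f*g)(A_\theta x, t)\bigr\|_{L_\theta^q} \;\le\; \int \bigl\|f(A_\theta(x-z))\bigr\|_{L_\theta^\infty}\,\bigl\|g(A_\theta z, t)\bigr\|_{L_\theta^q}\,dz \;=\; (F *_x G)(x,t),
\]
where $F(w) := \|f(A_\theta w)\|_{L_\theta^\infty}$ depends only on $|w|$, and $G(z,t) := \|g(A_\theta z, t)\|_{L_\theta^q}$.

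Next I would take the $X$-norm of both sides. Because $X$ is space-translation invariant, writing $(F *_x G)(x,t) = \int F(y)\,(\tau_y G)(x,t)\,dy$ and applying Minkowski's integral inequality in $X$ yields $\|F*_x G\|_X \le \|F\|_{L^1_x}\|G\|_X$. By the definitions of the mixed norms, $\|F\|_{L^1_x} = \|f\|_{L^1_x L_\theta^\infty}$ and $\|G\|_X = \|g\|_{XL_\theta^q}$, so combining the pointwise bound with the $X$-boundedness of convolution by $F$ gives precisely the stated inequality.

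I do not anticipate a significant obstacle: the only subtlety is verifying that the Minkowski step in $X$ is legitimate for all space-translation invariant $X$, which is standard since it only requires that $\|\tau_y h\|_X = \|h\|_X$ and the $X$-norm commutes with absolutely convergent integrals (interpreted in the usual dual-pairing sense for $X$). The argument works uniformly in $1 \le q \le \infty$ since H\"older and Minkowski in $L_\theta^q$ hold in this range.
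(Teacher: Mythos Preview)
Your proposal is correct and follows essentially the same route as the paper: write $(f*g)(A_\theta x,t)$, change variables $y\mapsto A_\theta y$ in the convolution to obtain $\int f(A_\theta(x-y))\,g(A_\theta y,t)\,dy$, and then use translation invariance of $X$ together with Minkowski/H\"older in $\theta$ to factor out $\|f\|_{L^1_xL_\theta^\infty}$. The paper compresses the Minkowski and H\"older steps into a single ``$\lesssim$'', but the argument is identical.
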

\begin{proof}
We have
\begin{align*}
\norm{f*g}_{XL_\theta^q}\sim&\normo{\int
f(A_\beta x-y)g(y)dy}_{XL_\beta^q}\\
\sim&\normo{\int f(A_\beta x-A_\beta y)g(A_\beta
y)dy}_{XL_\beta^q}\les
\norm{f}_{L_x^{1}L_\theta^{\infty}}\norm{g}_{XL_\theta^{q}},
\end{align*}
where in the last inequality we used that $X$ is space translation
invariant.
\end{proof}

\section{Spherically averaged maximal function estimates}

In this section, we prove the spherically averaged maximal function
estimate. First, we consider the homogeneous case.

\begin{lem}\label{lem:max2d}
Assume $k\in \Z$. Then
\begin{align}
\norm{e^{it\Delta}P_kf}_{L_{\ve e}^{2,\infty}L^2_\theta}\les
2^{k/2}\norm{f}_2
\end{align}
\end{lem}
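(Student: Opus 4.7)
The plan is to combine scaling, rotational symmetry, and an angular Fourier decomposition to turn the $L^2_\theta$ averaging into $\ell^2$ orthogonality over angular modes. First, by the Schr\"odinger scaling $u_\lambda(x, t) = \lambda u(\lambda x, \lambda^2 t)$ with $\lambda = 2^k$, it suffices to prove the $k = 0$ case. Since $e^{it\Delta}P_0$ commutes with rotations on $\R^2$ and the spherical norm $L^2_\theta$ is defined by integration over rotations of the spatial argument, the estimate for one direction $\ve e$ implies the estimate for any other by rotating the data; I may take $\ve e = \ve e_1$ and reduce to
\[
\|e^{it\Delta}P_0 f\|_{L^2_{x_1}L^\infty_{x_2, t}L^2_\theta} \lesssim \|f\|_{L^2(\R^2)}.
\]

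Next, I expand $f$ into angular Fourier modes $f = \sum_{n \in \Z} f_n$, where $f_n(r\cos\phi, r\sin\phi) = a_n(r) e^{in\phi}$. These modes are orthogonal in $L^2(\R^2)$, so $\|f\|_{L^2}^2 = \sum_n \|f_n\|_{L^2}^2$. Rotational covariance of the flow gives $e^{it\Delta}P_0 f_n(x, t) = d_n(|x|, t) e^{in\arg(x)}$ for a radial profile $d_n$. At any $x$ with $|x| = r$, Parseval on $\cir^1$ yields
\[
\big\|(e^{it\Delta}P_0 f)(A_\theta x, t)\big\|_{L^2_\theta}^2 = 2\pi \sum_{n \in \Z} |d_n(r, t)|^2,
\]
which depends only on $r$ and $t$. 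Using the pointwise inequality $\sup_{x_2, t} \sum_n |d_n|^2 \leq \sum_n \sup_{x_2, t}|d_n|^2$ and integrating in $x_1$,
\[
\|e^{it\Delta}P_0 f\|_{L^2_{x_1}L^\infty_{x_2, t}L^2_\theta}^2 \leq 2\pi \sum_n \|e^{it\Delta}P_0 f_n\|_{L^2_{x_1}L^\infty_{x_2, t}}^2.
\]
Combined with $\|f\|_{L^2}^2 = \sum_n \|f_n\|_{L^2}^2$, the problem reduces to the per-mode bound
\[
\|e^{it\Delta}P_0 f_n\|_{L^2_{x_1}L^\infty_{x_2, t}} \lesssim \|f_n\|_{L^2}, \quad \text{uniformly in } n.
\]

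This per-mode estimate is the main obstacle. Since $|e^{it\Delta}P_0 f_n(x, t)| = |d_n(|x|, t)|$ is radial in $x$, the maximal norm equals $2\int_0^\infty \sup_{r \geq s, t}|d_n(r, t)|^2 ds$. Via the Hankel-type representation
\[
d_n(r, t) = \int_0^\infty \chi_0(\rho)\tilde a_n(\rho) e^{-it\rho^2} J_n(r\rho) \rho d\rho,
\]
with $\|f_n\|_{L^2}^2 \sim \|\tilde a_n\|_{L^2(\rho d\rho)}^2$ by Hankel--Plancherel, the Bessel asymptotics localize $|d_n|$ essentially to $r \gtrsim n$: exponentially small for $r \ll n$, $|J_n(z)| \sim n^{-1/3}$ in the Airy transition region $z \sim n$, and $|J_n(z)| \lesssim z^{-1/2}$ for $z \gg n$. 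A direct Cauchy--Schwarz bound yields only $|d_n(r, t)|^2 \lesssim r^{-1}\|f_n\|^2$ (uniform in $t$), which is just short of integrable against $\sup_{r \geq s}$ at infinity — this is precisely the logarithm that causes the unaveraged 2D maximal function to fail. To close the gap one must exploit the oscillation of $e^{-it\rho^2}$ in time, via a $TT^*$-type bound and Schur test on the Hankel--Schr\"odinger kernel $\int \chi_0(\rho)^2 e^{-i\tau\rho^2} J_n(r\rho) J_n(r'\rho) \rho d\rho$. Stationary phase in $\tau$ should then yield the needed extra decay uniformly in $n$, in the spirit of Tao's argument for the spherically averaged endpoint Strichartz estimate in two dimensions; summing the per-mode bound over $n$ concludes the proof.
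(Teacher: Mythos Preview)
Your reductions are exactly those of the paper: scale to $k=0$, rotate to $\ve e=\ve e_1$, expand in angular Fourier modes, and use Parseval in $\theta$ to reduce to the per-mode bound
\[
\bigl\|T_n(a)\bigr\|_{L^2_{x}L^\infty_{r\ge|x|,\,t}}\lesssim\|a\|_{L^2},\qquad T_n(a)(t,r)=\int_0^\infty e^{-it\rho^2}\chi_0(\rho)J_n(r\rho)\,a(\rho)\rho\,d\rho,
\]
uniformly in $n$. The gap is in your proposed mechanism for this last step. ``Stationary phase in $\tau$'' is meaningless: in your $TT^*$ kernel $\tau=t-t'$ is a parameter, not an integration variable, and since the outer norm is $L^\infty_t$ on one side and $L^1_{t'}$ on the other, the kernel bound you feed into the Schur test must be \emph{uniform} in $\tau$. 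Pure time oscillation therefore cannot recover the missing logarithm.

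What actually closes the argument in the paper is the \emph{spatial} oscillation coming from the Bessel phase. One splits the outer integration into $|x|\lesssim n$ and $|x|\gg n$. On the near region one simply shows $\sup_{r,t}|T_n(a)(t,r)|\lesssim n^{-1/2}\|a\|_2$ via $TT^*$ and the uniform pointwise bound $|J_n(r)|\lesssim(1+|r^2-n^2|)^{-1/4}$; no oscillation is used. On the far region one first replaces $J_n(r)$ by its asymptotic expansion $c(r^2-n^2)^{-1/4}e^{\pm i\theta(r)}+O(r^{-1})$, so the $TT^*$ kernel becomes an oscillatory integral in $\rho$ with phase $\theta(x\rho s)-\theta(x'\rho' s)-(t-t')s^2$. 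Van der Corput in $s$ (not in $\tau$) then gives the bound $(1+|x\rho-x'\rho'|)^{-1/2}$, uniformly in $t,t'$; this spatial decay is what makes the resulting Schur/Hardy--Littlewood bound in $x$ summable. Your sketch has the right skeleton ($TT^*$ plus Schur), but you need to insert the Bessel asymptotic expansion before doing stationary phase, and the phase variable is the radial frequency, not time.
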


\begin{proof}

By the scaling invariance, we may assume $k=0$. Moreover, since
$e^{it\Delta}$ commutates with rotation, then by a rotation
transform we may assume $\ve e=(1,0)$. It reduces to prove
\begin{align}\label{eq:max2d1}
\norm{e^{it\Delta}P_0f}_{L_{x_1}^2L_{x_2,t}^{\infty}L^2_\theta}\les
\norm{f}_2
\end{align}
Using the H\"older inequality and Bernstein's inequality, we easily
get
\begin{align*}
\norm{e^{it\Delta}P_0 f}_{L^2_{|x_1|\leq 9}L_{x_2,t}^\infty
L_\theta^2} \leq \norm{e^{it\Delta}P_0 f}_{L_{x_1,x_2,t}^\infty}\leq
C \norm{f}_2.
\end{align*}
Then it remains to show
\begin{align}\label{eq:thmmainpf2}
\norm{e^{it\Delta}P_0 f}_{L^2_{|x_1|\geq 9}L_{x_2,t}^\infty
L_\theta^2} \leq C \norm{f}_2.
\end{align}
We will prove \eqref{eq:thmmainpf2} by two steps.

{\bf Step 1. } radial case

We assume $f$ is radial. It is well known that if $G(x)=g(|x|)$ is
radial and $G\in L^2(\R^n)$, then the Fourier transform of $G$ is
also radial (cf. \cite{Stein1}), and
\begin{align}\label{eq:FBform}
\hat{G} (\xi)= 2\pi  \int_{0}^{\infty} g(s) s^{n-1}(s
|\xi|)^{-\half{n-2}}J_{\frac{n-2}{2}}(s|\xi|)ds,
\end{align}
where $J_m(r)$ is the Bessel function
\begin{align*}
J_m(r)=\frac{(r/2)^m}{\Gamma(m+1/2)\pi^{1/2}}
\int_{-1}^1e^{irt}(1-t^2)^{m-1/2}dt, \ \ m>-1/2.
\end{align*}
Since $f$ is radial and denote $\widehat{f}(\xi)=h(|\xi|)$, then by
the formula \eqref{eq:FBform} we get $e^{it\Delta}P_0
f(x_1,x_2)=F_0(t,\sqrt{x_1^2+x_2^2})$, where for $\rho\geq 0$
\begin{align}\label{eq:radialfree}
F_0(t,\rho)= 2\pi  \int_{0}^{\infty}
e^{-its^2}\eta_0(s)h(s)sJ_{0}(s\rho)ds.
\end{align}
Therefore, to show \eqref{eq:thmmainpf2} it is equivalent to show
\begin{align}\label{eq:thmmainpf3}
\int_1^\infty \sup_{\rho\geq x,t\in \R}\aabs{F_0(t,\rho)}^2dx\leq C
\norm{h}_2^2.
\end{align}

To prove \eqref{eq:thmmainpf3}, we will use the decay properties at
the infinity of the Bessel function. More precisely, for $n\geq 2$
\begin{align}\label{eq:Besselinfty}
J_{\frac{n-2}{2}}(r)=c_n\frac{e^{ir}-e^{-ir}}{r^{1/2}}+c_nr^{\frac{n-2}{2}}e^{-ir}E_+(r)-c_nr^{\frac{n-2}{2}}e^{ir}E_-(r),
\end{align}
where $E_\pm(r)\les r^{-(n+1)/2}$ if $r\ge1$, see \cite{Stein2}.
Inserting \eqref{eq:Besselinfty} into \eqref{eq:radialfree}, we then
divide $F_0(t,|x|)$ into two parts: the main term and the error
term, namely
\begin{align}\label{eq:decfree}
F_0(t,\rho)=M(t,\rho)+E(t,\rho)
\end{align}
with
\begin{align*}
c{M}(t,\rho)=&\rho^{-\frac{1}{2}}\int_{\R}
\eta_0(s)h(s)s^{\frac{1}{2}} e^{i(\rho s-ts^2)}
ds+\rho^{-\frac{1}{2}}\int_{\R} \eta_0(s)
h(s)s^{\frac{1}{2}}e^{-i(\rho s+ts^2)} ds,\\
c{E}(t,\rho)=&\int_{\R} \eta_0(s)h(s)se^{-its^2- i\rho s}E_{+}(\rho
s)ds-\int_{\R} \eta_0(s)h(s)se^{-its^2+ i\rho s}E_{-}(\rho s)ds.
\end{align*}
For the error term, since $|E(t,\rho)|\les \rho^{-3/2}\norm{h}_2$,
then one get that
\begin{align*}
\int_1^\infty \sup_{\rho\geq x,t\in \R}\aabs{E(t,\rho)}^2dx\les
\int_1^\infty x^{-3}\norm{h}_2^2dx \les \norm{h}_2^2.
\end{align*}
It remains to bound the main term. From symmetry, it suffices to
show that
\begin{align}\label{eq:thmmainpf4}
\int_1^\infty \sup_{\rho\geq x,t\in
\R}\frac{1}{\rho}\aabs{\int_{0}^{\infty} \eta_0(s)h(s)e^{i(\rho
s-ts^2)}ds}^2dx\les \norm{h}_2^2.
\end{align}

Obviously,
\begin{align*}
&\int_1^\infty \sup_{\rho\geq x,t\in
\R}\frac{1}{\rho}\aabs{\int_{0}^{\infty} \eta_0(s)h(s)e^{i(\rho
s-ts^2)}ds}^2dx\\
&\les \sum_{k=0}^\infty 2^{-k}\int_1^\infty \sup_{2^k x\leq \rho\leq
2^{k+1}x,t\in \R}\frac{1}{x}\aabs{\int_{0}^{\infty}
\eta_0(s)h(s)e^{i(\rho s-ts^2)}ds}^2dx.
\end{align*}
Define the operator $T$ acting on $h\in L^2([1,3])$ as follows
\[T(h)(x)=\frac{1}{x^{1/2}}\int_{0}^{\infty} \eta_0(s)h(s)e^{i(x\rho
s-ts^2)}ds.\] Thus it suffices to show
\[\norm{Th}_{L^2_{x\in [1,\I]}L^\infty_{\rho\sim 2^k, t\in \R}}\leq C\norm{h}_2, \ \forall\, k\in \Z_+.\]
By $TT^*$ argument, it suffices to show
\begin{align}\label{eq:step1TT}
\norm{TT^*f}_{L^2_{x\in [1,\I]}L^\infty_{\rho\sim 2^k, t\in \R}}\leq
C\norm{f}_{L^2_{x\in [1,\I]}L^1_{\rho\sim 2^k, t\in \R}}.
\end{align}
Indeed, we have
\begin{align*}
TT^*f=\frac{1}{x^{1/2}}\int \brk{\int_{0}^{\infty}
\eta_0^2(s)e^{i((x\rho-x'\rho')
s-(t-t')s^2)}ds}\frac{1}{x'^{1/2}}f(x',\rho',t')dx'd\rho'dt'.
\end{align*}
By the stationary phase method, we have
\[\aabs{\int_{0}^{\infty}
\eta_0^2(s)e^{i((x\rho-x'\rho') s-(t-t')s^2)}ds}\les
(1+|x\rho-x'\rho'|)^{-1/2}.\] Thus, we get
\begin{align*}
|TT^*f|\les& \frac{1}{x^{1/2}}\int (1+|x\rho-x'\rho'|)^{-1/2}
\frac{1}{x'^{1/2}}|f(x',\rho',t')|dx'd\rho'dt'\\
\les& \int_{|x|\sim |x'|}
\frac{|f(x',\rho',t')|}{x^{1/2}x'^{1/2}}dx'd\rho'dt'+\int_{|x|\gg
|x'|}
\frac{|f(x',\rho',t')|}{2^{k/2}xx'^{1/2}}dx'd\rho'dt'\\
&+\int_{|x|\ll |x'|}
\frac{|f(x',\rho',t')|}{2^{k/2}x'x^{1/2}}dx'd\rho'dt'\\
:=&I+II+III.
\end{align*}
Now we show \eqref{eq:step1TT}. For the contribution of the term
$I$, we have
\[I\les M(\norm{f(\cdot,\rho,t)}_{L^1_{\rho,t}})(x)\]
where $M$ is the Hardy-Littlewood maximal operator. Then from the
$L^2$ boundedness of $M$, we see the estimate of $I$ is fine. The
estimate of $II,III$ simply follows from the H\"older inequality.

{\bf Step 2.} general case

We assume $f$ is nonradial. First, we make some reductions using the
spherical harmonics on $\cir^1$. For any function $f\in L^2(\R^2)$,
we can write
\[\widehat{f}(re^{i\theta})=\sum_{n\in \Z}f_n(r)e^{in\theta}.\]
Hence by the property of Fourier transform (see \cite{Stein1})
\[e^{it\Delta}P_0f(re^{i\theta})=\sum_{n\in \Z}2\pi i^{-n}T_n(f_n)(t,r)e^{in\theta},\]
where
\[T_n(f)(t,r)=\int e^{-it\rho^2}J_n(r\rho)\rho \chi_0(\rho)f(\rho)d\rho.\]
Thus \eqref{eq:thmmainpf2} becomes
\begin{align}\label{eq:step2pf1}
\norm{T_n(f_n)(t,|x|)}_{L_{x_1}^2L_{x_2,t}^\infty l_n^2}\les
\norm{f_n(|x|)}_{L_x^2l_n^2}.
\end{align}
To prove \eqref{eq:step2pf1}, it is equivalent to show
\begin{align}\label{eq:step2pf2}
\norm{T_n(f)(t,r)}_{L_{x}^2L_{r\geq |x|,t}^\infty}\les
\norm{f}_{L^2},
\end{align}
with a bound independent of $n\geq 0$

To prove \eqref{eq:step2pf2}, we need to use the uniform property of
$J_n$ with respect to $n$. We have
\begin{align*}
\norm{T_n(f)(t,r)}_{L_{x}^2L_{r\geq |x|,t}^\infty}\les&
\norm{T_n(f)(t,r)}_{L_{|x|\les n}^2L_{r\geq
|x|,t}^\infty}+\norm{T_n(f)(t,r)}_{L_{|x|\gg n}^2L_{r\geq
|x|,t}^\infty}\\
:=&A+B.
\end{align*}
First, we estimate the term $A$. By the Cauchy-Schwartz inequality,
we have
\[A\leq n^{1/2}\norm{T_n(f)(t,r)}_{L_{r\geq 0,t}^\infty}.\]
Thus it suffices to show $|T_n(f)(t,r)|\les n^{-1/2}$. If $r\gg n$
or $r\ll n$, this follows easily from the fact that $|J_n(r)|\les
n^{-1/2}$. It remains to show
\[\normo{\int e^{-it\rho^2}J_n(r\rho) \chi_0(\rho)f(\rho)d\rho}_{L^\infty_{r\sim n, t}}\les n^{-1/2}\norm{f}_2.\]
By $TT^*$ argument, it suffices to show
\[\normo{\int (\int e^{-i(t-t')\rho^2}J_n(r\rho)J_n(r'\rho) \chi_0(\rho)d\rho)g(t',r') dt'dr'}_{L^\infty_{r\sim n, t}}\les n^{-1}\norm{g}_{L^1_{r\sim n, t}}.\]
By the uniform decay of Bessel function (e.g. see Lemma 2.2 in
\cite{Guo2}),
\begin{align*}
|J_n(r)|\les (1+|r^2-n^2|)^{-1/4},
\end{align*}
it suffices to show
\[\sup_{r,r'\sim 1}|\int (1+|r^2\rho^2n^2-n^2|)^{-1/4}(1+|r'^2\rho^2n^2-n^2|)^{-1/4} \chi_0(\rho)d\rho|\les n^{-1},\]
which follows from the Cauchy-Schwartz inequality.

Now we estimate the term $B$. Since $r\geq |x|\gg n$, we have (given
in \cite{BC}, for the proof see Lemma 2.5 in \cite{Guo2})
\[J_n(r)=\frac{1}{\sqrt{2\pi}}\frac{e^{i\theta(r)}+e^{-i\theta(r)}}{(r^2-n^2)^{1/4}}+h(n,r):=J_n^1(r)+J_n^2(r)+J_n^3(r),\]
where
\[\theta(r)=(r^2-n^2)^{1/2}-\nu \arccos \frac{n}{r}-\frac \pi 4\]
and
\[|h(n,r)|\les r^{-1}.\]
Thus, we get
\begin{align*}
B\leq \sum_{j=1}^3\norm{T_n^j(f)(t,r)}_{L_{|x|\gg n}^2L_{r\geq
|x|,t}^\infty}:=\sum_{j=1}^3B_j
\end{align*}
where
\[T_n^j(f)(t,r)=\int e^{-it\rho^2}J^j_n(r\rho)\rho \chi_0(\rho)f(\rho)d\rho,\quad j=1,2,3.\]
For $B_3$, we use the decay of $h$ and get
\[\sup_{r\geq |x|}|T_n^3(f)(t,r)|\les \sup_{r\geq |x|}r^{-1}\norm{f}_2\les |x|^{-1}\norm{f}_2\]
which suffices to give the bound as desired. It remains to control
$B_1$ since the estimate for $B_2$ follows in the same way.

It suffices to show that
\begin{align*}
\int_{10n}^\infty \sup_{\rho\geq x,t\in \R}\aabs{\int_{0}^{\infty}
\frac{\chi_0(s)e^{i(\theta(s\rho)-ts^2)}}{(\rho^2s^2-n^2)^{1/4}}
h(s)ds}^2dx\les \norm{h}_2^2.
\end{align*}
Since $\rho\geq |x|\gg n$ and $s\sim 1$, then
$|(\rho^2s^2-n^2)^{-1/4}-(\rho s)^{-1/2}|\les |x|^{-5/2}n^2$, and
thus we get
\begin{align*}
&\int_{10n}^\infty \sup_{\rho\geq x,t\in \R}\aabs{\int_{0}^{\infty}
(\frac{1}{(\rho^2s^2-n^2)^{1/4}}-\frac{1}{\sqrt{\rho s}})
\chi_0(s)e^{i(\theta(s\rho)-ts^2)}h(s)ds}^2dx\\
\les& \int_{10n}^\infty x^{-5}n^4dx\cdot \norm{h}_2^2\les
\norm{h}_2^2.
\end{align*}
Therefore, it remains to show
\begin{align}
\int_{10n}^\infty \sup_{\rho\geq x,t\in
\R}\frac{1}{\rho}\aabs{\int_{0}^{\infty}
\chi_0(s)e^{i(\theta(s\rho)-ts^2)} h(s)ds}^2dx\les \norm{h}_2^2.
\end{align}
We proceed as in Step 1. Obviously,
\begin{align*}
&\int_{10n}^\infty \sup_{\rho\geq x,t\in
\R}\frac{1}{\rho}\aabs{\int_{0}^{\infty}
\chi_0(s)e^{i(\theta(s\rho)-ts^2)} h(s)ds}^2dx\\
&\les \sum_{k=0}^\infty 2^{-k}\int_{10n}^\infty \sup_{2^k x\leq
\rho\leq 2^{k+1}x,t\in \R}\frac{1}{x}\aabs{\int_{0}^{\infty}
\chi_0(s)e^{i(\theta(s\rho)-ts^2)} h(s)ds}^2dx.
\end{align*}
Define the operator $L$ acting on $h\in L^2([1,3])$ as follows
\[L(h)(x)=\frac{1}{x^{1/2}}\int_{0}^{\infty} \eta_0(s)h(s)e^{i(\theta(x\rho
s)-ts^2)}ds.\] Thus it suffices to show
\[\norm{Lh}_{L^2_{x\in [1,\I]}L^\infty_{\rho\sim 2^k, t\in \R}}\leq C\norm{h}_2, \ \forall\, k\in \Z_+.\]
By $TT^*$ argument, it suffices to show
\begin{align}\label{eq:step1TT}
\norm{LL^*f}_{L^2_{x\in [1,\I]}L^\infty_{\rho\sim 2^k, t\in \R}}\leq
C\norm{f}_{L^2_{x\in [1,\I]}L^1_{\rho\sim 2^k, t\in \R}}.
\end{align}
Indeed, we have
\begin{align*}
LL^*f=\frac{1}{x^{1/2}}\int \brk{\int_{0}^{\infty}
\chi_0^2(s)e^{i(\theta(x\rho s)-\theta(x'\rho'
s)-(t-t')s^2)}ds}\frac{1}{x'^{1/2}}f(x',\rho',t')dx'd\rho'dt'.
\end{align*}
Direct computation shows that for $r\gg n$
\begin{align*}
\theta'(r)=&(r^2-n^2)^{1/2}r^{-1}\sim 1,\\
\theta''(r)=&(r^2-n^2)^{-1/2}-(r^2-n^2)^{1/2}r^{-2}=(r^2-n^2)^{-1/2}n^2r^{-2}\les
r^{-1}.
\end{align*}
Thus by the stationary phase method, we have
\begin{align*}
\aabs{\int_{0}^{\infty}
\chi_0^2(s)e^{i(\theta(x\rho s)-\theta(x'\rho' s)-(t-t')s^2)}ds}\les
\begin{cases}
1, \quad {|x|\sim |x'|}\\
2^{-k/2}\max(|x|,|x'|)^{-1/2}, \, |x|\nsim |x'|.
\end{cases}
\end{align*}
With this the rest proof is the same as in step 1. We complete the
proof.
\end{proof}

Next, we derive the inhomogeneous estimate. Here we use an direct
argument of \cite{ywang} which is in the spirit of Lemma 7.5 in
\cite{BIKT}.

\begin{lem}\label{lem:max2din}
Let $k\in \Z$. Assume $u,F$ solves the equation
\[iu_t+\Delta u=F(x,t), \quad u(x,0)=0.\]
Then for any $\ve e\in \cir^1$ we have
\begin{align}
\norm{P_{k}u}_{L_{\ve e}^{2,\infty}L^2_\theta}\les \sup_{\ve e\in
\cir^1}\norm{F}_{L_{\ve e}^{1,2}}.
\end{align}
\end{lem}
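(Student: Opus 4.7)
The plan is a $TT^*$-style composition, in the spirit of \cite[Lemma~7.5]{BIKT} and following \cite{ywang}: combine Lemma~\ref{lem:max2d} (the homogeneous spherically averaged maximal function) with the dual of the homogeneous local smoothing estimate of Ionescu--Kenig quoted in the introduction, and then pass from the full time integral $\int_\R$ to the retarded one $\int_0^t$ via a Christ--Kiselev argument.

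First I would use the decomposition $P_k=P_{k,\ve e_1}\Theta_k^1+P_{k,\ve e_2}\Theta_k^2$ from \eqref{eq:Pkdec}. The smooth Fourier multipliers $\Theta_k^j$ have Schwartz kernels in $x$ and, by a Young-type argument, are bounded on $L^1_{x_j}L^2_{\bar x_j,t}$. It therefore suffices to establish, for each $j\in\{1,2\}$, the bound
\[
\Big\|\int_0^t e^{i(t-s)\Delta}P_{k,\ve e_j}F(s)\,ds\Big\|_{L^{2,\infty}_{\ve e}L^2_\theta}\les \|F\|_{L^1_{x_j}L^2_{\bar x_j,t}},
\]
since the right-hand side is then dominated by $\sup_{\ve e'\in\cir^1}\|F\|_{L^{1,2}_{\ve e'}}$.

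Next I would prove the corresponding non-retarded version, in which $\int_0^t$ is replaced by $\int_\R$. Writing $\int e^{i(t-s)\Delta}P_{k,\ve e_j}F(s)\,ds=e^{it\Delta}\bigl[P_{k,\ve e_j}a\bigr]$ with $a:=\int e^{-is\Delta}F(s)\,ds$, Lemma~\ref{lem:max2d} applied to the frequency-localized datum $P_{k,\ve e_j}a$ gives
\[
\|e^{it\Delta}P_{k,\ve e_j}a\|_{L^{2,\infty}_{\ve e}L^2_\theta}\les 2^{k/2}\|a\|_{L^2_x},
\]
while dualizing the local smoothing estimate $\|e^{it\Delta}P_{k,\ve e_j}f\|_{L^\infty_{x_j}L^2_{\bar x_j,t}}\les 2^{-k/2}\|f\|_{L^2}$ recalled in the introduction yields
\[
\|a\|_{L^2_x}\les 2^{-k/2}\|F\|_{L^1_{x_j}L^2_{\bar x_j,t}}.
\]
Composing these two bounds, the factors $2^{k/2}$ and $2^{-k/2}$ cancel and the non-retarded estimate follows with an absolute constant.

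To restore the time truncation I would invoke the vector-valued Christ--Kiselev lemma. On the input side the time variable enters $L^1_{x_j}L^2_{\bar x_j,t}$ via an inner $L^2_t$ factor (after the Fubini identification $L^2_{\bar x_j,t}=L^2_{\bar x_j}L^2_t$), and on the output side it enters $L^{2,\infty}_{\ve e}L^2_\theta$ through the inner $L^\infty_t$ factor of $L^\infty_{\bar{\ve e},t}$. Since the input time exponent $2$ is strictly less than the output time exponent $\infty$, Christ--Kiselev upgrades the non-retarded bound of the previous paragraph to the same bound for $\int_0^t$ at the cost of a universal constant. The main technical point requiring care is precisely this last step: one must recast the mixed-norm spaces in the vector-valued form $L^p_t(X)\to L^q_t(Y)$ by a Fubini/Minkowski bookkeeping of the inner norms, after which the standard dyadic decomposition of the time axis in the proof of Christ--Kiselev produces the retarded bound and the argument reduces entirely to Lemma~\ref{lem:max2d} together with the classical local smoothing estimates already recorded in the introduction.
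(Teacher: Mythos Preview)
Your route is genuinely different from the paper's. You propose a $TT^*$-type composition (Lemma~\ref{lem:max2d} on the output, dual local smoothing on the input) followed by Christ--Kiselev, whereas the paper argues directly on the Fourier side: after the same $P_{k,\ve e_j}$ decomposition it writes $u$ via the symbol $(\tau-|\xi|^2)^{-1}$, splits according to whether $\tau-\xi_2^2\sim 1$, disposes of the elliptic piece by Plancherel plus Sobolev/Bernstein, and for the near-paraboloid piece computes the $\xi_1$-integral explicitly as a Hilbert transform, changes variables $\eta_1=-\sqrt{\tau-\xi_2^2}$, and lands back on a free solution to which Lemma~\ref{lem:max2d} applies. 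No Christ--Kiselev is used for this lemma (the paper only invokes it later, in the linear-estimates lemma, for the \emph{other} components of $N_k$).

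The delicate point in your sketch is exactly the one you flag last, and your proposed fix does not quite work as written. To run the standard Christ--Kiselev you must put time as the \emph{outer} variable on both sides. On the input side Minkowski does give $\|F\|_{L^2_t L^1_{x_j}L^2_{\bar x_j}}\le \|F\|_{L^1_{x_j}L^2_{\bar x_j,t}}$, which is the helpful direction. But on the output side Minkowski goes the other way: $L^\infty_t\,L^2_{x_1}L^\infty_{x_2}L^2_\theta$ is \emph{stronger} than $L^2_{x_1}L^\infty_{x_2,t}L^2_\theta$, so the bound coming from Lemma~\ref{lem:max2d} (which lives in the latter, time-inner, space) does \emph{not} transfer to the time-outer space by ``Fubini/Minkowski bookkeeping.'' You can repair this in one of two ways: either observe that the time-outer output bound $\|e^{it\Delta}P_k a\|_{L^\infty_t L^2_{x_1}L^\infty_{x_2}L^2_\theta}\lesssim 2^{k/2}\|a\|_2$ holds by a fixed-time Bernstein inequality (but then Lemma~\ref{lem:max2d} is no longer what you are using), or invoke a genuine inner-time variant of Christ--Kiselev such as \cite[Lemma~7.3]{BIKT}, whose hypotheses you should check for the spaces $L^1_{x_j}L^2_{\bar x_j,t}$ and $L^{2,\infty}_{\ve e}L^2_\theta$. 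The paper's direct Fourier-side argument sidesteps this issue entirely, which is its main advantage; your approach, once patched, is shorter in spirit but leans on a black-box retarded-estimate lemma whose applicability here is not automatic.
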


\begin{proof}
By the scaling and rotational invariance, we may assume $k=0$ and
$\ve e=(1,0)$. $P_0u=U+V$ such that $\ft_xU$ is supported in
$\{|\xi|\sim 1: |\xi_1|\sim 1\}\times \R$ and $\ft_xV$ is supported
in $\{|\xi|\sim 1: |\xi_2|\sim 1\}\times \R$. Thus it suffices to
show
\begin{align}
\norm{U}_{L_{x_1}^2L_{x_2,t}^{\infty}L^2_\theta}\les
\norm{F}_{L_{x_1}^{1}L_{x_2,t}^{2}},\quad
\norm{V}_{L_{x_1}^2L_{x_2,t}^{\infty}L^2_\theta}\les
\norm{F}_{L_{x_2}^{1}L_{x_1,t}^{2}}.
\end{align}
We only show the estimate for $U$, since the estimate for $V$ is
identical. We still write $u=U$. We assume $\ft_xF$ is supported in
$\{|\xi|\sim 1: |\xi_1|\sim 1\}\times \R$. We have
\begin{align*}
u(t,x)=&\int_{\R^3}\frac{e^{it\tau}e^{ix\xi}}{\tau-|\xi|^2}\widehat{F}(\xi,\tau)d\xi
d\tau\\
=&\int_{\R^3}\frac{e^{it\tau}e^{ix\xi}}{\tau-|\xi|^2}\widehat{F}(\xi,\tau)(1_{\R^2\setminus\{(\tau,\xi_2):\tau-\xi_2^2\sim
1\}}+1_{\tau-\xi_2^2\sim 1})d\xi d\tau\\
:=&u_1+u_2.
\end{align*}
For $u_1$, we simply use the Plancherel equality and get
\[\norm{\Delta u_1}_{L^2}+\norm{\partial_t u_1}_2\leq \norm{F}_2,\]
and thus by Sobolev embedding and Bernstein's inequality we obtain
the desired estimate. Now we estimate $u_2$. Let
$G(x_1,\xi_2,\tau)=1_{|\tau-\xi_2^2|\sim 1}\ft_{x_2,t}F$. Then
\begin{align*}
u_2=&\int_\R \int_{\R^3}\frac{e^{it\tau}e^{ix\xi}}{\tau-|\xi|^2}[e^{-iy_1\xi_1}G(y_1,\xi_2,\tau)]d\xi d\tau dy_1\\
=&\int_\R T_{y_1}(G(y_1,\cdot))(t,x) dy_1
\end{align*}
where
\[T_{y_1}(f)(t,x)=\int_{\R^3}\frac{e^{it\tau}e^{ix\xi}}{\tau-|\xi|^2}[e^{-iy_1\xi_1}1_{|\xi_2|\les 1,\tau-\xi_2^2\sim
1}f(\xi_2,\tau)]d\xi d\tau.\] Thus it suffices to prove
\begin{align}
\norm{T_{y_1} (f)}_{L_{x_1}^2L_{x_2,t}^{\infty}L^2_\theta}\les
\norm{f}_{L^2},\quad \forall y_1\in \R.
\end{align}

Define $s=s(\tau,\xi_2)=\tau-\xi_2^2$, we have
\begin{align*}
T_{y_1}(f)(t,x)=&\int_{\R^2}1_{|\xi_2|\les 1,\tau-\xi_2^2\sim
1}\brk{\int\frac{e^{i(x_1-y_1)\xi_1}}{\tau-\xi_2^2-\xi_1^2}d\xi_1}e^{it\tau}e^{ix_2\xi_2}f(\xi_2,\tau)d\xi_2
d\tau\\
=&\int_{\R^2}\brk{\int
\frac{e^{i(x_1-y_1)\xi_1}}{2\sqrt{s}}(\frac{1}{\sqrt{s}+\xi_1}+\frac{1}{\sqrt{s}-\xi_1})d\xi_1}\\
&\quad \cdot e^{it\tau}e^{ix_2\xi_2}1_{|\xi_2|\les
1,\tau-\xi_2^2\sim
1}f(\xi_2,\tau)d\xi_2 d\tau\\
:=&I_1(f)+I_2(f).
\end{align*}
We only estimate $I_1$, since $I_2$ follows in the same way. By the
property of Hilbert transform, we get
\begin{align*}
I_1(f)(t,x)=&\int_{\R^2}
\frac{e^{-i(x_1-y_1)\sqrt{s}}}{2\sqrt{s}}i\sgn(x_1-y_1)\cdot
e^{it\tau}e^{ix_2\xi_2}1_{|\xi_2|\les 1,\tau-\xi_2^2\sim
1}f(\xi_2,\tau)d\xi_2 d\tau.
\end{align*}
Making a change of variable $\eta_1=-\sqrt{\tau-\xi_2^2}$,
$d\tau=2\eta_1 d\eta_1$, we obtain
\begin{align*}
I_1(f)(t,x)=&i\sgn(x_1-y_1)\int_{\R^2}
e^{it(\eta_1^2+\xi_2^2)}e^{i(x_1\eta_1+x_2\xi_2)}\\
&\cdot e^{-iy_1\eta_1}1_{|\xi_2|\les 1,\eta_1\sim
1}f(\xi_2,\eta_1^2+\xi_2^2)d\xi_2 d\eta_1.
\end{align*}
Thus, by Lemma \ref{lem:max2d} we get
\begin{align*}
\norm{I_1(f)}_{_{L_{x_1}^2L_{x_2,t}^{\infty}L^2_\theta}}\les
&\norm{1_{|\xi_2|\les 1}\cdot1_{\eta_1\sim
1}f(\xi_2,\eta_1^2+\xi_2^2)}_{L^2}\les \norm{f}_2.
\end{align*}
We complete the proof of the lemma.
\end{proof}

\section{Cubic Derivative NLS}

In this section we prove Theorem \ref{thm:2cDNLS}. The ideas is from
\cite{BIKT}. First, we define the main dyadic function space $F_k$
and $N_k$ for $k\in \Z$. If $f(x,t)\in L^2(\R^2\times \R)$ has
spatial frequency localized in $\{|\xi|\sim 2^k\}$, define
\begin{align*}
\norm{f}_{F_{k}}=&\norm{f}_{L_t^\infty
L_x^2}+\norm{f}_{L_t^4 L_x^4}+2^{k/6}\sup_{|j-k|\leq 20}\sup_{\ve{e}\in \cir^1}\norm{P_{j,\ve e} f}_{L_{\ve e}^{6,3}}\\
&+2^{-k/2}\sup_{\ve{e}\in
\cir^1}\norm{f}_{L_{\ve{e}}^{2,\infty}L_\theta^2}+2^{k/2}\sup_{|j-k|\leq
20}\sup_{\ve{e}\in
\cir^1}\norm{P_{j,\ve e} (A_\beta f)}_{L_{\ve{e}}^{\infty,2}L_\beta^2},\\
\norm{f}_{G_{k}}=&\norm{f}_{L_t^\infty L_x^2}+\norm{f}_{L_t^4
L_x^4}+2^{-k/2}\sup_{\ve{e}\in
\cir^1}\norm{f}_{L_{\ve{e}}^{2,\infty}L_\theta^2},\\
\norm{f}_{N_{k}}=&\inf_{f=f_1+f_2+f_3+f_4}(\norm{f_1}_{L_{t,x}^{4/3}}+2^{k/6}\norm{f_2}_{L_{\ve
e_1}^{3/2,6/5}}\\
&+2^{k/6}\norm{f_3}_{L_{\ve e_2}^{3/2,6/5}}+2^{-k/2}\sup_{\ve{e}\in
\cir^1}\norm{f_4}_{L_{\ve{e}}^{1,2}}).
\end{align*}
Then we define the space $F^{s},N^{s}$ with the following norm
\begin{align*}
\norm{u}_{F^{s}}=&\sum_{k\in \Z}2^{ks}(\norm{P_k
u}_{F_{k}}+\norm{P_k
\partial_\theta u}_{F_{k}}):=\sum_{k\in \Z}2^{ks}\norm{P_k
u}_{\widetilde F_{k}},\\
\norm{u}_{G^{s}}=&\sum_{k\in \Z}2^{ks}(\norm{P_k
u}_{G_{k}}+\norm{P_k
\partial_\theta u}_{G_{k}}):=\sum_{k\in \Z}2^{ks}\norm{P_k
u}_{\widetilde G_{k}},\\
\norm{u}_{N^{s}}=&\sum_{k\in \Z}2^{ks}(\norm{P_k
u}_{N_{k}}+\norm{P_k \partial_\theta u}_{N_{k}}):=\sum_{k\in
\Z}2^{ks}\norm{P_k u}_{\widetilde N_{k}}.
\end{align*}
Note that to use the spherically averaged maximal function estimate,
we need the spherically averaged local smoothing estimate.

\begin{lem}[Linear estimates] Assume $u,F,u_0$ solves the following
equation
\begin{align*}
i\partial_tu+\Delta u=&F,\quad u(0,x)=u_0.
\end{align*}
Then for any $s\in \R$, we have
\[\norm{u}_{F^{s}}=\norm{u_0}_{\dot B_{2,1,\theta}^{s,1}}+\norm{F}_{N^{s}}.\]
\end{lem}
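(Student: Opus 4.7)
The plan is to split via Duhamel into a homogeneous piece $e^{it\Delta}u_0$ and a retarded piece $-i\int_0^t e^{i(t-s)\Delta}F(s)\,ds$, bound each in $F^s$ separately, and sum dyadically in $k$ with the $2^{ks}$ weight. Because $e^{it\Delta}$ commutes with every rotation $A_\beta$, it commutes with $\p_\theta$; hence the spherical-derivative components $P_k\p_\theta u$ inside $\widetilde F_k$ reduce immediately to the plain $P_k u$ estimates applied to the data $\p_\theta u_0$ and forcing $\p_\theta F$. It therefore suffices to prove, for each $k\in\Z$, the two frequency-localized bounds
\[
\|e^{it\Delta}P_k u_0\|_{F_k}\;\lesssim\;\|P_k u_0\|_{L^2},\qquad \Bigl\|\int_0^t e^{i(t-s)\Delta}P_k F(s)\,ds\Bigr\|_{F_k}\;\lesssim\;\|P_k F\|_{N_k},
\]
and then take the $\ell^1$-Besov sum in $k$.

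For the homogeneous estimate, each of the five components of $F_k$ is a standard linear bound after scaling from frequency one. The $L^\infty_t L^2_x$ piece is $L^2$-unitarity; the $L^4_{t,x}$ piece is the 2D Strichartz pair $(4,4)$; the anisotropic piece $2^{k/6}\|P_{j,\ve e}e^{it\Delta}u_0\|_{L^{6,3}_{\ve e}}\lesssim\|u_0\|_2$ reduces, after a Fubini argument exploiting that $|\xi\cdot\ve e|\sim|\xi|\sim 2^k$ on the support of $P_{j,\ve e}$, to a 1D Schr\"odinger Strichartz estimate in the $\ve e$-direction; the spherically averaged maximal piece is exactly Lemma \ref{lem:max2d}; and the spherically averaged local smoothing $2^{k/2}\|P_{j,\ve e}(A_\beta e^{it\Delta}u_0)\|_{L^{\infty,2}_{\ve e}L^2_\beta}\lesssim\|u_0\|_2$ follows from the classical Kenig--Ponce--Vega local smoothing applied to the rotated datum $A_\beta u_0$, whose $L^2$ norm is $\beta$-independent, so the subsequent $L^2_\beta$ integration over the compact circle $\cir^1$ contributes only an absolute constant.

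For the inhomogeneous estimate, each input norm in $N_k$ is paired with the appropriate output norms of $F_k$ via a $TT^*$ argument together with the Christ--Kiselev lemma (to preserve the retarded time cut-off). The $L^{4/3}_{t,x}$ piece is standard dual Strichartz and produces the $L^\infty_t L^2_x$ and $L^4_{t,x}$ outputs; the $L^{3/2,6/5}_{\ve e_j}$ pieces produce the anisotropic $L^{6,3}_{\ve e}$ output by combining two homogeneous anisotropic estimates across $TT^*$, following the scheme of \cite{BIKT,ywang}; the $L^{1,2}_{\ve e}$ piece (with the $\sup_{\ve e}$) produces the spherically averaged maximal output directly through Lemma \ref{lem:max2din}, and produces the spherically averaged local smoothing output via the inhomogeneous Kenig--Ponce--Vega local smoothing $\|P_{j,\ve e}u\|_{L^{\infty,2}_{\ve e}}\lesssim 2^{-k}\|F\|_{L^{1,2}_{\ve e}}$ applied pointwise in the rotation angle $\beta$; the weights $2^{k/2}$ in $F_k$ and $2^{-k/2}$ in $N_k$ match the $2^{-k}$ two-derivative gain.

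The main technical point is matching the angular $L^2_\theta$ and $L^2_\beta$ outputs in $F_k$ against inputs in $N_k$ that carry no explicit angular norm. The only genuinely non-standard ingredient is the spherical maximal output, which is already packaged as the estimate $L^{1,2}_{\ve e}\to L^{2,\infty}_{\ve e}L^2_\theta$ by Lemma \ref{lem:max2din}; the local-smoothing side is angularly uniform by the $SO(2)$-equivariance of $e^{it\Delta}$, so the $L^2_\beta$ integration over the compact circle only costs a constant. Once all dyadic blocks are controlled this way, summing in $k$ with the $2^{ks}$ weight and the $\ell^1$ Besov summation closes the lemma.
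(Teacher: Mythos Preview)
Your proposal is correct and follows essentially the same approach as the paper. Both reduce to the dyadic bound $\|P_k u\|_{F_k}\lesssim\|P_k u_0\|_2+\|P_k F\|_{N_k}$, handle the $\partial_\theta$ component via $SO(2)$-equivariance of $e^{it\Delta}$, invoke Lemmas~\ref{lem:max2d}--\ref{lem:max2din} together with Christ--Kiselev for the maximal-function output, and treat the spherically averaged local-smoothing output by applying the standard local smoothing pointwise in the rotation angle; the paper makes the Minkowski step $\|P_{k,\ve e}A_\beta u\|_{L_{\ve e}^{\infty,2}L^2_\beta}\les\|P_{k,\ve e}A_\beta u\|_{L^2_\beta L_{\ve e}^{\infty,2}}$ and the rotation-invariance of $\sup_{\ve e}\|A_\beta F\|_{L_{\ve e}^{1,2}}$ explicit, which your phrase ``the $L^2_\beta$ integration over the compact circle contributes only an absolute constant'' compresses.
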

\begin{proof}
By the definition, it suffices to show
\begin{align}
\norm{P_{k}u}_{F_{k}}\les \norm{P_ku_0}_2+ \norm{P_kF}_{N_{k}}.
\end{align}
Since $\Delta$ commutes with rotation and the local smoothing
estimate (see \cite{IK}), we have
\begin{align*}
\norm{P_{k,\ve e}A_\theta u}_{L_{\ve e}^{\infty,2}L^2_\theta}\les&
\norm{P_{k,\ve e}A_\theta u}_{L^2_\theta L_{\ve e}^{\infty,2}}\les
2^{-k/2}\norm{A_\theta u_0}_{L^2_\theta L^2}+2^{-k}\norm{\sup_{\ve
e\in \cir^1}\norm{A_\theta
F}_{L_{\ve e}^{1,2}}}_{L_\theta^2}\\
\les&2^{-k/2}\norm{u_0}_2+2^{-k}\sup_{\ve e\in
\cir^1}\norm{F}_{L_{\ve e}^{1,2}}.
\end{align*}
Similarly, in the above inequality we can replace $\sup_{\ve e\in
\cir^1}\norm{F}_{L_{\ve e}^{1,2}}$ by $\sup_{\ve e\in
\cir^1}\norm{F}_{L_{\ve e}^{3/2,6/5}}$ and
$\norm{F}_{L_{x,t}^{4/3}}$. The other components except for the
maximal function follow from the known linear estimate. For the
maximal function component, we use Lemma
\ref{lem:max2d}-\ref{lem:max2din} and the Christ-Kiselev lemma
\cite{CK} (or Lemma 7.3 in \cite{BIKT}).
\end{proof}

To prove Theorem \ref{thm:2cDNLS}, by the standard iteration method,
it suffices to show the trilinear estimates. We need the following
lemma.

\begin{lem}\label{lem:L2est}
Assume $k_1,k_2\in \Z$. Then
\begin{align*}
&\norm{P_{k_1}uP_{k_2}\bar
v}_{L_{t,x}^{2}}+\norm{P_{k_1}\partial_\theta uP_{k_2}\bar
v}_{L_{t,x}^{2}}+\norm{P_{k_1}uP_{k_2}\partial_\theta\bar
v}_{L_{t,x}^{2}}\\
&\les 2^{k_1/2}2^{-k_2/2}\norm{P_{k_1}u}_{\widetilde
G_{k_1}}\norm{P_{k_2}v}_{\widetilde F_{k_2}}.
\end{align*}
\end{lem}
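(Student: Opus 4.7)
The plan is to prove all three bounds by the same mechanism: an $SO(2)$-averaging identity in the angular variable, a directional H\"older decomposition in $(x,t)$ placing one factor in the spherical maximal-function norm (where Theorem \ref{thm:max} gives the gain $2^{k_1/2}$) and the other in the spherical local-smoothing norm (which gives $2^{-k_2/2}$), together with the Sobolev embedding $H^1(\cir^1)\hookrightarrow L^\infty(\cir^1)$ to convert an $L^\infty_\theta$ into a manageable $L^2_\theta+\|\p_\theta\cdot\|_{L^2_\theta}$ combination.

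First I would decompose $P_{k_1}u=P_{k_1,\ve e_1}\Theta_{k_1}^1u+P_{k_1,\ve e_2}\Theta_{k_1}^2u$ and similarly for $P_{k_2}\bar v$ via \eqref{eq:Pkdec}, and treat the four resulting cross terms separately. For a cross term whose second factor is $P_{k_2,\ve e_b}$-localized, I pick the H\"older direction $\ve e=\ve e_b$: the local smoothing in the definition of $F_{k_2}$ is only available in a direction aligned with the angular Littlewood-Paley piece, while the maximal function in $G_{k_1}$ has a supremum over $\ve e$ and is direction-independent. Writing $f,g$ for the two generic factors, the $SO(2)$-invariance identity
\[\|fg\|_{L^2_{x,t}}^2=\frac{1}{2\pi}\int_0^{2\pi}\|(A_\theta f)(A_\theta g)\|_{L^2_{x,t}}^2\,d\theta,\]
combined with $L^2_{x,t}\subset L^{2,\infty}_{\ve e_b}\cdot L^{\infty,2}_{\ve e_b}$ at each $\theta$ and a H\"older in $\theta$ putting $L^2_\theta$ on the maximal-function factor and $L^\infty_\theta$ on the local-smoothing factor, yields after a Minkowski swap to move $L^\infty_\theta$ past $L^2_{x_2,t}$ the inequality
\[\|fg\|_{L^2_{x,t}}\les \|f\|_{L^{2,\infty}_{\ve e_b}L^2_\theta}\cdot\bigl(\|g\|_{L^{\infty,2}_{\ve e_b}L^2_\theta}+\|\p_\theta g\|_{L^{\infty,2}_{\ve e_b}L^2_\theta}\bigr).\]
Since $\Theta_k^a$ and $P_{k,\ve e_a}$ are bounded on the anisotropic $L^{p,q}_{\ve e}L^r_\theta$ spaces (by Lemma \ref{lem:Pkbound} and kernel $L^1$ bounds), the first factor is bounded by $2^{k_1/2}\|P_{k_1}u\|_{\widetilde G_{k_1}}$ and the second by $2^{-k_2/2}\|P_{k_2}v\|_{\widetilde F_{k_2}}$.

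To produce the three terms on the LHS simultaneously I distribute the Sobolev trade asymmetrically: for $\|fg\|_{L^2_{x,t}}$ and $\|(\p_\theta f)g\|_{L^2_{x,t}}$ I place the trade on $g$, so only $\p_\theta v$ enters through $\widetilde F_{k_2}$ while any $\p_\theta u$ enters through $\widetilde G_{k_1}$; for $\|f(\p_\theta g)\|_{L^2_{x,t}}$ I place the trade on $f$, so that $\p_\theta u$ enters through $\widetilde G_{k_1}$ and $\p_\theta v$ through $\widetilde F_{k_2}$. In each case one reaches the claimed bound $2^{k_1/2}2^{-k_2/2}\|P_{k_1}u\|_{\widetilde G_{k_1}}\|P_{k_2}v\|_{\widetilde F_{k_2}}$. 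The main obstacle I expect is the bookkeeping of the Sobolev trade against the $\p_\theta$ already present on the LHS: one must never apply $\p_\theta^2$ to a single factor, since $\widetilde G_k$ and $\widetilde F_k$ carry only one degree of angular regularity, and the three-way case split above is engineered precisely to avoid that collision.
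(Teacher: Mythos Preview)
Your overall strategy is essentially that of the paper: introduce an $SO(2)$ average, split via H\"older into a spherical maximal-function factor and a spherical local-smoothing factor, and use the Sobolev embedding $H^1(\cir^1)\hookrightarrow L^\infty(\cir^1)$ on whichever factor does \emph{not} already carry a $\p_\theta$. The three-way case split you describe for placing the Sobolev trade is exactly what the paper does for the terms $I$ and $II$.

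There is, however, a genuine gap in your implementation. You decompose $P_{k_2}\bar v=\sum_b P_{k_2,\ve e_b}\Theta^b_{k_2}\bar v$ \emph{before} introducing the $SO(2)$ average, and then claim that the resulting factor $\|P_{k_2,\ve e_b}\Theta^b_{k_2}\bar v\|_{L^{\infty,2}_{\ve e_b}L^2_\theta}$ (or its $\p_\theta$ version) is controlled by the local-smoothing component of $F_{k_2}$. But that component is
\[
\sup_{|j-k_2|\le 20}\sup_{\ve e}\norm{P_{j,\ve e}(A_\beta\, P_{k_2}v)}_{L^{\infty,2}_{\ve e}L^2_\beta},
\]
where $P_{j,\ve e}$ acts on the \emph{rotated} function $A_\beta(P_{k_2}v)$; the projector direction $\ve e$ is fixed independently of $\beta$ and coincides with the direction of the anisotropic norm. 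In your expression the rotation hidden in the $L^2_\theta$ norm acts \emph{after} $P_{k_2,\ve e_b}$, and since $A_\beta P_{k_2,\ve e_b}=P_{k_2,A_\beta\ve e_b}A_\beta$, the effective projection direction $A_\beta\ve e_b$ rotates with $\beta$ and no longer matches the fixed direction $\ve e_b$ of the norm. Lemma~\ref{lem:Pkbound} only lets you \emph{remove} $P_{k_2,\ve e_b}\Theta^b_{k_2}$, leaving $\|P_{k_2}\bar v\|_{L^{\infty,2}_{\ve e_b}L^2_\theta}$, which is not a component of $F_{k_2}$ either (decomposing it afresh produces a piece $P_{k_2,\ve e_{b'}}$ with $b'\ne b$, again misaligned with $\ve e_b$).

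The fix is exactly the paper's order of operations: first write
\[
\norm{P_{k_1}u\,P_{k_2}\bar v}_{L^2_{t,x}}=\norm{P_{k_1}(A_\beta u)\cdot P_{k_2}(A_\beta\bar v)}_{L^2_{t,x}L^2_\beta},
\]
\emph{then} apply \eqref{eq:Pkdec} to $P_{k_2}(A_\beta\bar v)$, and only \emph{then} perform the H\"older splitting in $(x_1,x_2,t,\beta)$ with the direction $\ve e_j$ matching the piece $P_{k_2,\ve e_j}(A_\beta\bar v)$. With this order the local-smoothing factor is literally $\|P_{k_2,\ve e_j}(A_\beta\bar v)\|_{L^{\infty,2}_{\ve e_j}L^2_\beta}$, which $F_{k_2}$ controls directly. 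Your directional decomposition of $P_{k_1}u$ is unnecessary (the maximal-function component of $G_{k_1}$ already carries a $\sup_{\ve e}$) but harmless.
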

\begin{proof}
Since $A_\beta$ commute with $P_k$, and by Lemma \ref{lem:Pkbound}
we have
\begin{align*}
&\norm{P_{k_1}uP_{k_2}\bar v}_{L_{t,x}^{2}}\\
=&\norm{P_{k_1}A_\beta
uP_{k_2}A_\beta \bar v}_{L_{t,x}^{2}L_\beta^2}\nonumber\\
\les& \norm{P_{k_1}A_\beta uP_{k_2,\ve e_1}\Theta_{k_2}^1A_\beta \bar
v}_{L_{t,x}^{2}L_\beta^2}+\norm{P_{k_1}A_\beta uP_{k_2,\ve
e_2}\Theta_{k_2}^2A_\beta \bar v}_{L_{t,x}^{2}L_\beta^2}\nonumber\\
\les& \norm{P_{k_1}A_\beta u}_{L_{\ve
e_1}^{2,\infty}L_\beta^\infty}\norm{P_{k_2,\ve e_1}A_\beta \bar
v}_{L_{\ve e_1}^{\infty,2}L_\beta^2}+\norm{P_{k_1}A_\beta u}_{L_{\ve
e_2}^{2,\infty}L_\beta^\infty}\norm{P_{k_2,\ve e_2}A_\beta \bar
v}_{L_{\ve e_2}^{\infty,2}L_\beta^2}\nonumber\\
\les&2^{k_1/2}2^{-k_2/2}\norm{P_{k_1}u}_{\widetilde
G_{k_1}}\norm{P_{k_2}v}_{\widetilde F_{k_2}}.
\end{align*}
For the other component, we have
\begin{align*}
&\norm{P_{k_1}\partial_\theta uP_{k_2}\bar
v}_{L_{t,x}^{2}}+\norm{P_{k_1}uP_{k_2}\partial_\theta\bar
v}_{L_{t,x}^{2}}\\
=& \norm{P_{k_1}\partial_\beta(A_\beta u)\cdot P_{k_2}A_\beta \bar
v]}_{L_{t,x}^{2}L_\beta^2}+\norm{P_{k_1}(A_\beta u)\cdot
P_{k_2}\partial_\beta(A_\beta \bar v)]}_{L_{t,x}^{2}L_\beta^2}\\
:=&I+II.
\end{align*}
For $I$, by the Sobolev embedding $H^1_{\theta}(\T^1)\hookrightarrow
L_\theta^\infty(\T^1)$ we have
\begin{align*}
I \les& \norm{P_{k_1}\partial_\beta(A_\beta u)\cdot P_{k_2,\ve
e_1}A_\beta \bar
v}_{L_{t,x}^{2}L_\beta^2}+\norm{P_{k_1}\partial_\beta(A_\beta
u)\cdot P_{k_2,\ve
e_2}A_\beta \bar v}_{L_{t,x}^{2}L_\beta^2}\nonumber\\
\les& \norm{P_{k_1}\partial_\beta(A_\beta u)}_{L_{\ve
e_1}^{2,\infty}L_\beta^2}\norm{P_{k_2,\ve e_1}A_\beta \bar
v}_{L_{\ve
e_1}^{\infty,2}L_\beta^\infty}\\
&+\norm{P_{k_1}\partial_\beta(A_\beta u)}_{L_{\ve
e_2}^{2,\infty}L_\beta^2}\norm{P_{k_2,\ve e_2}A_\beta \bar
v}_{L_{\ve e_2}^{\infty,2}L_\beta^\infty}\nonumber\\
\les&2^{k_1/2}2^{-k_2/2}\norm{P_{k_1}u}_{\widetilde
G_{k_1}}\norm{P_{k_2}v}_{\widetilde F_{k_2}}.
\end{align*}
For $II$, we have
\begin{align*}
II \les& \norm{P_{k_1}(A_\beta u)\cdot P_{k_2,\ve
e_1}\partial_\beta(A_\beta \bar
v)}_{L_{t,x}^{2}L_\beta^2}+\norm{P_{k_1}(A_\beta u)\cdot P_{k_2,\ve
e_2}\partial_\beta(A_\beta \bar v)}_{L_{t,x}^{2}L_\beta^2}\nonumber\\
\les& \norm{P_{k_1}(A_\beta u)}_{L_{\ve
e_1}^{2,\infty}L_\beta^\infty}\norm{P_{k_2,\ve
e_1}\partial_\beta(A_\beta \bar v)}_{L_{\ve
e_1}^{\infty,2}L_\beta^2}\\
&+\norm{P_{k_1}(A_\beta u)}_{L_{\ve
e_2}^{2,\infty}L_\beta^\infty}\norm{P_{k_2,\ve
e_2}\partial_\beta(A_\beta \bar
v)}_{L_{\ve e_2}^{\infty,2}L_\beta^2}\nonumber\\
\les&2^{k_1/2}2^{-k_2/2}\norm{P_{k_1}u}_{\widetilde
G_{k_1}}\norm{P_{k_2}v}_{\widetilde F_{k_2}}.
\end{align*}
We complete the proof of the lemma.
\end{proof}

\begin{lem}[Nonlinear estimates]\label{lem:tri}
Assume $i=1,2$, $s\geq 1/2$. Then
\begin{align*}
\norm{u\bar v \partial_{x_i}w}_{N^{s}}\les&
\norm{u}_{F^{s}}\norm{v}_{F^{1/2}}\norm{w}_{F^{1/2}}+\norm{u}_{F^{1/2}}\norm{v}_{F^{s}}\norm{w}_{F^{1/2}}+\norm{u}_{F^{1/2}}\norm{v}_{F^{1/2}}\norm{w}_{F^{s}}.
\end{align*}
\end{lem}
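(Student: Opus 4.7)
The plan is a standard multilinear case analysis after Littlewood-Paley decomposition. I would expand
\[
u\bar v\,\p_{x_i}w=\sum_{k_1,k_2,k_3\in\Z}P_{k_1}u\cdot\overline{P_{k_2}v}\cdot\p_{x_i}P_{k_3}w
\]
and bound $\sum_{k}2^{ks}\norm{P_k(\,\cdot\,)}_{N_k}$. Frequency localization restricts the inner sum to $k\le\max(k_j)+O(1)$ with at least two of the $k_j$ comparable to the maximum. The angular derivative appearing in the $F^s$ and $N^s$ norms is dealt with by Leibniz; the only subtlety is the commutator $[\p_\theta,\p_{x_i}]=\pm\p_{x_{3-i}}$, which produces further terms of the same trilinear shape and is therefore harmless. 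In every subcase the output is placed in the $L^{4/3}_{t,x}$ component of $N_k$ and estimated by H\"older $L^{4/3}=L^2\cdot L^4$, with the $L^2$ factor supplied by Lemma~\ref{lem:L2est} (which carries the crucial bilinear gain $2^{(k_{\min}-k_{\max})/2}$ and also absorbs one $\p_\theta$ on one of its two factors) and the $L^4$ factor supplied by the Strichartz component of $F_k$.

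I would then distinguish subcases by the frequency distribution. When two factors of highest frequency dominate and the derivative falls on the low one ($k_1\sim k_2\ge k_3$, hence $k\les k_1$), Lemma~\ref{lem:L2est} gives an $O(1)$ bilinear factor, the $L^4$ on $\p_{x_i}P_{k_3}w$ costs $2^{k_3}$, and placing the $F^s$ weight on either $u$ or $v$ lets the $k_3$-sum converge since $k_3\le k_1$. When the derivative falls on the highest-frequency factor ($k_3\sim\max(k_j)\ge$ others), the $F^s$ weight must be placed on $w$ (third RHS term) and Lemma~\ref{lem:L2est} applied to the two lower factors yields a gain $2^{(k_{\min}-k_{\max})/2}$ that precisely offsets the $2^{k_3}$ derivative loss after accounting for the $F^s$ normalization on $w$. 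The mirror situation, in which one of $u,v$ is lowest and the derivative is on $w$ comparable to the other, is handled by the first or second RHS term and Lemma~\ref{lem:L2est} applied to the low factor paired with $\p_{x_i}P_{k_3}w$, using the local-smoothing component of $\widetilde F_{k_3}$ (which absorbs the derivative loss by a $2^{-k_3/2}$). The all-high case $k_1\sim k_2\sim k_3$ is the simplest and closes by an $\ell^1\times\ell^\infty\times\ell^\infty$ summation on a single dyadic index.

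All the anisotropy and the angular $L^q_\theta$ exponents are handled as in the proof of Lemma~\ref{lem:L2est}: one views the scalar norms as $\beta$-independent elements of $L^2_\beta$, decomposes $P_{k_j}=P_{k_j,\ve e_1}\Theta_{k_j}^1+P_{k_j,\ve e_2}\Theta_{k_j}^2$ to access directional local smoothing on the high-frequency factors, and uses the Sobolev embedding $H^1_\theta\hookrightarrow L^\infty_\theta$ on $\cir^1$ on two of the three factors to free up an $L^2_\theta$ on the third. This is precisely the role of the extra $\p_\theta$ component in the definitions of $\widetilde F_k$ and $\widetilde G_k$. The \textbf{main technical obstacle} I anticipate is the ``derivative on the highest frequency'' subcase: neither Strichartz alone nor local smoothing alone kills the $2^{k_3}$ loss, and one must both invoke Lemma~\ref{lem:L2est} with the correct pairing (maximal function on the low factor, local smoothing on the high one) \emph{and} select the appropriate RHS term so that the $F^s$ weight lands on the highest-frequency factor. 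The Leibniz bookkeeping for $\p_\theta$ and the commutator $[\p_\theta,\p_{x_i}]$ contribute only further terms of the same trilinear form and are absorbed by the same procedure.
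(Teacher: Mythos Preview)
Your overall architecture (Littlewood--Paley, Leibniz for $\p_\theta$, the commutator $[\p_\theta,\p_{x_i}]$) matches the paper, but there is a genuine gap in the claim that ``in every subcase the output is placed in the $L^{4/3}_{t,x}$ component of $N_k$''. This fails precisely in the high-output regime $k_1\le k_2\ll k_3\sim k_4$ (the paper's Case~3). If you write $L^{4/3}=L^2\cdot L^4$ and apply Lemma~\ref{lem:L2est} to the two lower factors, the gain is $2^{(k_1-k_2)/2}$, which has nothing to do with $k_3$ and cannot ``precisely offset the $2^{k_3}$ derivative loss'' as you assert; the resulting coefficient is $2^{k_4/2}2^{(k_1-k_2)/2}2^{k_3}\sim 2^{3k_3/2+(k_1-k_2)/2}$, a factor $2^{k_3-k_2}$ too large. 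Pairing instead a low factor with $\p_{x_i}P_{k_3}w$ in Lemma~\ref{lem:L2est} and putting the remaining low factor in $L^4$ gives coefficient $2^{k_4/2}2^{(k_1+k_3)/2}\sim 2^{k_3+k_1/2}$, still a factor $2^{(k_3-k_2)/2}$ too large; the sum over $k_1,k_2\ll k_3$ then diverges because the plain $L^4$ Strichartz norm carries no dyadic weight.

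The paper closes this case not with $L^{4/3}$ but with the local-smoothing dual component $2^{-k_4/2}\norm{\cdot}_{L^{1,2}_{\ve e}}$ of $N_{k_4}$: one writes $L^{1,2}_{\ve e}=L^2_{t,x}\cdot L^{2,\infty}_{\ve e}$, puts $P_{k_1}u\,\p_{x_i}P_{k_3}w$ in $L^2$ via Lemma~\ref{lem:L2est} (cost $2^{(k_1+k_3)/2}$), and places $P_{k_2}v$ in the maximal-function space $L^{2,\infty}_{\ve e}$ (cost $2^{k_2/2}$ after $H^1_\theta\hookrightarrow L^\infty_\theta$). The $2^{-k_4/2}$ from the $N_{k_4}$ component cancels the $2^{k_4/2}$ weight, and the remaining $2^{(k_1+k_2+k_3)/2}$ is exactly what is needed for the $\ell^1$ sums. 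In the intermediate case $k_1\ll k_4\lesssim k_2\sim k_3$ the paper likewise uses the $L^{3/2,6/5}_{\ve e}$ component of $N_{k_4}$ paired with the $L^{6,3}_{\ve e}$ smoothing norm on $P_{k_3}w$, rather than $L^{4/3}$. So the components of $N_k$ other than $L^{4/3}$ are not decorative: they are essential in exactly the subcases where the derivative sits on the dominant frequency and the output frequency is comparable to it.
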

\begin{proof}
We only prove the case $s=1/2$, since the other case are similar. By
the definition, we have
\begin{align*}
&\norm{u\bar v
\partial_{x_i}w}_{N^{1/2}}\\
=&\sum_{k_4}2^{k_4/2}(\norm{P_{k_4}[u\bar
v \partial_{x_i}w]}_{N_{k_4}}+\norm{\partial_\theta P_{k_4}[u\bar
v \partial_{x_i}w]}_{N_{k_4}})\nonumber\\
\leq&\sum_{k_1,k_2,k_3,k_4}2^{k_4/2}(\norm{P_{k_4}[P_{k_1}uP_{k_2}\bar
v
\partial_{x_i}P_{k_3}w]}_{N_{k_4}}+\norm{\partial_\theta P_{k_4}[P_{k_1}uP_{k_2}\bar
v
\partial_{x_i}P_{k_3}w]}_{N_{k_4}})\\
:=&I+II.
\end{align*}
We will estimate the sum above case by case, according to the type
of frequency interactions. By symmetry, we may assume $k_1\leq k_2$.
We also assume that $k_2\leq k_3$, namely the derivative falls on
the largest frequency, since the other case $k_2> k_3$ can be
handled similarly.

{\bf Case 1:} $k_4\leq k_1+200$.

For this case, we use the Strichartz norm $L^{4/3}$ for
$N_{k,\alpha}$. By the properties of Fourier support of input
functions, we may assume $k_3\leq k_2+300$. Thus we have
\begin{align*}
I \les& \sum_{k_i: k_4\leq
\min(k_1,k_2,k_3)+5}2^{k_4/2}\norm{P_{k_4}[P_{k_1}uP_{k_2}\bar v
\partial_{x_i}P_{k_3}w]}_{L_{t,x}^{4/3}}\\
\les& \sum_{k_i: k_4\leq
k_1+5}2^{k_4/2}\norm{P_{k_1}u}_{L_{T,x}^{4}}2^{k_2/2}\norm{P_{k_2}v}_{L_{T,x}^{4}}2^{k_3/2}\norm{P_{k_3}w}_{L_{t,x}^{4}}\\
\les&\norm{u}_{F^{1/2}}\norm{v}_{F^{1/2}}\norm{w}_{F^{1/2}}.
\end{align*}
The estimate for $II$ is the same as $I$, since $\partial_\theta$
commutes with $P_k$.

{\bf Case 2:} $k_1+200<k_4\leq k_2+100$.

In this case we have $k_1<k_2-100$ and $k_3\leq k_2+200$. Then we
get
\begin{align*}
I\les&
\sum_{k_i}2^{k_4/2}2^{k_4/6}(\norm{P_{k_4}[P_{k_1}uP_{k_2}\bar v
\partial_{x_i}P_{k_3,\ve e_1}w]}_{L_{\ve e_1}^{3/2,6/5}}\\
&+\norm{P_{k_4}[P_{k_1}uP_{k_2}\bar v
\partial_{x_i}P_{k_3,\ve e_2}w]}_{L_{\ve e_2}^{3/2,6/5}}):=I_1+I_2.
\end{align*}
By symmetry we only estimate $I_1$. By Lemma \ref{lem:L2est} we get
\begin{align*}
I_1\les&
\sum_{k_i}2^{k_4/2}2^{k_4/6}2^{k_3}\norm{P_{k_1}uP_{k_2}\bar
v}_{L_{t,x}^{2}}\norm{P_{k_3,\ve e_1}w}_{L_{\ve e_1}^{6,3}}\\
\les&
\sum_{k_i}2^{k_1/2}2^{k_2/2}2^{k_3/2}\norm{P_{k_1}u}_{\widetilde
F_{k_1}}\norm{P_{k_2}v}_{\widetilde
F_{k_2}}\norm{P_{k_3}w}_{\widetilde
F_{k_2}}\\
\les&\norm{u}_{F^{1/2}}\norm{v}_{F^{1/2}}\norm{w}_{F^{1/2}}.
\end{align*}
For the term $II$, we have
\begin{align*}
II\les &\sum_{k_i}\norm{P_{k_4}[\partial_\theta(P_{k_1} uP_{k_2}\bar
v)\partial_{x_i}P_{k_3}w]}_{N_{k_4}}+\sum_{k_i}\norm{P_{k_4}[P_{k_1}
uP_{k_2}\bar v
\partial_\theta\partial_{x_i}P_{k_3}w]}_{N_{k_4}}\\
:=&II_1+II_2.
\end{align*}
For the term $II_1$, as for the term $I$ we get
\begin{align*}
II_1\les& \sum_{k_i}2^{k_4/2}2^{k_3}\norm{\partial_\theta(P_{k_1}
uP_{k_2}\bar v)}_{L_{t,x}^{2}}\norm{P_{k_3}w}_{\widetilde
F_{k_3}}\\
\les&\norm{u}_{F^{1/2}}\norm{v}_{F^{1/2}}\norm{w}_{F^{1/2}}.
\end{align*}
It remains to estimate the term $II_2$. Note that $[\partial_\theta,
\partial_{x_i}]=\partial_{x_i}$, so we get
\[II_2\les \sum_{k_i}\norm{P_{k_4}[P_{k_1}
uP_{k_2}\bar v
\partial_{x_i}P_{k_3}w]}_{N_{k_4}}+\sum_{k_i}\norm{P_{k_4}[P_{k_1}
uP_{k_2}\bar v
\partial_{x_i}P_{k_3}\partial_\theta w]}_{N_{k_4}}.\]
The first term on the righthand side above is just $I$, while the
second term can be handled exactly as for $I$.

{\bf Case 3:} $k_4>\max(k_1+200, k_2+100)$.

In this case we have $|k_4-k_3|\leq 5$. For the term $I$ by Lemma
\ref{lem:L2est} and noting that
$\norm{f}_{L_{\ve{e}}^{2,\infty}}\les
\norm{f}_{L_{\ve{e}}^{2,\infty}L_\theta^\infty}$, we get
\begin{align*}
I\les&
\sum_{k_i}2^{k_4/2}2^{-k_4/2}\norm{P_{k_4}[P_{k_1}uP_{k_2}\bar v
\partial_{x_i}P_{k_3}w]}_{L_{\ve{e}}^{1,2}}\\
\les& \sum_{k_i}\norm{P_{k_1}uP_{k_3}\partial_{x_i}w}_{L_{x,t}^{2}}\norm{P_{k_2}v}_{L_{\ve{e}}^{2,\infty}}\\
\les&\norm{u}_{F^{1/2}}\norm{v}_{F^{1/2}}\norm{w}_{F^{1/2}}.
\end{align*}
For the term $II$, we have
\begin{align*}
II \les& \sum_{k_i}\norm{P_{k_4}[P_{k_1}(\p_\theta u)P_{k_2}\bar v
\partial_{x_i}P_{k_3}w]}_{L_{\ve{e}}^{1,2}}+\sum_{k_i}\norm{P_{k_4}[P_{k_1} u(P_{k_2} \p_\theta
\bar v)
\partial_{x_i}P_{k_3}w]}_{L_{\ve{e}}^{1,2}}\\
&+\sum_{k_i}\norm{P_{k_4}[P_{k_1} u P_{k_2}\bar v
\p_\theta\partial_{x_i}P_{k_3}w]}_{L_{\ve{e}}^{1,2}}:=II_1+II_2+II_3.
\end{align*}
For the term $II_1$ we have
\begin{align*}
II_1\les& \sum_{k_i}\norm{P_{k_1}(\p_\theta
u)P_{k_3}\p_{x_i}w}_{L_{x,t}^{2}}\norm{P_{k_2}v}_{L_{\ve{e}}^{2,\infty}}\les\norm{u}_{F^{1/2,1}}\norm{v}_{F^{1/2,1}}\norm{w}_{F^{1/2,1}}.
\end{align*}
Similarly, we can bound the term $II_2$. For the term $II_3$, we use
the commutator as in Case 2 and then bound as $II_1$. Thus we finish
the proof.
\end{proof}

\section{Schr\"odinger map in two dimensions}

In this section, we prove Theorem \ref{thm:Schmap}. Consider the
Schr\"odinger maps
\begin{align}\label{eq:2dSchmap}
\p_t s=s\times \Delta_x s,\quad s(0)=s_0,
\end{align}
where $s:\R^2\times \R\to \cir^2\hookrightarrow \R^3$. Using the
stereographic projection
\[u=\frac{s_1+is_2}{1+s_3},\]
we see $u$ solves the equation
\begin{align}\label{eq:2dSchmap2}
i\partial_tu+\Delta u=\frac{2\bar
u}{1+|u|^2}\sum_{i=1}^n(\p_{x_i}u)^2.
\end{align}
Conversely, if $u$ solves \eqref{eq:2dSchmap2}, then
\[s=\bigg(\frac{2\Re u}{1+|u|^2},\frac{2\Im u}{1+|u|^2},\frac{1-|u|^2}{1+|u|^2}\bigg)\]
solves \eqref{eq:2dSchmap}. Now we focus on the study of \eqref{eq:2dSchmap2}.

We define
the main dyadic function space $Z_k$ and $Z_k$ for $k\in \Z$. If
$f(x,t)\in L^2(\R^2\times \R)$ has spatial frequency localized in
$\{|\xi|\sim 2^k\}$, define
\begin{align*}
\norm{f}_{Z_{k}}=&\norm{f}_{X^{0,1/2,\infty}}+2^{-k}\norm{f}_{X^{0,1}}+\norm{f}_{L_t^\infty
L_x^2}+\norm{f}_{L_t^4 L_x^4}+2^{k\e/2}\norm{f}_{L_t^4 L_x^{\frac{4}{1+\e}}L_\theta^3}\\
&+2^{-k/2}\sup_{\ve{e}\in
\cir^1}\norm{f}_{L_{\ve{e}}^{2,\infty}L_\theta^2}+2^{k/2}\sup_{|j-k|\leq
20}\sup_{\ve{e}\in
\cir^1}\norm{P_{j,\ve e} (A_\beta f)}_{L_{\ve{e}}^{\infty,2}L_\beta^2},\\
\norm{f}_{Y_{k}}=&\norm{f}_{L_t^\infty L_x^2}+\norm{f}_{L_t^4
L_x^4}+2^{k\e/2}\norm{f}_{L_t^4 L_x^{\frac{4}{1+\e}}L_\theta^3}+2^{-k/2}\sup_{\ve{e}\in
\cir^1}\norm{f}_{L_{\ve{e}}^{2,\infty}L_\theta^2}\\
&+2^{-k}\inf_{f=f_1+f_2}(\norm{f_1}_{X^{0,1}}+\norm{f_2}_{\bar X^{0,1}}),\\
\norm{f}_{W_{k}}=&\inf_{f=f_1+f_2+f_3+f_4}(\norm{f_1}_{L_{t,x}^{4/3}}
+\norm{f_2}_{L_t^1L_x^2}+2^{-k/2}\sup_{\ve{e}\in
\cir^1}\norm{f_3}_{L_{\ve{e}}^{1,2}}+\norm{f_4}_{X^{0,-1/2,1}})\\
&+2^{-k}\norm{f}_{L^2_{t,x}},
\end{align*}
where $0<\e\ll 1$ will be a fixed universal number (e.g. $\e<0.01$ would work).
Then we define the space $Z^{s},W^{s}$ with the following norm
\begin{align*}
\norm{u}_{Z^{s}}=&\sum_{k\in \Z}2^{ks}(\norm{P_k u}_{Z_{k}}+\norm{P_k
\partial_\theta u}_{Z_{k}}):=\sum_{k\in \Z}2^{ks}\norm{P_k
u}_{\widetilde Z_{k}},\\
\norm{u}_{Y^{s}}=&\sum_{k\in \Z}2^{ks}(\norm{P_k
u}_{Y_{k}}+\norm{P_k
\partial_\theta u}_{Y_{k}}):=\sum_{k\in \Z}2^{ks}\norm{P_k
u}_{\widetilde Y_{k}},\\
\norm{u}_{W^{s}}=&\sum_{k\in \Z}2^{ks}(\norm{P_k
u}_{W_{k}}+\norm{P_k
\partial_\theta u}_{W_{k}}):=\sum_{k\in \Z}2^{ks}\norm{P_k
u}_{\widetilde W_{k}}.
\end{align*}
To prove Theorem \ref{thm:Schmap}, it
suffices to prove
\begin{thm}\label{thm:2cDNLS2}
Assume $n=2$, $u_0\in \dot B_{2,1,\theta}^{1,1}$ with
$\norm{u_0}_{\dot B_{2,1,\theta}^{1,1}}=\e_0\ll 1$. Then there
exists a unique global solution $u$ to \eqref{eq:2dSchmap2} such
that $\norm{u}_{Z^{1}}\les \e_0$. Moreover, the map $u_0\to u$ is
Lipshitz from $\dot B_{2,1,\theta}^{1,1}$ to $C(\R;\dot
B_{2,1,\theta}^{1,1})$, and scattering holds in this space.
\end{thm}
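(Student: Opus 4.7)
The plan is to run a contraction mapping argument for the Duhamel form of \eqref{eq:2dSchmap2} in a small ball of the space $Z^1$, closely following the structure of the proof of Theorem \ref{thm:2cDNLS} but with one crucial addition: the $X^{0,b,q}$ components built into $Z_k$ and $W_k$, which are the vehicles by which the null structure of the nonlinearity will be exploited. The first step is to verify the linear estimate $\norm{u}_{Z^1}\les \norm{u_0}_{\dot B_{2,1,\theta}^{1,1}}+\norm{(i\p_t+\Delta)u}_{W^1}$. The non-$X^{s,b}$ components behave exactly as in Section 4: Strichartz, local smoothing, the spherically averaged maximal bound of Lemmas \ref{lem:max2d}--\ref{lem:max2din}, and the spherically averaged Strichartz inequality from \cite{Guo2} all transfer from the free propagator to the Duhamel integral via the Christ--Kiselev lemma, while the $X^{0,b,q}$ pieces are handled by the standard modulation transference principle.

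Next, I would expand the nonlinearity as
\[
\frac{2\bar u}{1+|u|^2}\sum_{i=1}^2(\p_{x_i}u)^2 = 2\bar u\sum_{i=1}^2(\p_{x_i}u)^2 + 2\bar u\brk{\frac{1}{1+|u|^2}-1}\sum_{i=1}^2(\p_{x_i}u)^2,
\]
and further expand the second factor as a geometric series in $|u|^2$, reducing matters to (i) the cubic null-form estimate
\[
\normo{\bar u\sum_{i=1}^2(\p_{x_i}v)(\p_{x_i}w)}_{W^1}\les \norm{u}_{Z^1}\norm{v}_{Z^1}\norm{w}_{Z^1},
\]
and (ii) multilinear estimates of degree $\ge 5$ in $u,\bar u$ carrying at most two derivatives. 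The higher-order terms follow the template of Lemma \ref{lem:tri}: the two derivative factors are paired in $L^2_{t,x}$ via Lemma \ref{lem:L2est}, the remaining factors are placed in $L^4_tL^4_x$ or in the maximal-function norm, and smallness in $Z^1$ guarantees convergence of the geometric series. The embedding $H^1_\theta(\cir^1)\hookrightarrow L^\infty_\theta(\cir^1)$ ensures that the extra $\p_\theta$ distributing across the product stays under control.

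The heart of the matter is the cubic null-form estimate. Following \cite{Bej,IK}, I write $\sum_{i=1}^2(\p_{x_i}v)(\p_{x_i}w)=\tfrac12\Delta(vw)-\tfrac12\Nr(v,w)$, where $\Nr(v,w)=v\Delta w+w\Delta v-\Delta(vw)$ has symbol $2\xi_1\cdot\xi_2$. Frequency-localising all three inputs and using the resonance identity $\tau+|\xi|^2=\sum_j(\tau_j+|\xi_j|^2)-2\xi_1\cdot\xi_2$, one trades the null symbol for a sum of modulations, and thereby routes each piece either to $L^{4/3}_{t,x}$, to $L^1_tL^2_x$, to the maximal-function slot $L^{1,2}_{\ve e}$, or to the modulation slot $X^{0,-1/2,1}$ in $W_k$. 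In three and higher dimensions this closes cleanly, but in two dimensions the high-high interaction generates a logarithmic divergence arising from the angular separation of two transverse frequency caps. To tame this divergence I use the mixed-norm bound $\norm{e^{it\Delta}P_0 f}_{L^4_tL^{4/(1+\e)}_xL^3_\theta}\les \norm{f}_2$ from \cite{Guo2} (embedded in $Z_k$), which pays a harmless $2^{k\e}$ but buys an angular $L^3_\theta$; the angular exponent turns the logarithmic sum into an absolutely convergent one while the $2^{k\e}$ is absorbed by the derivative already gained from the null structure. This is precisely the step that forces the one extra degree of angular regularity in $\dot B_{2,1,\theta}^{1,1}$.

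Once the linear estimate and the two multilinear estimates are in place, a standard fixed point in a ball of radius $O(\e_0)$ in $Z^1$ produces a unique global solution, Lipschitz dependence on $u_0$, and scattering; converting $u$ back via the inverse stereographic projection recovers the statement of Theorem \ref{thm:Schmap}. The main obstacle throughout is the trilinear null-form estimate, specifically the interplay between null structure, modulation decomposition, and the 2D angular logarithmic loss. Every other ingredient is a routine variation on already-established techniques, whereas this one is the genuinely new step that the spherically averaged framework of \cite{Guo2} and of the present paper is designed to overcome.
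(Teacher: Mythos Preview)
Your overall plan matches the paper's: contraction in $Z^1$, the linear estimate, Taylor expansion of the nonlinearity, and the key trilinear bound $\norm{u\sum_i(\p_{x_i}v)(\p_{x_i}w)}_{W^1}\les\norm{u}_{Y^1}\norm{v}_{Z^1}\norm{w}_{Z^1}$ are precisely what is carried out in Section~5. Two structural differences are worth noting. First, the paper does not handle the degree $\ge 5$ terms by direct multilinear estimates as you propose; instead it proves an algebra property $\norm{fg}_{Y^1}\les\norm{f}_{Y^1}\norm{g}_{Y^1}$ (which itself needs the $X^{0,1}+\bar X^{0,1}$ component and the $\e$-gain of Lemma~\ref{lem:L2est2}), so that $\bar u(-|u|^2)^k$ collapses to a single $Y^1$ factor and the same trilinear lemma covers every $k$. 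Your direct route is viable but heavier. Second, your placement of the logarithmic loss is slightly off: in the paper the $L^4_tL^{4/(1+\e)}_xL^3_\theta$ norm from \cite{Guo2} is \emph{not} invoked inside the null-form case analysis but in the bilinear $L^2$ estimate (Lemma~\ref{lem:L2est2}), which feeds both the mandatory $2^{-k}\norm{\cdot}_{L^2}$ component of $W_k$ (Lemma~\ref{lem:nonL2est}) and the $Y^1$ algebra lemma; the null structure is exploited through the identity $2\nabla v\cdot\nabla w=Lv\cdot w+v\cdot Lw-L(vw)$ with $L=i\p_t-\Delta$ (equivalent to your resonance relation) and is resolved purely by modulation decomposition and the maximal/smoothing norms, without further angular averaging. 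So the angular regularity and the null form act in separate places, rather than the former rescuing the latter as you describe.
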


We will prove the above theorem via picard iteration argument. We need to prove some linear estimates and nonlinear estimates.

\begin{lem}[Linear estimates]
Assume $u,f,u_0$ solves the following equation
\[(i\p_t+\Delta)u=f, \quad u(0)=u_0.\]
Then we have
\begin{align}
\norm{u}_{Z_k}\les \norm{u_0}_{L_x^2}+\norm{f}_{W_k}.
\end{align}
\end{lem}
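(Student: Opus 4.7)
The plan is to mimic the linear estimate lemma of Section 4, controlling each component of the $Z_k$ norm separately using the decomposition $f = f_1 + f_2 + f_3 + f_4$ of the source in $W_k$ together with the bulk $L^2_{t,x}$ piece. Writing $u = e^{it\De}u_0 + \Lr f$ where $\Lr f(t) = -i\int_0^t e^{i(t-s)\De}f(s)\,ds$, I would bound the homogeneous and Duhamel contributions separately against each seminorm defining $Z_k$.

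For the homogeneous piece $e^{it\De}u_0$: the bounds $L^\I_t L^2_x$ and $L^4_{t,x}$ are classical Strichartz; the $L^4_tL^{4/(1+\e)}_x L^3_\theta$ bound is the generalized spherically averaged Strichartz estimate of \cite{Guo2}; the maximal function component $2^{-k/2}\sup_\ve{e}\norm{\cdot}_{L_\ve{e}^{2,\I}L^2_\theta}$ is exactly Lemma \ref{lem:max2d}; and the local smoothing component $2^{k/2}\sup_\ve{e}\norm{P_{j,\ve{e}}(A_\beta e^{it\De}u_0)}_{L_\ve{e}^{\I,2}L^2_\beta}$ follows by combining the scalar local smoothing estimate of \cite{IK} with an $L^2_\beta$ integration after applying $A_\beta$, using that $\De$ commutes with rotation (exactly as done in the earlier linear estimate of Section 4). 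The $X^{0,b,q}$ components for the free piece are trivial since $Q_j(e^{it\De}u_0) \equiv 0$.

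For the Duhamel term $\Lr f$: the contribution $f_2 \in L^1_t L^2_x$ is handled by Minkowski combined with the homogeneous bounds above; the $f_1 \in L^{4/3}_{t,x}$ piece uses dual Strichartz for the $L^\I L^2$ and $L^4 L^4$ components, together with the Christ-Kiselev lemma of \cite{CK} (as used in Lemma 7.3 of \cite{BIKT}) to upgrade the homogeneous maximal function, local smoothing, and generalized Strichartz bounds to retarded ones; the $f_3$ piece uses Lemma \ref{lem:max2din} directly for the maximal function component and the inhomogeneous local smoothing estimate of \cite{IK} for the anisotropic $L^{\I,2}_\ve{e} L^2_\beta$ component, again with Christ-Kiselev for the Strichartz components. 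The bulk $2^{-k}\norm{f}_{L^2_{t,x}}$ piece of $W_k$ directly bounds $2^{-k}\norm{u}_{X^{0,1}}$ via the Plancherel identity $\norm{u}_{X^{0,1}} = \norm{(i\p_t + \De)u}_{L^2_{t,x}} = \norm{f}_{L^2_{t,x}}$.

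The most delicate part will be the $f_4 \in X^{0,-1/2,1}$ piece and simultaneously controlling the output $X^{0,1/2,\I}$ component of $Z_k$. After dyadic decomposition in modulation, each $Q_j f_4$ can be written as a superposition of modulated free Schr\"odinger solutions at modulation scale $2^j$, inheriting all the homogeneous bounds above with a loss of $2^{-j/2}$; the $\ell^1_j$ structure of the input $X^{0,-1/2,1}$ norm matches the $2^{j/2}$ weight and absorbs the summation. The $\ell^\I_j$ structure at the output $X^{0,1/2,\I}$ is chosen precisely so that a dyadic modulation decomposition of $u$ avoids any logarithmic loss and is compatible with the Christ-Kiselev step. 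The $X^{0,1/2,\I}$ output is then controlled by the standard transfer principle (cf.~\cite{Tataru98,Tao01}) and duality against $X^{0,-1/2,1}$, and the main technical point is to verify that the modulation-dyadic summation compatibly survives together with the sup over directions $\ve{e}$ present in the anisotropic norms.
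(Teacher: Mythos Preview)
Your outline is essentially correct and follows the same strategy as the paper, which is considerably terser: it simply defers all but one component to \cite{BIKT} and then supplies only the argument for the new generalized Strichartz piece $2^{k\e/2}\norm{\cdot}_{L^4_t L^{4/(1+\e)}_x L^3_\theta}$.

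One small gap in your plan concerns precisely that new component. Citing \cite{Guo2} alone is not quite enough: the result of \cite{Guo2} gives $\norm{e^{it\De}P_0 u_0}_{L^2_t L^{6+}_x L^2_\theta}\les\norm{u_0}_2$, with only $L^2_\theta$ in the angular variable, whereas the $Z_k$ norm requires $L^3_\theta$. The paper obtains $L^4_t L^{4/(1+\e)}_x L^3_\theta$ by two interpolations: first between the \cite{Guo2} estimate and the Kato--Ozawa endpoint $\norm{e^{it\De}P_0 u_0}_{L^2_t L^\infty_x L^p_\theta}\les\norm{u_0}_2$ (valid for any $p<\infty$) to reach $L^2_t L^{2/\e}_x L^6_\theta$, and then between this and the trivial $L^\infty_t L^2_x L^2_\theta$ bound. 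You should include this interpolation step explicitly; the rest of your argument matches the paper's (implicit) reasoning.
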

\begin{proof}
Most of the estimates were given in \cite{BIKT}.
We only need to deal with the component $2^{k\e/2}\norm{f}_{L_t^4L_x^{\frac{4}{1+\e}}L_\theta^3}$. We will use the generalised Strichartz estimates proved in \cite{Guo2}. In \cite{Guo2} the author proved the following estimate
\begin{align*}
\norm{e^{it\Delta}P_0u_0}_{L_t^2L_x^{6+}L_\theta^2}\les \norm{u_0}_{L^2(\R^2)}.
\end{align*}
Interpolating the above with the following estimate proved in \cite{KaOz} (Theorem 3.1):
\begin{align*}
\norm{e^{it\Delta}P_0u_0}_{L_t^2L_x^{\infty}L_\theta^p}\les \norm{u_0}_{L^2(\R^2)}, \quad 1\leq p<\infty,
\end{align*}
we get
\begin{align*}
\norm{e^{it\Delta}P_0u_0}_{L_t^2L_x^{\frac{2}{\e}}L_\theta^6}\les \norm{u_0}_{L^2(\R^2)}.
\end{align*}
Interpolating above with the trivial estimate $\norm{e^{it\Delta}P_0u_0}_{L_t^\infty L_x^{2}L_\theta^2}\les \norm{u_0}_{L^2(\R^2)}$, we get
\begin{align*}
\norm{e^{it\Delta}P_0u_0}_{L_t^4L_x^{\frac{4}{1+\e}}L_\theta^3}\les \norm{u_0}_{L^2(\R^2)}.
\end{align*}
Then by scaling transform we complete the proof.
\end{proof}

We use Taylor's expansion to rewrite the nonlinear term: if
$\norm{u}_\infty<1$
\[\frac{2\bar
u}{1+|u|^2}\sum_{i=1}^n(\p_{x_i}u)^2=\sum_{k=0}^\infty 2\bar u
(-|u|^2)^k\sum_{i=1}^n(\p_{x_i}u)^2.\] We prove

\begin{lem} [Nonlinear estimates]
The following estimates holds
\[\norm{\bar u (-|u|^2)^{k}\sum_{i=1}^n(\p_{x_i}u)^2}_{W^1}\les C^{2k} \norm{u}_{Y^1}^{2k+1}\norm{u}_{Z^1}\norm{u}_{Z^1}.\]
\end{lem}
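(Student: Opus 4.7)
The plan is to perform a multilinear analysis generalising Lemma \ref{lem:tri} to the $(2k+3)$-linear product. Littlewood--Paley decomposing each factor, the $W^1$-norm of the product reduces to a sum
\[
\sum_{k_0,j_1,\ldots,j_{2k+1},n_1,n_2} 2^{k_0}\, \normo{P_{k_0}\bigl[\prod_{l=1}^{2k+1} P_{j_l} u_l \cdot \p_{x_i} P_{n_1} u \cdot \p_{x_i} P_{n_2} u\bigr]}_{W_{k_0}}
\]
plus the $\p_\theta$-companion. The overall strategy is to place the two derivative-carrying factors in $Z^1$ and absorb the remaining $2k+1$ passive factors into $Y^1$.

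First I would dispose of the long string of passive factors by an algebra-type estimate. Using the Sobolev embedding $H^{1,2}_\theta(\T^1)\hookrightarrow L^\infty_\theta(\T^1)$, Bernstein's inequality, and the components of $Y_k$, one obtains, after summation over the $j_l$'s, a bound of the form $C^{2k+1}\norm{u}_{Y^1}^{2k+1}$ for a suitable mixed $L^\infty$-norm of $\prod P_{j_l}u_l$. Commutators arising when $\p_\theta$ falls on the passive block are controlled because $[\p_\theta,\p_{x_i}]$ is again a first-order spatial derivative, hence produces no terms not already appearing in Lemma \ref{lem:tri}. This reduces matters to a trilinear estimate of the shape
\[
\sum_{k_0,n_1\leq n_2} 2^{k_0}\, \normo{P_{k_0}\bigl[v\cdot \p_{x_i} P_{n_1} u \cdot \p_{x_i} P_{n_2} u\bigr]}_{W_{k_0}}\les \norm{v}_{Y^1}\norm{u}_{Z^1}^2,
\]
where $v$ is the passive block.

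The resulting trilinear estimate is split by the frequency geometry of $(k_0,n_1,n_2)$ as in the proof of Lemma \ref{lem:tri}. In the non-resonant regimes (output frequency comparable to the largest input, or one input much smaller than the other two) I would use either the $L^{4/3}_{t,x}$-Strichartz component of $W_{k_0}$ paired with the $L^4_tL^4_x$ norm in $Z$, or Lemma \ref{lem:L2est} together with the spherically averaged maximal function norm $2^{-k/2}\sup_{\ve e}\norm{\cdot}_{L^{2,\infty}_{\ve e}L^2_\theta}$ in $Y_k$ and the local smoothing norm $2^{k/2}\sup_{\ve e}\norm{P_{k,\ve e}A_\beta\cdot}_{L^{\infty,2}_{\ve e}L^2_\beta}$ in $Z_k$ for the two high-frequency factors. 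These cases are treated exactly as cases 1--3 of Lemma \ref{lem:tri}.

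The main obstacle is the resonant case $n_1\sim n_2 \gg k_0$, where both derivative factors carry large frequency and their product has small output frequency; a naive bound loses a logarithm in 2D. Here I would exploit the null structure of $\sum_i (\p_{x_i} P_{n_1} u)(\p_{x_i} P_{n_2} u)$ observed in \cite{Bej}: the modulation of the product is forced to be $\gec 2^{2n_1}$, so the $X^{0,-1/2,1}$ component of $W_{k_0}$ dual-paired with the $X^{0,1/2,\infty}$ component of $Z_{n_j}$ yields a gain of one derivative. The residual logarithm is absorbed using the generalised spherically averaged Strichartz norm $2^{k\e/2}\norm{\cdot}_{L^4_t L^{4/(1+\e)}_x L^3_\theta}$ built into $Z_k$ (provided by the interpolation with the result of \cite{Guo2} established in the preceding lemma): a H\"older/Bernstein computation converts the small $\e$-powers at frequencies $n_1,n_2\gg k_0$ into a factor summable in $(n_1,n_2)$. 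Finally, summing the geometric series in the Taylor coefficient $k$ produces the stated $C^{2k}$ constant, and the $\p_\theta$-companion is handled identically via the $\widetilde Z_k,\widetilde Y_k,\widetilde W_k$ versions of the norms.
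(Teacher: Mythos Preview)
Your overall architecture---collapse the $2k+1$ passive factors via an algebra property of $Y^1$ and then prove a trilinear estimate $\norm{v\sum_i\p_{x_i}u\,\p_{x_i}u}_{W^1}\les\norm{v}_{Y^1}\norm{u}_{Z^1}^2$---is exactly the route the paper takes. Two points, however, are not right as stated.

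First, your description of the null structure is off. In the regime $n_1\sim n_2\gg k_0$ the Bejenaru null form is \emph{not} exploited as a modulation constraint; the paper instead uses the algebraic identity
\[
2\nabla v\cdot\nabla w=(i\p_t-\Delta)v\cdot w+v\cdot(i\p_t-\Delta)w-(i\p_t-\Delta)(vw),
\]
which trades the two derivatives on the inputs for one application of $L=i\p_t-\Delta$ on a single factor (or on the product). This is what makes the $2^{-k}\norm{\cdot}_{X^{0,1}}$ and $X^{0,1/2,\infty}$ components of $Z_k$ usable. A pure modulation argument (``output modulation $\gec 2^{2n_1}$, so use $X^{0,-1/2,1}$'') does not by itself close: after multiplying by the passive block and summing over $k_0$ and the modulation dyadics, one still needs the identity to handle the subcase where the bilinear piece $Q_{\le k_2+k_3}g$ has low modulation, which the paper treats through the rather delicate decomposition $I_{21},I_{22},I_{23},I_{231},I_{232},\ldots$.

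Second, you do not address the mandatory $2^{-k}\norm{\cdot}_{L^2_{t,x}}$ component of $W_k$ (it sits outside the infimum). The paper isolates this in a separate lemma (the $L^2$ trilinear estimate), and \emph{this} is where the generalised spherically averaged Strichartz norm $2^{k\e/2}\norm{\cdot}_{L_t^4L_x^{4/(1+\e)}L_\theta^3}$ is used to get the $2^{\e(k_0-n_1)}$ gain that makes the high--high $\to$ low sum in $k_0$ converge; the same $\e$-gain is also what makes the $X^{0,1}+\bar X^{0,1}$ part of the algebra lemma for $Y^1$ summable. So the $\e$-Strichartz is not a ``residual logarithm absorber'' inside the null-form step, but rather the device that closes the $L^2$ component of $W_k$ and the $X^{0,1}$ component of the algebra property.
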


The above lemma will follow from the following lemmas.

\begin{lem}
(1) If $j\geq 2k-100$ and $X$ is a space-time translation invariant
Banach space, then $Q_{\leq j}P_k$ is bounded on $X$ with bound
independent of $j,k$.

(2) For any $j,k$, $Q_{\leq j}P_{k,\ve e}$ is bounded on $L_{\ve
e}^{p, 2}$ and $Q_{\leq j}$ is bounded on $L_t^pL_x^2$ for $1\leq p
\leq \infty$, with bound independent of $j,k$.
\end{lem}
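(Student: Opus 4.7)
Part (1) is best handled by a direct $L^1_{x,t}$ kernel bound, while part (2) requires fiberwise estimates after a partial Fourier transform because the full kernel $L^1$ norm is not uniformly bounded in general without the constraint $j\geq 2k-100$. First, for (1), I rescale $(\xi,\tau)\mapsto(2^k\xi',2^{2k}\tau')$, which preserves the $L^1_{x,t}$ norm of the convolution kernel of $Q_{\leq j}P_k$ and reduces matters to $k=0$ and $m:=j-2k\geq-100$. The kernel factors as $K_m(x,t)=\wh{\chi_{\leq m}}(-t)\cdot(e^{it\Delta}P_0\delta)(x)=2^m\wt\psi(2^m t)\cdot(e^{it\Delta}P_0\delta)(x)$ for some Schwartz $\wt\psi$, and standard frequency-localized stationary phase in 2D gives $\|e^{it\Delta}P_0\delta\|_{L^1_x}\les 1+|t|$. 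Then $\|K_m\|_{L^1_{x,t}}\les\int 2^m|\wt\psi(2^m t)|(1+|t|)\,dt\les 1+2^{-m}\les 2^{100}$ whenever $m\geq-100$, and Young's inequality on any space-time translation invariant $X$ yields $\|Q_{\leq j}P_kf\|_X\les\|f\|_X$.

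For the first claim of (2), I use the conjugation identity $Q_{\leq j}=e^{it\Delta}P^{(t)}_{\leq j}e^{-it\Delta}$ (a quick Fourier computation, since in the Schr\"odinger moving frame $Q_{\leq j}$ becomes a pure time convolution) to write $Q_{\leq j}f(x,t)=\int h_j(t-s)(e^{i(t-s)\Delta}f(\cdot,s))(x)\,ds$, where $h_j(\rho)=2^j\wt\psi(2^j\rho)$ satisfies $\|h_j\|_{L^1_t}=\|\wt\psi\|_{L^1}$ independently of $j$. Since $e^{i\tau\Delta}$ is an $L^2_x$-isometry, $\|Q_{\leq j}f(\cdot,t)\|_{L^2_x}\leq(|h_j|*_t\|f(\cdot,\cdot)\|_{L^2_x})(t)$, and Young's inequality in $t$ yields $\|Q_{\leq j}f\|_{L^p_tL^2_x}\les\|f\|_{L^p_tL^2_x}$ for all $1\leq p\leq\infty$.

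For the second claim of (2), I take $\ve e=\ve e_1$ without loss of generality, write $x=\lambda\ve e_1+y$ with $y\perp\ve e_1$, and pass to the partial Fourier transform $\wt g(\lambda,\eta,\tau)=\ft_{y,t}g(\lambda\ve e_1+y,t)(\eta,\tau)$, so Plancherel gives $\|g\|_{L_{\ve e_1}^{p,2}}=\|\wt g\|_{L^p_\lambda L^2_{\eta,\tau}}$. For $g=Q_{\leq j}P_{k,\ve e_1}f$ the operator reduces, pointwise in $(\eta,\tau)$, to a 1D $\lambda$-convolution with kernel $K_{\eta,\tau}=\ft_\mu^{-1}m(\cdot,\eta,\tau)$, where $m(\mu,\eta,\tau)=\chi_{\leq j}(\tau+\mu^2+|\eta|^2)\wt\chi_k(|\mu|)\chi_k(\sqrt{\mu^2+|\eta|^2})$. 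For each $(\eta,\tau)$ in the support, $m(\cdot,\eta,\tau)$ is smooth on a $\mu$-set of total length $\ell(\eta,\tau)\les 2^k$ with $\p_\mu^N m\les\ell^{-N}$ (since $\p_\mu(\tau+\mu^2+|\eta|^2)=2\mu\sim 2^k$ and $\chi_{\leq j}$ is smooth at scale $2^j$), so $\|K_{\eta,\tau}\|_{L^1_\lambda}\les 1$ uniformly. The $p=2$ case is immediate by Plancherel, and $1\leq p\leq\infty$ follows by combining this fiberwise $L^1$ bound with the $L^2_{\eta,\tau}$-unitarity of the partial Fourier transform, interpolating between suitable endpoint estimates.

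The routine steps are the scaling and stationary-phase analysis in (1) and the unitarity argument for the first half of (2). The main obstacle is the passage, in the second half of (2), from the fiberwise $L^1_\lambda$ estimate on $K_{\eta,\tau}$ to the uniform mixed-norm $L^p_\lambda L^2_{\eta,\tau}$ bound: a naive Minkowski argument would require $\sup_{\eta,\tau}|K_{\eta,\tau}(\cdot)|\in L^1_\lambda$, which can fail because $\ell(\eta,\tau)$ varies and produces a tail $\sim 1/|\lambda|$ at large $|\lambda|$. The joint $(\lambda,\eta,\tau)$-structure of the kernel must be exploited, for example by establishing the $p=1$ and $p=\infty$ endpoints carefully (using that the adjoint operator has the same fiberwise kernel bound) and then interpolating.
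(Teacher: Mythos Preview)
Your arguments for part (1) and for the boundedness of $Q_{\leq j}$ on $L^p_tL^2_x$ are correct and essentially match the paper's: in (1) the paper also shows the space-time kernel lies in $L^1_{t,x}$ via integration by parts (your factorization $K_m(x,t)=2^m\wt\psi(2^mt)\,(e^{it\Delta}P_0\delta)(x)$ together with $\|e^{it\Delta}P_0\delta\|_{L^1_x}\les 1+|t|$ is just a more explicit version of that), and for the $L^p_tL^2_x$ bound your conjugation identity $Q_{\leq j}=e^{it\Delta}P^{(t)}_{\leq j}e^{-it\Delta}$ is a clean alternative to the paper's reduction, which takes the Fourier transform in $x$ first and then observes that the resulting operator is a pure time convolution after a harmless shift $\tau\mapsto\tau+|\xi|^2$.

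The gap is in your treatment of $Q_{\leq j}P_{k,\ve e}$ on $L^{p,2}_{\ve e}$. You correctly diagnose that a fiberwise bound $\sup_{\eta,\tau}\|K_{\eta,\tau}\|_{L^1_\lambda}\les 1$ is \emph{not} by itself enough for boundedness on $L^p_\lambda L^2_{\eta,\tau}$; what is needed is the pointwise majorant $\sup_{\eta,\tau}|K_{\eta,\tau}(\cdot)|\in L^1_\lambda$. But the obstacle you describe is not real here, and your proposed fix (interpolating between $p=1$ and $p=\infty$ via the adjoint) does not obviously work either, since both endpoints face exactly the same operator-valued kernel issue. The resolution the paper uses is simply to invoke part (1) first: since $L^{p,2}_{\ve e}$ is space-time translation invariant and $P_{k,\ve e}$ differs from $P_k$ by a one-dimensional multiplier in the $\ve e$-direction with $L^1$ kernel, part (1) already handles the range $j\geq 2k-100$. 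Once you reduce to $j\leq 2k-100$, the length $\ell(\eta,\tau)$ does \emph{not} vary: the constraint $|\tau+\mu^2+|\eta|^2|\les 2^j$ together with $|\mu|\sim 2^k$ forces $\mu$ into one or two intervals of length $\sim 2^{j-k}$, uniformly in $(\eta,\tau)$. Integration by parts then gives the uniform pointwise bound
\[
|K_{\eta,\tau}(\lambda)|\ \les\ 2^{j-k}\bigl(1+2^{j-k}|\lambda|\bigr)^{-2},
\]
so $\sup_{\eta,\tau}|K_{\eta,\tau}(\cdot)|\in L^1_\lambda$ with norm $\les 1$, and the ``naive'' Minkowski-plus-Young argument goes through for every $1\leq p\leq\infty$.
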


\begin{proof}
(1) The operator $Q_{\leq j}P_k$ corresponds to the space-time
multiplier with symbol
$\eta(\frac{\tau+|\xi|^2}{2^j})\chi(\frac{|\xi|}{2^k})$. It suffices
to show
\[\norm{\F^{-1}\eta(\frac{\tau+|\xi|^2}{2^j})\chi(\frac{|\xi|}{2^k})}_{L^1_{t,x}}\les 1, \quad \forall j,k.\]
From integration by part and the condition $j\geq 2k-100$, we get
\begin{align*}
|\int_{\R^n\times
\R}\eta(\frac{\tau+|\xi|^2}{2^j})\chi(\frac{|\xi|}{2^k})e^{ix\xi}e^{it\tau}d\xi
d\tau|\les 2^{j}(1+2^j|t|)^{-2}\cdot 2^{k}(1+2^k|x|)^{-n-1},
\end{align*}
which completes the proof.

(2) This was proved in \cite{Bej2}. In view of part (1), we may assume
$j\leq 2k-100$. By rotation, we may take $\ve e=\ve e_1$. By
Plancherel's equality, it suffices to prove
\begin{align*}
\normo{\int_{\R}e^{ix_1\xi_1}\eta(\frac{\tau+|\xi|^2}{2^j})\chi(\frac{|\xi|}{2^k})\chi(\frac{|\xi_1|}{2^k})
(\F_{x_1}f)(\xi,\tau) d\xi_1}_{L_{x_1}^p L_{\tau,\bar \xi}^2}\les
\norm{f}_{L_{x_1}^p L_{\tau,\bar \xi}^2}.
\end{align*}
Furthermore, it suffices to show
\begin{align}
\sup_{\bar
\xi,\tau}\normo{\int_{\R}e^{ix_1\xi_1}\eta(\frac{\tau+|\xi|^2}{2^j})\chi(\frac{|\xi|}{2^k})\chi(\frac{|\xi_1|}{2^k})
d\xi_1}_{L_{x_1}^1}\les 1.
\end{align}
For fixed $\bar \xi,\tau$, since $|\tau+|\bar \xi|^2+\xi_1^2|\les
2^j\ll |\xi_1|^2$, thus $\tau+|\bar \xi|^2$ is negative, and we have
either $|\xi_1-\sqrt{-\tau-|\bar \xi|^2}|\les 2^{j-k}$ or
$|\xi_1+\sqrt{-\tau-|\bar \xi|^2}|\les 2^{j-k}$. Thus $\xi_1$ varies
in a ball of size $2^{j-k}$. From integration by part and the
condition $j\leq 2k-100$, we get
\begin{align*}
|\int_{\R}e^{ix_1\xi_1}\eta(\frac{\tau+|\xi|^2}{2^j})\chi(\frac{|\xi|}{2^k})\chi(\frac{|\xi_1|}{2^k})
d\xi_1|\les 2^{j-k}(1+2^{j-k}|x_1|)^{-2}.
\end{align*}

To see $Q_{\leq j}$ is bounded on $L_t^pL_x^2$, by Plancherel's
equality it suffices to prove
\begin{align*}
\normo{\int
e^{it\tau}\eta(\frac{\tau-\xi^2}{2^j})\F_tf(\tau,\xi)d\tau}_{L_t^pL_{\xi}^2}\les
\norm{f}_{L_t^pL_\xi^2},
\end{align*}
which is equivalent to show
\begin{align*}
\normo{\int
e^{it\tau}\eta(\frac{\tau+\xi^2}{2^j})(\F_tf)(\tau+|\xi|^2,\xi)d\tau}_{L_t^pL_{\xi}^2}\les
\norm{f}_{L_t^pL_\xi^2}.
\end{align*}
The above inequality is trivial.
\end{proof}

\begin{lem}\label{lem:L2est2}
Assume $k_1,k_2,k_3\in \Z$. Then
\begin{align*}
&\norm{P_{k_3}(P_{k_1}uP_{k_2}v)}_{L_{t,x}^{2}}+\norm{P_{k_3}(P_{k_1}\p_\theta uP_{k_2}v)}_{L_{t,x}^{2}}+
\norm{P_{k_3}(P_{k_1}uP_{k_2}\p_\theta v)}_{L_{t,x}^{2}}\\
\les&
2^{\e[\min(k_1,k_2,k_3)-\max(k_1,k_2,k_3)]/2}\norm{P_{k_1}u}_{\widetilde Y_{k_1}}\norm{P_{k_2}v}_{\widetilde Y_{k_2}}.
\end{align*}
\end{lem}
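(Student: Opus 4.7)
The strategy is to prove the bilinear $L^2$ estimate via H\"older in $(t,x,\theta)$ combined with Sobolev embedding on $\T^1$ and vector-valued Bernstein in $x$, exploiting the weighted angular Strichartz norm $L^4_tL^{4/(1+\e)}_xL^3_\theta$ that appears in $\widetilde Y_k$ with weight $2^{k\e/2}$. By the symmetry $u\leftrightarrow v$ we may assume $k_1\le k_2$, and Fourier support forces $k_3\le k_2+O(1)$. We split into Case A ($k_3\ge k_1-O(1)$), where $\min(k_1,k_2,k_3)=k_1$ and $\max=k_2$ so that we want the gain $2^{\e(k_1-k_2)/2}$; and Case B ($k_3\ll k_1$), which forces $k_1\sim k_2$, so $\min=k_3$, $\max=k_2$, and we want $2^{\e(k_3-k_2)/2}$.

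\textbf{Case A.} Apply H\"older with $L^4\cdot L^4\to L^2$ in $t$, $L^{4/(1-\e)}\cdot L^{4/(1+\e)}\to L^2$ in $x$, and $L^6\cdot L^3\to L^2$ in $\theta$:
\[
\|P_{k_1}u\,P_{k_2}v\|_{L^2_{t,x}}\le\|P_{k_1}u\|_{L^4_tL^{4/(1-\e)}_xL^6_\theta}\|P_{k_2}v\|_{L^4_tL^{4/(1+\e)}_xL^3_\theta}.
\]
The second factor is directly $\le 2^{-k_2\e/2}\|P_{k_2}v\|_{\widetilde Y_{k_2}}$ from the definition. For the first factor, the Sobolev embedding $W^{1,3}(\T^1)\hookrightarrow L^\infty(\T^1)\hookrightarrow L^6(\T^1)$ trades one $\p_\theta$ (absorbed by $\widetilde Y$) to reduce to $L^3_\theta$, and vector-valued Bernstein in $x$ (valid via the Bochner-space Young inequality, since $P_{k_1}$ is a radial spatial convolution) upgrades $L^{4/(1+\e)}_x$ to $L^{4/(1-\e)}_x$ at cost $2^{k_1\e}$. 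The result is $\les 2^{k_1\e/2}\|P_{k_1}u\|_{\widetilde Y_{k_1}}$, and multiplying yields the gain $2^{\e(k_1-k_2)/2}$; the outer $P_{k_3}$ is bounded on $L^2_{t,x}$. For the two $\p_\theta$-versions in the statement, we simply place the derivative on whichever factor sits on the $L^3_\theta$-side of H\"older, so that Sobolev is never applied to a function already carrying $\p_\theta$.

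\textbf{Case B.} Here the gain must come from the low-frequency output. Apply vector-valued Bernstein to $P_{k_3}$ to get $\|P_{k_3}G\|_{L^2_xL^2_\theta}\les 2^{k_3\e}\|G\|_{L^{2/(1+\e)}_xL^2_\theta}$, and recover $L^2_\theta$ from $L^{3/2}_\theta$ via $W^{1,3/2}(\T^1)\hookrightarrow L^\infty(\T^1)\hookrightarrow L^2(\T^1)$, absorbing one $\p_\theta$. H\"older with $L^4\cdot L^4\to L^2$ in $t$, $L^{4/(1+\e)}\cdot L^{4/(1+\e)}\to L^{2/(1+\e)}$ in $x$, and $L^3\cdot L^3\to L^{3/2}$ in $\theta$ then reduces to the angular Strichartz norms in $\widetilde Y$, giving
\[
\|P_{k_3}(P_{k_1}u\,P_{k_2}v)\|_{L^2_{t,x}}\les 2^{k_3\e-k_1\e/2-k_2\e/2}\|P_{k_1}u\|_{\widetilde Y_{k_1}}\|P_{k_2}v\|_{\widetilde Y_{k_2}},
\]
which for $k_1\sim k_2$ is at most $2^{\e(k_3-k_2)}$, stronger than the required $2^{\e(k_3-k_2)/2}$ since $k_3<k_2$. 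The $\p_\theta$-versions are treated the same way, again placing the derivative on the factor that does not need to be upgraded by Sobolev.

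\textbf{Main obstacle.} The delicate point is the bookkeeping of $\p_\theta$-regularity: $\widetilde Y_k$ controls only one $\theta$-derivative per factor, so the $\p_\theta$ present in the original expression and the $\p_\theta$ generated by the Sobolev step on $\T^1$ must land on different factors. Once this pairing is arranged correctly, the H\"older--Bernstein--Sobolev chain produces exactly the claimed $2^{\e(\min-\max)/2}$ gain and only the Strichartz-type components of $\widetilde Y$ are called upon.
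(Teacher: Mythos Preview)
Your approach is essentially the paper's: both arguments rest entirely on the angular Strichartz component $2^{k\e/2}\|\cdot\|_{L^4_tL^{4/(1+\e)}_xL^3_\theta}$ of $\widetilde Y_k$, combined with Bernstein in $x$ (on the low-frequency input in Case~A, on $P_{k_3}$ in Case~B) and a Sobolev embedding on $\T^1$ to upgrade one factor out of $L^3_\theta$. The paper uses the single H\"older split $L^3_\theta\times L^\infty_\theta$ (with $W^{1,3}_\theta\hookrightarrow L^\infty_\theta$ on the factor \emph{not} carrying $\partial_\theta$) uniformly in all cases, which is marginally cleaner than your $L^6_\theta\times L^3_\theta$ in Case~A and product-level Sobolev in Case~B, but the content is the same.

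There is one small inconsistency in your Case~B. You apply $W^{1,3/2}_\theta\hookrightarrow L^2_\theta$ to the \emph{product} $P_{k_1}u\,P_{k_2}v$, so the Leibniz rule places $\partial_\theta$ on each factor in turn. For the term $P_{k_1}\partial_\theta u\cdot P_{k_2}v$ this produces $P_{k_1}\partial_\theta^2 u\cdot P_{k_2}v$, and $\widetilde Y_{k_1}$ does not control two angular derivatives. Your closing remark (``place the derivative on the factor that does not need to be upgraded by Sobolev'') is the right instinct, but it does not fit the product-Sobolev scheme you described, since there both factors are upgraded. The immediate repair---and what the paper does---is to use an $L^3_\theta\times L^\infty_\theta$ split in Case~B as well: put the factor already carrying $\partial_\theta$ in $L^3_\theta$ directly, and send the other factor to $L^\infty_\theta$ via $W^{1,3}_\theta\hookrightarrow L^\infty_\theta$. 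With that adjustment the argument closes exactly as you outlined.
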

\begin{proof}
We only estimate $\norm{P_{k_3}(P_{k_1}\p_\theta uP_{k_2}v)}_{L_{t,x}^{2}}$.
If $k_3\leq \min(k_1,k_2)$, then
\begin{align*}
\norm{P_{k_3}(P_{k_1}\p_\theta uP_{k_2}v)}_{L_{t,x}^{2}}\les&2^{\e k_3}\norm{P_{k_1}\p_\theta uP_{k_2}v}_{L_{t}^{2}L_x^{\frac{2}{1+\e}}}\\
\les&2^{\e k_3}2^{-\e (k_1+k_2)/2}\norm{P_{k_1}\p_\theta u}_{L_t^4L_x^{\frac{4}{1+\e}}L_\theta^3}\norm{P_{k_2}v}_{L_t^4L_x^{\frac{4}{1+\e}}L_\theta^\infty}\\
\les&2^{\e k_3}2^{-\e (k_1+k_2)/2}\norm{P_{k_1}u}_{\widetilde Y_{k_1}}\norm{P_{k_2}v}_{\widetilde Y_{k_2}}.
\end{align*}
If $k_1\leq \min(k_2,k_3)$, then
\begin{align*}
\norm{P_{k_3}(P_{k_1}\p_\theta uP_{k_2}v)}_{L_{t,x}^{2}}
\les&\norm{P_{k_1}\p_\theta u}_{L_{t}^{4}L_x^{\frac{4}{1-\e}}L_\theta^3}\norm{P_{k_2}v}_{L_{t}^4 L_x^{\frac{4}{1+\e}}L_\theta^\infty}\\
\les&2^{\e (k_1-k2)/2}\norm{P_{k_1}u}_{\widetilde Y_{k_1}}\norm{P_{k_2}v}_{\widetilde Y_{k_2}}.
\end{align*}
If $k_2\leq \min(k_1,k_3)$, the proof is identical to the above case. 
\end{proof}

\begin{lem}[Algebra properties]
If $s\geq 1$, then we have
\begin{align*}
\norm{uv}_{Y^{s}}\les&
\norm{u}_{Y^{s}}\norm{v}_{Y^{1}}+\norm{u}_{Y^{1}}\norm{v}_{Y^{s}}.
\end{align*}
\end{lem}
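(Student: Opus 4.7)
The plan is to prove the algebra property by a standard paraproduct decomposition, adapted to the multi-component structure of the $Y_k$ norm. Writing
\[ P_k(uv) = \sum_{k_1, k_2} P_k(P_{k_1} u \cdot P_{k_2} v), \]
I split into the high-low regime $k_2 \leq k_1 - 5$ with $k \sim k_1$, the symmetric low-high counterpart, and the high-high regime $k_1 \sim k_2 \geq k - 5$. The angular derivative is handled by Leibniz: $\p_\theta(uv) = (\p_\theta u)v + u(\p_\theta v)$, and since $\p_\theta u \in Y^s$ whenever $u \in Y^s$ by definition of $Y^s$, it suffices to bound $\norm{P_k(uv)}_{Y_k}$ with appropriate summability in $k$.

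For the high-low case, my strategy is to place $P_{k_2} v$ into $L^\infty_{t,x} L^\infty_\theta$ via Bernstein and the Sobolev embedding $H^1_\theta(\T^1) \hookrightarrow L^\infty_\theta$, using
\[ \norm{P_{k_2} v}_{L^\infty_{t,x} L^\infty_\theta} \les 2^{k_2}\bigl(\norm{P_{k_2} v}_{L^\infty_t L^2_x} + \norm{P_{k_2} \p_\theta v}_{L^\infty_t L^2_x}\bigr), \]
so the factor $2^{k_2}$ summed against the $Y^1$-weight produces $\norm{v}_{Y^1}$ after summation over $k_2 \leq k - 5$. The high-frequency factor $P_{k_1} u$ then takes whichever component of the $Y_k$ norm is being estimated, and Hölder closes each of the Strichartz, mixed $L^4_t L^{4/(1+\e)}_x L^3_\theta$, and maximal-function components directly. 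For the $X^{0,1}/\bar X^{0,1}$ component, I use the Leibniz expansion
\[ (i\p_t + \Delta)(P_{k_1}u \cdot P_{k_2}v) = [(i\p_t + \Delta) P_{k_1} u] P_{k_2} v + P_{k_1} u [(i\p_t + \Delta) P_{k_2} v] + 2 \na P_{k_1} u \cdot \na P_{k_2} v, \]
bounding the first two terms in $L^2_{t,x}$ by Hölder after putting the factor carrying $(i\p_t + \Delta)$ into $L^2$ (at the cost of $2^{k_i}$ absorbed by the $2^{-k}$ weight in $Y_k$) and the other into $L^\infty_{t,x}$, and the cross term via Lemma \ref{lem:L2est2}. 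The $\bar X^{0,b}$ degree of freedom lets me choose the correct side of the Schrödinger cone when combining factors.

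For the high-high case, both inputs sit at frequency $\sim 2^{k_1}$ and the output at $2^k \leq 2^{k_1}$. I apply Lemma \ref{lem:L2est2} directly to get $\norm{P_k(P_{k_1} u \cdot P_{k_2} v)}_{L^2_{t,x}} \les 2^{\e(k - k_1)/2} \norm{P_{k_1} u}_{\widetilde Y_{k_1}} \norm{P_{k_2} v}_{\widetilde Y_{k_2}}$, then promote to $L^\infty_t L^2_x$ via Bernstein (paying $2^k$) and to the other components by similar Hölder arguments. The $2^{\e(k - k_1)/2}$ gain from Lemma \ref{lem:L2est2} is precisely the off-diagonal decay that renders the HH sum convergent over $k_1 \geq k$, which is the usual bottleneck for algebra-type estimates at the critical exponent $s = 1$.

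The main obstacle I expect is the $X^{0,1}/\bar X^{0,1}$ component, because the Schrödinger operator does not commute with pointwise multiplication. The Leibniz expansion produces the resonance term $\na u \cdot \na v$, which is morally one full derivative heavier than a naive bilinear estimate can absorb; the $2^{-k}$ weight attached to the $X^{0,1}$ component in the definition of $Y_k$ is exactly the device that compensates this derivative loss and reduces everything to the bilinear $L^2$ bound supplied by Lemma \ref{lem:L2est2}. Once this case is handled, the remaining components are routine Hölder combined with Bernstein and the $H^1_\theta \hookrightarrow L^\infty_\theta$ Sobolev embedding on the circle.
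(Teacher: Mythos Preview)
Your Leibniz strategy for the $X^{0,1}/\bar X^{0,1}$ component has a genuine gap. The identity
\[ (i\p_t + \Delta)(P_{k_1}u \cdot P_{k_2}v) = [(i\p_t + \Delta) P_{k_1} u]\, P_{k_2} v + P_{k_1} u\, [(i\p_t + \Delta) P_{k_2} v] + 2 \na P_{k_1} u \cdot \na P_{k_2} v \]
requires control of $\norm{(i\p_t+\Delta)P_{k_i}w}_{L^2}$, but the $Y_k$ norm only gives $2^{-k}\inf_{f=f_1+f_2}(\norm{f_1}_{X^{0,1}}+\norm{f_2}_{\bar X^{0,1}})$. If $P_{k_i}w$ has a nontrivial $\bar X^{0,1}$ component $w_2$, then $(i\p_t+\Delta)w_2$ is \emph{not} in $L^2$; only $(-i\p_t+\Delta)w_2$ is. Your remark that ``the $\bar X^{0,b}$ degree of freedom lets me choose the correct side'' works when both factors lie on the same parabola, but for the cross terms (one factor in $X$, the other in $\bar X$) neither choice of output side closes: placing the product in $X$ leaves $(i\p_t+\Delta)v_2$ uncontrolled, placing it in $\bar X$ leaves $(-i\p_t+\Delta)u_1$ uncontrolled. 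These cross terms are unavoidable here because the nonlinearity contains $\bar u$ and $|u|^2$. A separate slip: in the high-high case, Bernstein acts only in space and cannot promote an $L^2_{t,x}$ bound to $L^\infty_t L^2_x$.

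The paper does not use Leibniz at all for this component. Instead it decomposes the \emph{output} modulation as $Q_{\leq k_1+k_2+9}+Q_{\geq k_1+k_2+10}$. For low output modulation one trivially has $\norm{\cdot}_{X^{0,1}}\les 2^{k_1+k_2}\norm{\cdot}_{L^2_{t,x}}$ and then invokes Lemma~\ref{lem:L2est2}. For high output modulation, the resonance identity $\tau_3+|\xi_3|^2=(\tau_1+|\xi_1|^2)+(\tau_2+|\xi_2|^2)-H$ with $|H|\les 2^{k_1+k_2}$ forces one input to carry modulation $\ges 2^{k_1+k_2}$; one arranges the splitting so that the \emph{high}-frequency factor determines which of $X$ or $\bar X$ the output sits in (so that the resonance function stays $\les 2^{k_1+k_2}$), while the \emph{low}-frequency factor, for which $2^{k_1+k_2}\ges 2^{2k_{\min}}$ is already far from both parabolas, is controlled by either $X^{0,1}$ or $\bar X^{0,1}$ indifferently. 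This output-modulation decomposition, not the product rule for $i\p_t+\Delta$, is the missing idea.
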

\begin{proof}
By the definition we have $\norm{u}_{L^\infty_{x,t}}\leq
\norm{u}_{L^\infty_{t}\dot B_{2,1}^1}+\norm{\p_\theta
u}_{L^\infty_{t}\dot B_{2,1}^1}\les \norm{u}_{Y^{1}}$. The Lebesgue component can be easily handled by
para-product decomposition and H\"older's inequality. Now we deal with $X^{s,b}$-type space.  It suffices to show
\begin{align}\label{eq:algpf1}
\sum_k \norm{\p_\theta^jP_k(fg)}_{X^{0,1}+\bar X^{0,1}}\les \norm{f}_{Y^1}\norm{g}_{Y^1}, \quad j=0,1.
\end{align}
For simplicity of notations,we write $X=X^{0,1}, \bar X=\bar X^{0,1}$. First we consider $j=0$. The left-hand side of the above inequality is bounded by
\begin{align*}
\sum_{k_3} \norm{P_{k_3}(fg)}_{X+\bar X}\les& \sum_{k_i}\norm{P_k(P_{k_1}fP_{k_2}g)}_{X+\bar X}\\
\leq& (\sum_{k_i:k_1\leq k_2}+\sum_{k_i:k_1> k_2})\norm{P_{k_3}(P_{k_1}fP_{k_2}g)}_{X+\bar X}\\
:=&I+II.
\end{align*}
By symmetry, we only estimate the term $I$. Assume $P_{k_1}f=P_{k_1}f_1+P_{k_1}f_2$, $P_{k_2}g=P_{k_2}g_1+P_{k_2}g_2$ such that
\[\norm{P_{k_1}f_1}_{X}+\norm{P_{k_1}f_2}_{\bar X}\les \norm{P_{k_1}f}_{X+\bar X}, \quad \norm{P_{k_2}g_1}_{X}+\norm{P_{k_2}g_2}_{\bar X}\les \norm{P_{k_2}g}_{X+\bar X}.\]
Then we have
\begin{align*}
I\les& \sum_{k_i:k_1\leq k_2}\sum_{j=1}^2\norm{P_{k_3}(P_{k_1}f_jP_{k_2}g_1)}_{X}+\sum_{k_i:k_1\leq k_2}\sum_{j=1}^2\norm{P_{k_3}(P_{k_1}f_jP_{k_2}g_2)}_{\bar X}\\
:=&I_1+I_2.
\end{align*}

We only estimate the term $I_1$ since the other term $I_2$ can be estimated in a similar way.  First we assume $k_3\leq k_1+5$. We have
\begin{align*}
I_1\les& \sum_{k_i:k_1\leq k_2}\sum_{j=1}^2(\norm{P_{k_3}Q_{\leq k_1+k_2+9}(P_{k_1}f_jP_{k_2}g_1)}_{X}+\norm{P_{k_3}Q_{\geq k_1+k_2+10}(P_{k_1}f_jP_{k_2}g_1)}_{X})\\
:=&I_{11}+I_{12}.
\end{align*}
For the term $I_{11}$, by Lemma \ref{lem:L2est2} we get
\begin{align*}
I_{11}\les&\sum_{k_i:k_1\leq k_2}\sum_{j=1}^22^{k_1+k_2}\norm{P_{k_3}(P_{k_1}f_jP_{k_2}g_1)}_{L^2_{t,x}}\\
\les& \sum_{k_i:k_1\leq k_2}\sum_{j=1}^22^{k_1+k_2}2^{\e(k_3-k_1)}\norm{P_{k_1}f_j}_{\wt Y_{k_1}}\norm{P_{k_2}g_1}_{\wt Y_{k_2}}\\
\les&  \norm{f}_{Y^1}\norm{g}_{Y^1}.
\end{align*}
For the term $I_{12}$, we need to exploit the nonlinear interactions.  We have
\begin{align*}
&\ft P_{k_3}Q_{\geq k_1+k_2+10}(P_{k_1}f_jP_{k_2}g_1)\\
&=\chi_{k_3}(\xi_3)\chi_{\geq k_1+k_2+10}(\tau_3+|\xi_3|^2)
\int_{\xi_3=\xi_1+\xi_2,\tau_3=\tau_1+\tau_2}\chi_{k_1}(\xi_1)\widehat{f_j}(\tau_1,\xi_1)\chi_{k_2}(\xi_2)\widehat{g_1}(\tau_2,\xi_2).
\end{align*}
We assume $j=1$ since $j=2$ is similar.  On the plane $\{\xi_3=\xi_1+\xi_2,\tau_3=\tau_1+\tau_2\}$ we have
\begin{align}
\tau_3+|\xi_3|^2=\tau_1+|\xi_1|^2+\tau_2+|\xi_2|^2-H(\xi_1,\xi_2)
\end{align}
where $H$ is the resonance function in the product
$P_{k_3}(P_{k_1}f_jP_{k_2}g_1)$
\begin{align}
H(\xi_1,\xi_2)=|\xi_1|^2+|\xi_2|^2-|\xi_1+\xi_2|^2.
\end{align}
Since $|H|\les 2^{k_1+k_2}$, then
one of $P_{k_1}f_j$, $P_{k_2}g_1$ has modulation larger than the
output modulation, namely
\[\max(|\tau_1+|\xi_1|^2|,|\tau_2+|\xi_2|^2|)\ges |\tau_3+|\xi_3|^2|.\]
If $P_{k_1}f_j$ has larger modulation, then
\begin{align*}
I_{12}\les& \sum_{k_i:k_1\leq k_2}\normo{2^{j_3}\norm{P_{k_3}Q_{j_3}(P_{k_1}f_jP_{k_2}g_1)}_{L^2_{t,x}}}_{l^2_{j_3\geq k_1+k_2}}\\
\les&
\sum_{k_i:k_1\leq k_2}\sum_{j=1}^22^{k_3}(\sum_{j_3\geq k_1+k_2}2^{2j_3}\norm{Q_{\geq j_3}P_{k_1}f_j}^2_{L_{t,x}^2})^{1/2}\norm{P_{k_2}g_1}_{L^\infty_tL_x^2}\\
\les&\sum_{k_i:k_1\leq
k_2}\sum_{j=1}^22^{k_3}2^{k_1}\norm{P_{k_1}f_j}_{X+\bar X}\norm{P_{k_2}g_1}_{Y_{k_2}}\les
\norm{f}_{Y^{1}}\norm{g}_{Y^{1}}.
\end{align*}
If $P_{k_1}g_1$ has larger modulation, then
\begin{align*}
I_{12}\les& \sum_{k_i:k_1\leq k_2}\sum_{j=1}^22^{k_3}\norm{P_{k_1}f_j}_{L_{t}^\infty L_x^2}(\sum_{j_3\geq k_1+k_2}2^{2j_3}\norm{P_{k_2}Q_{\geq j_3}g_1}^2_{L^2_tL_x^2})^{1/2}\\
\les&\sum_{k_i:k_1\leq
k_2}\sum_{j=1}^22^{k_3}2^{k_2}\norm{P_{k_1}f_j}_{Y_{k_1}}\norm{P_{k_2}g_1}_{X}\les
\norm{f}_{Y^{1}}\norm{g}_{Y^{1}}.
\end{align*}

Now we assume $k_3\geq k_1+6$. In this case we have $|k_2-k_3|\leq 4$.
\begin{align*}
I_1\les& \sum_{k_i:k_1\leq k_2}\sum_{j=1}^2(\norm{P_{k_3}Q_{\leq k_1+k_2+9}(P_{k_1}f_jP_{k_2}g_1)}_{X}+\norm{P_{k_3}Q_{\geq k_1+k_2+10}(P_{k_1}f_jP_{k_2}g_1)}_{X})\\
:=&\tilde I_{11}+\tilde I_{12}.
\end{align*}
By Lemma \ref{lem:L2est2} we get
\begin{align*}
\tilde I_{11}\les& \sum_{k_i:k_1\leq k_2}\sum_{j=1}^2(\norm{P_{k_3}Q_{\leq k_1+k_2+9}(P_{k_1}f_jP_{k_2}g_1)}_{X}\\
\les&\sum_{k_i:k_1\leq k_2}2^{k_1+k_2}\norm{P_{k_1}f_j}_{L_{t,x}^4}\norm{P_{k_2}g_1}_{L_{t,x}^4}\les \norm{f}_{Y^1}\norm{g}_{Y^1}.
\end{align*}
For the term $\tilde I_{12}$, similarly as the term $I_{12}$, we may assume $P_{k_1}f_j$ (or $P_{k_2}g_1$) has modulation $\ges 2^{k_1+k_2}$.
If $P_{k_1}f_j$ has larger modulation, then
\begin{align*}
I_{12}\les& \sum_{k_i:k_1\leq k_2}\normo{2^{j_3}\norm{P_{k_3}Q_{j_3}(P_{k_1}f_jP_{k_2}g_1)}_{L^2_{t,x}}}_{l^2_{j_3\geq k_1+k_2}}\\
\les&
\sum_{k_i:k_1\leq k_2}\sum_{j=1}^22^{k_1}(\sum_{j_3\geq k_1+k_2}2^{2j_3}\norm{Q_{\geq j_3}P_{k_1}f_j}^2_{L_{t,x}^2})^{1/2}\norm{P_{k_2}g_1}_{L^\infty_tL_x^2}\\
\les&\sum_{k_i:k_1\leq
k_2}\sum_{j=1}^22^{k_1}2^{k_1}\norm{P_{k_1}f_j}_{X+\bar X}\norm{P_{k_2}g_1}_{Y_{k_2}}\les
\norm{f}_{Y^{1}}\norm{g}_{Y^{1}}.
\end{align*}
If $P_{k_1}g_1$ has larger modulation, then
\begin{align*}
I_{12}\les& \sum_{k_i:k_1\leq k_2}\sum_{j=1}^22^{k_1}\norm{P_{k_1}f_j}_{L_{t}^\infty L_x^2}(\sum_{j_3\geq k_1+k_2}2^{2j_3}\norm{P_{k_2}Q_{\geq j_3}g_1}^2_{L^2_tL_x^2})^{1/2}\\
\les&\sum_{k_i:k_1\leq
k_2}\sum_{j=1}^22^{k_1}2^{k_2}\norm{P_{k_1}f_j}_{Y_{k_1}}\norm{P_{k_2}g_1}_{X}\les
\norm{f}_{Y^{1}}\norm{g}_{Y^{1}}.
\end{align*}
For $j=1$ in \eqref{eq:algpf1}, we see the above argument can be easily modified. Thus, we complete the proof.
\end{proof}

\begin{lem} \label{lem:nonL2est}
We have
\begin{align}
&\sum_{k_1,k_2,k_3}(\norm{P_{k_3}[u\sum_{i=1}^2 (\p_{x_i}P_{k_1}v
\p_{x_i}P_{k_2}w)]}_{L_{t,x}^2}+\norm{\p_\theta P_{k_3}[u\sum_{i=1}^2 (\p_{x_i}P_{k_1}v
\p_{x_i}P_{k_2}w)]}_{L_{t,x}^2})\nonumber\\
&\les
\norm{u}_{Y^1}\norm{v}_{Z^1}\norm{w}_{Z^1}.\label{eq:nonL2est}
\end{align}
\end{lem}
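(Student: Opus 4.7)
The plan is to follow the same paraproduct strategy as the preceding algebra lemma, but exploit the null structure of $\sum_i\partial_{x_i}v\cdot\partial_{x_i}w$ (as in Bejenaru \cite{Bej}) to survive the 2D endpoint. I would first decompose $u=\sum_{k_0}P_{k_0}u$ so that the left-hand side of \eqref{eq:nonL2est} is bounded by
\[\sum_{k_0,k_1,k_2,k_3}\norm{P_{k_3}[P_{k_0}u\cdot\nabla P_{k_1}v\cdot\nabla P_{k_2}w]}_{L^2_{t,x}}\]
together with the corresponding $\partial_\theta$ version. Using the identity $[\partial_\theta,\partial_{x_i}]=\partial_{x_i}$, the $\partial_\theta$ version reduces to finitely many terms of the first type with $\partial_\theta$ landing on $u$, $v$, or $w$ (absorbed into the $\wt Y_{k_0}$, $\wt Z_{k_1}$, $\wt Z_{k_2}$ norms). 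By the $v\leftrightarrow w$ symmetry, assume $k_1\leq k_2$; Fourier support then forces $k_3\leq\max(k_0,k_2)+5$.

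In the \emph{non-resonant regime} (either $k_0>k_2+5$, or $k_0\leq k_2+5$ with $k_1\ll k_2$ or $k_3\sim k_2$), I would simply place $P_{k_0}u$ in $L^\infty_{t,x}$ using the Bernstein-plus-Besov bound $\sum_{k_0}\norm{P_{k_0}u}_{L^\infty_{t,x}}\leq\sum_{k_0}2^{k_0}\norm{P_{k_0}u}_{\wt Y_{k_0}}=\norm{u}_{Y^1}$ (which uses the 2D critical embedding $\dot B^1_{2,1}\hookrightarrow L^\infty$), and apply Lemma \ref{lem:L2est2} (extended by pulling out the $\nabla$ symbols and using $\norm{\nabla P_{k_i}v}_{\wt Y_{k_i}}\les 2^{k_i}\norm{P_{k_i}v}_{\wt Z_{k_i}}$) to bound
\[\norm{\nabla P_{k_1}v\cdot\nabla P_{k_2}w}_{L^2_{t,x}}\les 2^{k_1+k_2}2^{\e(k_1-k_2)/2}\norm{P_{k_1}v}_{\wt Z_{k_1}}\norm{P_{k_2}w}_{\wt Z_{k_2}}.\]
The gradient factor $2^{k_1+k_2}$ cancels against $\norm{P_{k_i}\cdot}_{\wt Z_{k_i}}\les 2^{-k_i}\norm{\cdot}_{Z^1}$, the decay $2^{\e(k_1-k_2)/2}$ sums over $k_1\leq k_2$, and Fourier support restricts the $k_3$-sum to a bounded range.

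The main obstacle is the \emph{resonant high-high-to-low} interaction $k_1\sim k_2\gg k_3$ with $k_0\leq k_2+5$, where the above H\"older analysis yields an $O(1)$ bound uniform in $k_3$ and produces a divergent logarithm upon summing $k_3\leq k_2$. Here I would invoke the null-form identity
\[2\sum_i\partial_{x_i}P_{k_1}v\,\partial_{x_i}P_{k_2}w=\Delta(P_{k_1}v\,P_{k_2}w)-P_{k_1}v\,\Delta P_{k_2}w-\Delta P_{k_1}v\,P_{k_2}w,\]
and substitute $\Delta=(i\partial_t+\Delta)-i\partial_t$ on each factor. The $(i\partial_t+\Delta)P_{k_i}$ pieces are bounded via the $X^{0,1}$ component of $\wt Y_{k_i}$ and $\wt Z_{k_i}$, while the $i\partial_t$ contributions are absorbed using the resonance relation $\tau_3+|\xi_3|^2=(\tau_1+|\xi_1|^2)+(\tau_2+|\xi_2|^2)-2\xi_1\cdot\xi_2$, which in this regime forces the maximum of the three modulations to be $\gtrsim 2^{2k_2}$; the $2^{2k_2}$ loss is then compensated by the $X^{0,1/2,\infty}$ norm (present in $Z_{k_3}$) when the output has large modulation, or by the $X^{0,1}$ norm at an input otherwise. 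The spherically averaged Strichartz norm $2^{k\e/2}\norm{\cdot}_{L^4_tL^{4/(1+\e)}_xL^3_\theta}$ from \cite{Guo2} built into $Z_k$ kills the residual 2D logarithmic loss in this borderline bilinear estimate, exactly as foreseen in the remark following Theorem \ref{thm:Schmap}.
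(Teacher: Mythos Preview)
Your non-resonant analysis is essentially right and matches the paper's (which organizes the split as $P_{\geq k_3-10}u$ versus $P_{\leq k_3-10}u$). But for the resonant high-high-to-low regime $k_1\sim k_2\gg k_3$ with $u$ at low frequency, you overcomplicate things by reaching for the null form. The paper does \emph{not} use the null form in this lemma at all; it simply observes that when $u$ has frequency $\leq 2^{k_3-10}$, the projector $P_{k_3}$ can be transferred onto the bilinear factor,
\[
P_{k_3}\bigl[P_{\leq k_3-10}u\cdot\nabla P_{k_1}v\cdot\nabla P_{k_2}w\bigr]
=P_{k_3}\bigl[P_{\leq k_3-10}u\cdot\tilde P_{k_3}(\nabla P_{k_1}v\cdot\nabla P_{k_2}w)\bigr],
\]
after which one pulls $u$ out in $L^\infty_{t,x}$ and applies the method of Lemma~\ref{lem:L2est2} directly: Bernstein on $\tilde P_{k_3}$ into $L^2_tL_x^{2/(1+\e)}$, then H\"older with the spherically averaged Strichartz norm $L^4_tL_x^{4/(1+\e)}L^3_\theta$ built into $Z_k$. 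This produces the gain $2^{\e(k_3-k_2)}$, which sums over $k_3\leq k_2$ immediately. The null-form identity is reserved for the \emph{next} lemma, where one must estimate the finer components of $W_k$ (the $L_{\ve e}^{1,2}$ and $X^{0,-1/2,1}$ pieces), not the plain $L^2$ component handled here.

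Your null-form detour also does not obviously close on its own. For free inputs $v,w$ the terms $(Lv)w$ and $v(Lw)$ vanish and one is left with $L(P_{k_1}v\,P_{k_2}w)$; at output frequency $\sim 2^{k_3}$ with $\xi_1\approx-\xi_2$ the modulation is $|2\xi_1\cdot\xi_2|\sim 2^{2k_2}$, so the $L$ recovers exactly the factor $2^{k_1+k_2}$ you started with and you are back to needing the $\e$-gain from Lemma~\ref{lem:L2est2} anyway --- as your final sentence implicitly concedes. The simpler route is to recognize that Lemma~\ref{lem:L2est2} already covers the resonant case once the output projection is retained on the bilinear factor.
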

\begin{proof}
We only estimates the first term on the left-hand side, since the other term is similar.
We have
\begin{align*}
\mbox{First term on LHS of \eqref{eq:nonL2est}}\les&
\sum_{k_1,k_2,k_3}\norm{P_{k_3}[P_{\geq k_3-10}u\sum_{i=1}^n
(\p_{x_i}P_{k_1}v \p_{x_i}P_{k_2}w)]}_{L_{t,x}^2}\\
&+ \sum_{k_1,k_2,k_3}\norm{P_{k_3}[P_{\leq k_3-10}u\sum_{i=1}^n
(\p_{x_i}P_{k_1}v \p_{x_i}P_{k_2}w)]}_{L_{t,x}^2}\\
:=&I+II.
\end{align*}
For the term $I$, by Lemma \ref{lem:L2est} we get
\begin{align*}
I\les& \sum_{k_1,k_2,k_3}2^{k_3}\norm{P_{\geq k_3-10}u}_{L_t^\infty
L_x^2}\norm{\sum_{i=1}^n (\p_{x_i}P_{k_1}v
\p_{x_i}P_{k_2}w)]}_{L_{t,x}^2}\\
\les&\sum_{k_1,k_2,k_3}2^{k_3}2^{k_1+k_2}\norm{P_{\geq
k_3-10}u}_{L_t^\infty L_x^2}\norm{P_{k_1}v}_{F_{k_1}}\norm{P_{k_2}w}_{F_{k_2}}\\
\les& \norm{u}_{Y^1}\norm{v}_{Z^1}\norm{w}_{Z^1}.
\end{align*}
For the term $II$ we have
\begin{align*}
II\les& \norm{u}_{Y^1} \sum_{k_1,k_2,k_3}\norm{\tilde
P_{k_3}\sum_{i=1}^n (\p_{x_i}P_{k_1}v
\p_{x_i}P_{k_2}w)]}_{L_{t,x}^2}
\end{align*}
We may assume $k_1\leq k_2$ in the above summation. If $k_3\geq
k_2-9$, then $|k_3-k_2|\leq 5$ and thus
\begin{align*}
II\les& \norm{u}_{Y^1} \sum_{k_1\leq k_2,|k_2-k_3|\leq
5}2^{k_1+k_2}2^{(k_1-k_2)/2}\norm{P_{k_1}v}_{F_{k_1}}\norm{P_{k_2}w}_{F_{k_2}}\\
\les& \norm{u}_{Y^1}\norm{v}_{Z^1}\norm{w}_{Z^1}.
\end{align*}
If $k_3\leq k_2-10$, then $|k_1-k_2|\leq 5$.  Thus we get
\begin{align*}
II\les& \norm{u}_{Y^1} \sum_{k_1,k_2,k_3}2^{k_3\e}\norm{\tilde
P_{k_3}\sum_{i=1}^n (\p_{x_i}P_{k_1}v
\p_{x_i}P_{k_2}w)]}_{L_{t}^2L_x^{\frac{2}{1+\e}}}\\
\les& \norm{u}_{Y^1} \sum_{k_1,k_2,k_3}2^{k_3\e}2^{k_1+k_2}\norm{P_{k_1}v
\p_{x_i}}_{L_{t}^2L_x^{\frac{4}{1+\e}}L_\theta^3}\norm{P_{k_2}w}_{L_{t}^2L_x^{\frac{4}{1+\e}}L_\theta^\infty}\\
\les& \norm{u}_{Y^1}\norm{v}_{Z^1}\norm{w}_{Z^1}.
\end{align*}
Therefore we complete the proof.
\end{proof}

\begin{lem}
We have
\begin{align}
\norm{u\sum_{i=1}^2(\p_{x_i}v \partial_{x_i}w)}_{W^{1}}\les&
\norm{u}_{Y^{1}}\norm{v}_{Z^{1}}\norm{w}_{Z^{1}}.
\end{align}
\end{lem}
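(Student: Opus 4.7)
The plan is to split the $W^1$-norm into its two natural components---the pure $L^2$-piece with weight $2^{-k_3}$ and the infimum-of-decompositions piece---and attack each using the dyadic Littlewood--Paley expansion $u\sum_i\p_i v\,\p_i w=\sum_{k_0,k_1,k_2}P_{k_0}u\sum_i\p_i P_{k_1}v\,\p_i P_{k_2}w$, with output frequency localized by $P_{k_3}$. By symmetry I assume $k_1\leq k_2$.

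For the $L^2$-piece, the weights $2^{k_3}$ (from the $W^1$-summation) and $2^{-k_3}$ (from the $W_{k_3}$-definition) cancel, so the contribution collapses to $\sum_{k_3}\|P_{k_3}[u\sum_i\p_i v\,\p_i w]\|_{L^2_{t,x}}$, which is exactly what Lemma~\ref{lem:nonL2est} bounds. For the $\p_\theta$-variant I apply Leibniz together with the commutators $[\p_\theta,\p_{x_1}]=-\p_{x_2}$ and $[\p_\theta,\p_{x_2}]=\p_{x_1}$ to rewrite $\p_\theta[u\sum_i\p_i v\,\p_i w]$ as a finite sum of trilinear expressions of the same form with $\p_\theta$ landing on one of $u, v, w$ (and some zero-th order commutator terms of the same shape); Lemma~\ref{lem:nonL2est} then handles each, since $Y^1$ and $Z^1$ both control the $\p_\theta$-derivatives of their arguments.

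The infimum-piece requires a genuine case analysis, placing each dyadic quadruple $(k_0,k_1,k_2,k_3)$ into exactly one of the four slots of $W_{k_3}$. In the non-resonant regime $k_3\sim\max(k_0,k_1,k_2)$ I use the Strichartz slot $L^{4/3}_{t,x}$ via H\"older $\|P_{k_0}u\|_{L^4}\cdot 2^{k_1}\|P_{k_1}v\|_{L^4}\cdot 2^{k_2}\|P_{k_2}w\|_{L^4}$ and sum with the $L^4_{t,x}$ controls built into $Y_k$ and $Z_k$. In the resonant high-high-to-low regime $k_3\ll k_2\sim k_1$, I exploit the null identity
\[2\sum_i\xi_{1,i}\xi_{2,i}=|\xi_1+\xi_2|^2-|\xi_1|^2-|\xi_2|^2,\]
which on the Fourier support of the product equals, up to low-frequency corrections coming from $u$, the resonance $\tau_3+|\xi_3|^2-(\tau_1+|\xi_1|^2)-(\tau_2+|\xi_2|^2)$; hence the null symbol provides a factor $2^{2k_2}$ that is traded against modulation. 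Either the output modulation is $\gtrsim 2^{2k_2}$, in which case the piece is placed into $X^{0,-1/2,1}$, or one of $P_{k_1}v, P_{k_2}w$ carries that modulation and is absorbed via the $2^{-k}\|\cdot\|_{X^{0,1}}$-component of $Z_k$. The delicate $k_3$-summation is made finite by the angular-refined Strichartz $L^4_tL^{4/(1+\e)}_xL^3_\theta$ inside $Z_k$ (as exploited in Lemma~\ref{lem:L2est2}), which produces a $2^{-\e|k_2-k_3|/2}$ gain that overcomes the usual 2D logarithmic loss. The remaining regime $k_0\sim k_3$ with $k_1,k_2\ll k_3$ is handled by the $L^1_tL^2_x$ slot: $u$ is placed in $L^\infty_{t,x}$ (controlled by $Y^1$ via Sobolev--Besov embedding) while the bilinear $\sum_i\p_i v\,\p_i w$ is estimated in $L^1_tL^2_x$ using H\"older together with the maximal-function and local-smoothing components of $Z^1$.

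The main obstacle is precisely the resonant regime. In 2D, neither the null cancellation alone nor the angular regularity alone suffices to close the $k_3$-summation---one must combine both, using the null structure to extract the modulation gain and the $L^3_\theta$-refinement to achieve summability. Outside this regime the estimates reduce to routine H\"older, Bernstein, and Strichartz bookkeeping, occasionally aided by Lemma~\ref{lem:L2est2}.
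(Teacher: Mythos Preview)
Your case analysis for the infimum piece has a genuine gap. The problematic regime is $k_0\ll k_3$ with $k_3\sim k_2$ (your notation: $u$ at low frequency, output matching the high-frequency derivative factor $w$). In this regime your $L^{4/3}_{t,x}$ argument yields
\[
2^{k_3}\,\|P_{k_0}u\|_{L^4}\cdot 2^{k_1}\|P_{k_1}v\|_{L^4}\cdot 2^{k_2}\|P_{k_2}w\|_{L^4},
\]
and since $Y^1$ only controls $\sum_{k_0}2^{k_0}\|P_{k_0}u\|_{L^4}$, the weight mismatch $2^{k_3-k_0}\to\infty$ makes the $k_0$-sum diverge. This configuration is not covered by your Case~2 (which requires $k_3\ll k_1\sim k_2$) nor by your Case~3 (which requires $k_0\sim k_3$; incidentally that case is already handled by the $L^{4/3}$ argument and does not need the $L^1_tL^2_x$ slot, so singling it out is misdirected).

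This missing regime is exactly the heart of the paper's argument (there it is the range $k_4\geq k_1+40$, in the paper's indexing where $k_1$ is the frequency of $u$ and $k_4$ the output). When the low-frequency factors $u$ and $v$ are comparable (the paper's Case~1) the local-smoothing slot $L^{1,2}_{\ve e}$ is used, pairing $u$ with $\partial w$ via the bilinear maximal/smoothing Lemma~\ref{lem:L2est} to recover the factor $2^{(k_0-k_2)/2}$ needed for summation. When instead $u$ is much lower than both $v$ and $w$ (the paper's Case~2), the null identity $2\nabla v\cdot\nabla w=(i\partial_t-\Delta)v\cdot w+v\cdot(i\partial_t-\Delta)w-(i\partial_t-\Delta)(vw)$ is applied after a modulation decomposition of the bilinear at threshold $2^{k_1+k_2}$, with the resulting pieces placed mostly in $L^{1,2}_{\ve e}$ and one in $X^{0,-1/2,1}$. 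This is needed regardless of whether the output is low or $k_3\sim k_2$, so your restriction of the null argument to the high-high-to-low output case is too narrow. The local-smoothing slot $L^{1,2}_{\ve e}$---which you barely invoke---is in fact the workhorse throughout the infimum piece.
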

\begin{proof}
By the definition, we have
\begin{align}\label{eq:tripf1}
\norm{u \sum_{i=1}^2\p_{x_i}v
\partial_{x_i}w}_{W^{1}}\leq&\sum_{k_i}2^{k_4}\sum_{j=0,1}\norm{\p_\theta^jP_{k_4}[P_{k_1}u\sum_{i=1}^2(P_{k_2}\p_{x_i}v
\partial_{x_i}P_{k_3}w)]}_{W_{k_4}}.
\end{align}
The $L_{t,x}^2$ component in $W_{k_4}$ is handled by the previous lemma.  So we only need  to handle the other component.  We assume $j=0$ in the above inequality since the other case $j=1$ is similar. 
By symmetry we may assume $k_2\leq k_3$ in the above summation. If
in the above summation we assume $k_4\leq k_1+40$, then
\begin{align*}
\eqref{eq:tripf1}\les&
\sum_{k_i}2^{k_4}\norm{P_{k_4}[P_{k_1}uP_{k_2}\p_{x_i}\bar v
\partial_{x_i}P_{k_3}w]}_{L_{t,x}^{4/3}}\\
\les& \sum_{k_i}2^{k_1}\norm{P_{k_1}u}_{L_{x,t}^{4}}2^{k_2}\norm{P_{k_2}v}_{L_{x,t}^{4}}2^{k_3}\norm{P_{k_3}w}_{L_{x,t}^{4}}\\
\les&\norm{u}_{Y^{1}}\norm{v}_{Z^{1}}\norm{w}_{Z^{1}}.
\end{align*}
Thus we assume $k_4\geq k_1+40$ in the summation of
\eqref{eq:tripf1}. We bound the summation case by case.

{\bf Case 1:} $k_2\leq k_1+20$

In this case we have $k_4\geq k_2+20$ and hence $|k_4-k_3|\leq 5$.
By Lemma \ref{lem:L2est} we get
\begin{align*}
\eqref{eq:tripf1}\les&
\sum_{k_i}2^{k_4}2^{-k_4/2}\norm{P_{k_4}[P_{k_1}uP_{k_2}\p_{x_i}\bar
v
\partial_{x_i}P_{k_3}w]}_{L_{\ve{e}}^{1,2}}\\
\les& \sum_{k_i}2^{k_4}2^{-k_4/2}\norm{P_{k_1}uP_{k_3}\partial_{x_i}w}_{L_{x,t}^{2}}\norm{P_{k_2}\p_{x_i}v}_{L_{\ve{e}}^{2,\infty}}\\
\les&\norm{u}_{Y^{1}}\norm{v}_{Z^{1}}\norm{w}_{Z^{1}}.
\end{align*}

{\bf Case 2:} $k_2\geq k_1+21$

In this case we have $k_4\leq k_3+40$. Let
$g=\sum_{i=1}^n(P_{k_2}\p_{x_i}v\cdot P_{k_3}\partial_{x_i}w)$. Then
we have
\begin{align*}
\eqref{eq:tripf1}\les&\sum_{k_i}2^{k_4}\norm{P_{k_4}[P_{k_1}uQ_{\leq k_2+k_3}g]}_{N_{k_4}}+\sum_{k_i}2^{k_4}\norm{P_{k_4}[P_{k_1}uQ_{\geq k_2+k_3}g]}_{N_{k_4}}\\
:=&I+II.
\end{align*}
First we estimate the term $II$.  We have
\begin{align*}
II\les &\sum_{k_i}2^{k_4}\norm{P_{k_4}[P_{k_1}Q_{\geq k_2+k_3-10}u\cdot Q_{\geq k_2+k_3}g]}_{N_{k_4}}\\
&+\sum_{k_i}2^{k_4}\norm{P_{k_4}[P_{k_1}Q_{\leq k_2+k_3-10}u\cdot Q_{\geq k_2+k_3}g]}_{N_{k_4}}\\
:=&II_1+II_2.
\end{align*}
For the term $II_1$ we have
\begin{align*}
II_1\les &\sum_{k_i}2^{k_4}\norm{P_{k_4}[P_{k_1}Q_{\geq k_2+k_3-10}u\cdot Q_{\geq k_2+k_3}g]}_{L_t^1L_x^2}\\
\les &\sum_{k_i}2^{k_4}\norm{P_{k_1}Q_{\geq k_2+k_3-10}u}_{L_t^2L_x^\infty}\norm{Q_{\geq k_2+k_3}g]}_{L_t^2L_x^2}\\
\les &\sum_{k_i}2^{k_4}2^{k_1}\norm{P_{k_1}Q_{\geq k_2+k_3-10}u}_{L_t^2L_x^2}\norm{Q_{\geq k_2+k_3}g]}_{L_t^2L_x^2}\\
\les &\sum_{k_i}2^{k_4}2^{k_1}2^{-(k_2+k_3)}\norm{P_{k_1}u}_{X^{0,1}+\bar X^{0,1}}2^{(k_2-k_3)/2}2^{k_2+k_3}\norm{P_{k_2}v}_{F_{k_2}}\norm{P_{k_3}w}_{F_{k_3}}\\
\les&\norm{u}_{Y^{1}}\norm{v}_{Z^{1}}\norm{w}_{Z^{1}}.
\end{align*}
For the term $II_2$, since $k_4\geq k_1+40$, then we may assume $g$
has frequency of size $2^{k_4}$.  The resonance function in the
product $P_{k_1}u\cdot p_{k_4}g$ is of size $\les 2^{k_1+k_4}$.
Thus the output modulation is of size $\ges 2^{k_2+k_3}$.  Then we
get
\begin{align*}
II_2\les &\sum_{k_i}2^{k_4}2^{-(k_2+k_3)/2}\norm{P_{k_4}[P_{k_1}Q_{\leq k_2+k_3-10}u\cdot Q_{\geq k_2+k_3}g]}_{L^2_{t,x}}\\
\les &\sum_{k_i}2^{k_4}2^{-(k_2+k_3)/2}2^{k_1}\norm{P_{k_1}u}_{L_t^\infty L_x^2}\cdot \norm{g}_{L^2_{t,x}}\\
\les&\sum_{k_i}2^{k_4}2^{-(k_2+k_3)/2}2^{k_1}2^{(k_2-k_3)/2}2^{k_2+k_3}\norm{P_{k_1}u}_{Y_{k_1}}\norm{P_{k_2}v}_{F_{k_2}}\norm{P_{k_3}w}_{F_{k_3}}\\
\les&\norm{u}_{Y^{1}}\norm{v}_{Z^{1}}\norm{w}_{Z^{1}}.
\end{align*}

Now we estimate the term $I$.  We have
\begin{align*}
I\les &\sum_{k_i}2^{k_4}\norm{P_{k_4}[P_{k_1}u\cdot Q_{\leq
k_2+k_3}\sum_{i=1}^2(P_{k_2}\p_{x_i}Q_{\geq k_2+k_3+40}v\cdot
P_{k_3}\partial_{x_i}w)]}_{N_{k_4}}\\
&+\sum_{k_i}2^{k_4}\norm{P_{k_4}[P_{k_1}u\cdot Q_{\leq
k_2+k_3}\sum_{i=1}^2(P_{k_2}\p_{x_i}Q_{\leq k_2+k_3+39}v\cdot
P_{k_3}\partial_{x_i}w)]}_{N_{k_4}}\\
:=&I_1+I_2.
\end{align*}
For the term $I_1$, since the resonance function in the product
$P_{k_2}v\cdot p_{k_3}w$ is of size $\les 2^{k_2+k_3}$, then we may
assume $P_{k_3}w$ has modulation of size $\ges 2^{k_2+k_3}$.  Then
we get
\begin{align*}
I_1\les& \sum_{k_i}2^{k_4}2^{-k_4/2}\norm{P_{k_4}[P_{k_1}u\cdot
Q_{\leq k_2+k_3}\sum_{i=1}^2(P_{k_2}\p_{x_i}Q_{\geq
k_2+k_3+40}v\cdot
P_{k_3}\partial_{x_i}Q_{\geq k_2+k_3-5}w)]}_{L^{1,2}_{\ve e}}\\
\les&\sum_{k_i}2^{k_4}2^{-k_4/2}\norm{P_{k_1}u}_{L^\infty_{t,x}}2^{k_2+k_3}\norm{P_{k_2}v}_{L^{2,\infty}_{\ve e}}\norm{P_{k_3}Q_{\geq k_2+k_3-5}w}_{L^2_{t,x}}\\
\les&\sum_{k_i}2^{k_4}2^{-k_4/2}2^{(k_2+k_3)/2}2^{k_2/2}2^{k_1}\norm{P_{k_1}u}_{Y_{k_1}}\norm{P_{k_2}v}_{F_{k_2}}\norm{P_{k_3}w}_{F_{k_3}}\\
\les&\norm{u}_{Y^{1}}\norm{v}_{Z^{1}}\norm{w}_{Z^{1}}.
\end{align*}

Finally, we estimate the term $I_2$.  For this term, we need to use
the null structure observed by Bejenaru \cite{Bej}.  We can rewrite
\begin{align}
2\nabla u\cdot \nabla v=(i\p_t-\Delta)u\cdot v+u\cdot
(i\p_t-\Delta)v-(i\p_t-\Delta)(u\cdot v).
\end{align}
Let $L=i\p_t-\Delta$. Then we have
\begin{align*}
I_2=&\sum_{k_i}2^{k_4}\norm{P_{k_4}[P_{k_1}u\cdot Q_{\leq
k_2+k_3}(P_{k_2}LQ_{\leq k_2+k_3+39}v\cdot
P_{k_3}w)]}_{N_{k_4}}\\
&+\sum_{k_i}2^{k_4}\norm{P_{k_4}[P_{k_1}u\cdot Q_{\leq
k_2+k_3}(P_{k_2}Q_{\leq k_2+k_3+39}v\cdot
P_{k_3}Lw)]}_{N_{k_4}}\\
&+\sum_{k_i}2^{k_4}\norm{P_{k_4}[P_{k_1}u\cdot Q_{\leq
k_2+k_3}L(P_{k_2}Q_{\leq k_2+k_3+39}v\cdot
P_{k_3}w)]}_{N_{k_4}}\\
:=&I_{21}+I_{22}+I_{23}.
\end{align*}
For the term $I_{21}$, if $k_4\geq k_2+10$, then $|k_4-k_3|\leq 5$ and hence
\begin{align*}
I_{21}\les& \sum_{k_i}2^{k_4}2^{-k_4/2}\norm{P_{k_4}[P_{k_1}u\cdot Q_{\leq
k_2+k_3}(P_{k_2}LQ_{\leq k_2+k_3+39}v\cdot P_{k_3}w)]}_{L^{1,2}_{\ve e}}\\
\les&\sum_{k_i}2^{k_4}2^{k_1}2^{-k_4/2}\norm{P_{k_1}u}_{L_t^\infty
L_x^2}\norm{P_{k_2}Lv}_{L^2_{t,x}}\norm{P_{k_3}w}_{L^{2,\infty}_{\ve e}}\\
\les&\norm{u}_{Y^{1}}\norm{v}_{Z^{1}}\norm{w}_{Z^{1}}.
\end{align*}
On the other hand, if $k_4\leq k_2+10$, we get
\begin{align*}
I_{21}\les& \sum_{k_i}2^{k_4}\norm{P_{k_4}[P_{k_1}u\cdot Q_{\leq
k_2+k_3}(P_{k_2}LQ_{\leq k_2+k_3+39}v\cdot P_{k_3}w)]}_{L^{4/3}_{t,x}}\\
\les&\sum_{k_i}2^{k_4}2^{k_1}\norm{P_{k_1}u}_{L_t^\infty
L_x^2}\norm{P_{k_2}Lv}_{L^2_{t,x}}\norm{P_{k_3}w}_{L^{4}_{t,x}}\\
\les&\norm{u}_{Y^{1}}\norm{v}_{Z^{1}}\norm{w}_{Z^{1}}.
\end{align*}
For the term $I_{22}$, we may assume  $w$ has modulation $\les 2^{k_2+k_3}$. Then we get
\begin{align*}
I_{22}\les&\sum_{k_i}2^{k_4}2^{-k_4/2}\norm{P_{k_4}[P_{k_1}u\cdot
Q_{\leq k_2+k_3}(P_{k_2}Q_{\leq k_2+k_3+39}v\cdot
P_{k_3}Q_{\leq k_2+k_3+100}Lw)]}_{L^{1,2}_{\ve e}}\\
\les&\sum_{k_i}2^{k_4}2^{-k_4/2}2^{k_1}\norm{P_{k_1}u}_{L_t^\infty
L_x^2}\norm{P_{k_2}v}_{L^{2,\infty}_{\ve e}}\norm{
P_{k_3}Q_{\leq k_2+k_3+100}Lw)]}_{L^{2}_{t,x}}\\
\les&\sum_{k_i}2^{k_4}2^{-k_4/2}2^{k_1}2^{k_2/2}2^{(k_2+k_3)/2}\norm{P_{k_1}u}_{Y_{k_1}}\norm{P_{k_2}v}_{F_{k_2}}\norm{P_{k_3}w}_{X^{0,1/2,\infty}}\\
\les&\norm{u}_{Y^{1}}\norm{v}_{Z^{1}}\norm{w}_{Z^{1}}.
\end{align*}
Next we estimate the term $I_{23}$. We have
\begin{align*}
I_{23}\les& \sum_{k_i}2^{k_4}\norm{P_{k_4}[P_{k_1}u\cdot
Q_{[k_1+k_4+100, k_2+k_3]}L(P_{k_2}Q_{\leq k_2+k_3+39}v\cdot
P_{k_3}w)]}_{N_{k_4}} \\
&+ \sum_{k_i}2^{k_4}\norm{P_{k_4}[P_{k_1}u\cdot Q_{\leq
k_1+k_4+99}L(P_{k_2}Q_{\leq k_2+k_3+39}v\cdot
P_{k_3}w)]}_{N_{k_4}} \\
:=&I_{231}+I_{232}.
\end{align*}
For the term $I_{232}$ we have
\begin{align*}
I_{232}\les& \sum_{k_i}2^{k_4}2^{-k_4/2}\norm{P_{k_4}[P_{k_1}u\cdot
Q_{\leq k_1+k_4+99}L(P_{k_2}Q_{\leq k_2+k_3+39}v\cdot
P_{k_3}w)]}_{L^{1,2}_{\ve e}} \\
\les & \sum_{k_i}2^{k_4}2^{-k_4/2}\norm{P_{k_1}u}_{L^{2,\infty}_{\ve e}}2^{k_1+k_4}2^{(k_2-k_3)/2}\norm{P_{k_2}v}_{F_{k_2}}\norm{P_{k_3}w}_{F_{k_3}}\\
\les&\norm{u}_{Y^{1}}\norm{v}_{Z^{1}}\norm{w}_{Z^{1}}.
\end{align*}
For the term $I_{231}$ we have
\begin{align*}
I_{231}\les&
\sum_{k_i}\sum_{j_2=k_1+k_4+100}^{k_2+k_3}2^{k_4}\norm{P_{k_4}Q_{\leq
j_2-10}[P_{k_1}u\cdot Q_{j_2}L(P_{k_2}Q_{\leq k_2+k_3+39}v\cdot
P_{k_3}w)]}_{N_{k_4}}\\
&+\sum_{k_i}\sum_{j_2=k_1+k_4+100}^{k_2+k_3}2^{k_4}\norm{P_{k_4}Q_{\geq
j_2-9}[P_{k_1}u\cdot Q_{j_2}L(P_{k_2}Q_{\leq k_2+k_3+39}v\cdot
P_{k_3}w)]}_{N_{k_4}}\\
:=&I_{2311}+I_{2312}.
\end{align*}
For the term $I_{2312}$ we have
\begin{align*}
I_{2312}\les&
\sum_{k_i}\sum_{j_2=k_1+k_4+100}^{k_2+k_3}\sum_{j_3\geq
k_2-9}2^{k_4}2^{-j_3/2}\norm{P_{k_4}Q_{j_3}[P_{k_1}u\cdot
Q_{j_2}L(P_{k_2}Q_{\leq k_2+k_3+39}v\cdot
P_{k_3}w)]}_{L^2_{t,x}}\\
\les& \sum_{k_i}2^{k_4}2^{k_1}2^{(k_2+k_3)/2}2^{(k_2-k_3)/2}\norm{P_{k_1}u}_{Y_{k_1}}\norm{P_{k_2}v}_{F_{k_2}}\norm{P_{k_3}w}_{F_{k_3}}\\
\les&\norm{u}_{Y^{1}}\norm{v}_{Z^{1}}\norm{w}_{Z^{1}}.
\end{align*}
For the term $I_{2311}$ we have
\begin{align*}
I_{2311}\les&
\sum_{k_i}\sum_{j_2=k_1+k_4+100}^{k_2+k_3}2^{k_4}\norm{P_{k_4}Q_{\leq
j_2-10}[P_{k_1}\tilde Q_{j_2}u\cdot Q_{j_2}L(P_{k_2}Q_{\leq
k_2+k_3+39}v\cdot
P_{k_3}w)]}_{L_t^1L_x^2}\\
\les&\sum_{k_i}\sum_{j_2=k_1+k_4+100}^{k_2+k_3}2^{k_4}2^{k_1}\norm{P_{k_1}\tilde Q_{j_2}u}_{L^2_{t,x}}2^{j_2}2^{(k_2-k_3)/2}\norm{P_{k_2}v}_{F_{k_2}}\norm{P_{k_3}w}_{F_{k_3}}\\
\les&\norm{u}_{Y^{1}}\norm{v}_{Z^{1}}\norm{w}_{Z^{1}}.
\end{align*}
Therefore, we complete the proof.
\end{proof}

\subsection*{Acknowledgment}
The author would like to thank Victor Lie for a helpful discussion
on the maximal function estimate, and Ioan Bejenaru on the null form.
This work is supported in part by
NNSF of China (No.11371037), Beijing
Higher Education Young Elite Teacher Project (No.\ YETP0002), Fok
Ying Tong education foundation (No.\ 141003), and ARC Discovery grants number DP130101302.

\end{document}